\documentclass[10pt, letterpaper]{article}
\usepackage[utf8]{inputenc}

\usepackage{amsmath,amsthm,color}
\usepackage{amssymb}  
\usepackage{amsfonts}
\usepackage{graphicx}
\usepackage{dsfont}
\usepackage{psfrag}
\usepackage{subfigure,epstopdf}
\usepackage{multirow}
\usepackage{float}
\usepackage{cite}
\usepackage{bbm}
\usepackage{savesym}
\usepackage{float}
\usepackage{etoolbox}
\usepackage[ruled,vlined]{algorithm2e}
\usepackage{algorithmic}
\usepackage{tikz}
\usetikzlibrary{positioning,chains,fit,shapes,calc}
\usetikzlibrary{backgrounds}
\usetikzlibrary{arrows.meta}
\usetikzlibrary{shapes,snakes}
\usepackage{tabularx}
\usepackage[font=small]{caption}

\definecolor{ao}{rgb}{0.55, 0.71, 0.0}
\definecolor{bleudefrance}{rgb}{0.19, 0.55, 0.91}
\definecolor{dimgray}{rgb}{0.41, 0.41, 0.41}    
\definecolor{mediumorchid}{rgb}{0.73, 0.33, 0.83}
\definecolor{mediumtealblue}{rgb}{0.0, 0.33, 0.71}
\definecolor{harvestgold}{rgb}{0.85, 0.57, 0.0}
\definecolor{blue(pigment)}{rgb}{0.2, 0.2, 0.6}
\definecolor{forestgreen(traditional)}{rgb}{0.27, 0.35, 0.27}
\definecolor{cadmiumred}{rgb}{0.89, 0.0, 0.13}
\definecolor{orange(webcolor)}{rgb}{1.0, 0.5, 0.0}

\newtheorem{theorem}{Theorem}
\newtheorem{corollary}{Corollary}

\newtheorem{lemma}{Lemma}
\newtheorem{definition}{Definition}

\setlength{\parskip}{0cm}
\setlength{\parindent}{1em}

\title{A scalable distributed dynamical systems approach to compute the strongly connected components and diameter of networks}
\date{\today}

\author{$\text{Emily A. Reed}^{*}$,
$\text{Guilherme Ramos}$\footnote{Both authors contributed equally.
{\scriptsize E. Reed is with the Ming Hsieh Electrical and Computer Engineering Department at the University of Southern California, USA. G. Ramos is with the Department of Electrical and Computer Engineering, Faculty of Engineering, University of Porto, Portugal. P. Bogdan is a faculty member at the University of Southern California in the Ming Hsieh Electrical and Computer Engineering Department. S. Pequito is a faculty member at the Delft University of Technology in the Delft Center for Systems and Control.
 This work was supported in part by FCT project POCI-01-0145-FEDER-031411-HARMONY, National Science Foundation GRFP DGE-1842487, Career Award CPS/CNS-1453860, CCF-1837131, MCB-1936775, CNS-1932620, CMMI-1936624, CMMI 1936578, the University of Southern California Annenberg Fellowship, USC WiSE Top-Off Fellowship, the DARPA Young Faculty Award and DARPA Director Award N66001-17-1-4044. The views, opinions, and/or findings contained in this article are those of the authors and should not be interpreted as representing the official views or policies, either expressed or implied by the Defense Advanced Research Projects Agency, the Department of Defense or the National Science Foundation.}}, Paul Bogdan, S\'ergio Pequito}


\begin{document}
\maketitle

{\centering\footnotesize \emph{\textbf{Dedicated to Professor Cristina and Amilcar Sernadas.}}\par}

\begin{abstract}
Finding strongly connected components (SCCs) and the diameter of a directed network play a key role in a variety of discrete optimization problems, and subsequently, machine learning and control theory problems. On the one hand, SCCs are used in solving the 2-satisfiability problem, which has applications in clustering, scheduling, and visualization. On the other hand, the diameter has applications in network learning and discovery problems enabling efficient internet routing and searches, as well as identifying faults in the power grid. 

In this paper, we leverage consensus-based principles to find the SCCs in a scalable and distributed fashion with a computational complexity of $\mathcal{O}\left(Dd_{\text{in-degree}}^{\max}\right)$, where $D$ is the (finite) diameter of the network and $d_{\text{in-degree}}^{\max}$ is the maximum in-degree of the network. Additionally, we prove that our algorithm terminates in $D+1$ iterations, which allows us to retrieve the diameter of the network. We illustrate the performance of our algorithm on several random networks, including Erdős-Rényi, Barabási-Albert, and \mbox{Watts-Strogatz} networks. 
\end{abstract}


%

\section{Introduction}\label{sec:intro}

Strongly connected components (SCCs) are important in solving problems in clustering, scheduling, and visualization \cite{ramnath2004dynamic,even1975complexity,poon1998polynomial}, as well as in the context of control theory, including structural systems \cite{ramos2020structural} and distributed control \cite{bullo2009distributed}. The diameter is important in improving internet search engines  \cite{albert1999diameter}, quantifying the multifractal geometry of complex networks \cite{xue2017reliable}, and identifying faults in both the power grid \cite{zongxiang2004cascading} and multiprocessor systems \cite{kuhl1980distributed}. 

Nowadays, the networks associated with data are becoming increasingly larger, which demands \emph{scalable} and \emph{distributed} algorithms that enable an efficient determination of both the SCCs and diameter of such networks.

Identifying the different SCCs in a directed network (directed \mbox{graph -- digraph} for short) leads to a unique decomposition of the digraph $\mathcal G=(\mathcal V, \mathcal E)$, where $\mathcal V$ denotes the nodes and $\mathcal E$ the set of directed edges. We may find this decomposition, for instance, using the classic algorithm by Tarjan~\cite{tarjan1972depth}, which employs a single pass of depth-first search and whose computational complexity is $\mathcal{O}(|\mathcal V|+|\mathcal E|)$. It is worth mentioning that depending on the network sparsity, the effective computational complexity is $\mathcal O(|\mathcal V|^2)$, since $\mathcal E\subset \left( \mathcal V \times \mathcal V\right )$. Similar to Tarjan's algorithm, Dijkstra introduced the path-based algorithm to find strongly connected components and also runs in linear time (i.e., $\mathcal{O}(|\mathcal V|+|\mathcal E|)$) \cite{dijkstra1976discipline}. Finally, Kosaraju's algorithm uses two passes of depth-first search but is also upper-bounded by $\mathcal{O}(|\mathcal V|+|\mathcal E|)$ \cite{sharir1981strong}. 

Most of the newly proposed algorithms for finding the SCCs have similar computational complexity~\cite{hsu2017comparative}. A possible alternative is to develop better data structure algorithms that are suitable for parallelization, which can then lead to implementations with computational complexity equal to $\mathcal O(|\mathcal V| \log \left ( |\mathcal V|\right ))$~\cite{fleischer2000identifying} -- see also~\cite{barnat2011distributed} for an overview of different parallelized algorithms for SCC decomposition. 

The above-mentioned  solutions require knowledge of the overall structure of the system digraph, which may not be suitable for neither control systems nor for large-scale applications in machine learning, including social networks. Subsequently, we propose a scalable distributed algorithm to determine the SCCs that relies solely on control systems tools, specifically max-consensus-like dynamics. Furthermore, our algorithm converges in $D+1$ iterations and thereby enables us to determine the diameter $D$ of the network. State-of-the-art methods to determine the diameter of a directed network include the Floyd-Warshall algorithm, which has a complexity of $\mathcal{O}(|V|^3)$~\cite{floydwarshall}.

\textbf{Main contributions:}  

\begin{itemize}
    \item Provide a scalable distributed algorithm to find the strongly connected components of a directed graph with computational time-complexity~$\mathcal{O}\left(Dd_{\text{in-degree}}^{\max}\right)$; 
    \item Determine the finite diameter of a directed graph with computational time-complexity $\mathcal{O}\left(Dd_{\text{in-degree}}^{\max}+|\mathcal{V}|\right)$;
    \item Provide numerical evidence of the performance of our algorithm on random networks including Erdős-Rényi, Barabási-Albert, and Watts-Strogatz.
\end{itemize}
 
\subsection{Preliminaries and Terminology}\label{sec:notation}
Consider a directed graph (digraph) $\mathcal{G}=(\mathcal{V},\mathcal{E})$ where $\mathcal{V}$ is the set of vertices with $|\mathcal{V}| = N$, and $\mathcal{E}\subset \mathcal V\times \mathcal V$ is the set of edges, where the maximum number of edges is $|\mathcal{E}| = |\mathcal{V}\times \mathcal{V}| = N^2$. 
Given $\mathcal{G}=(\mathcal{V},\mathcal{E})$, the \textit{in-degree} of a vertex $v\in\mathcal V$ is $d_{\text{in-degree}}(v)=|\{(u,v)\,:\,(u,v)\in\mathcal E\}|$, and we denote the maximum in-degree of $\mathcal G$ by $d_{\text{in-degree}}^{\max}=\displaystyle\mathop{\max}_{v\in\mathcal V}d_{\text{in-degree}}(v)$. 
Moreover, given a vertex $v\in\mathcal V$, we define the set of its \textit{in-neighbors} as $\mathcal N_{v}^-=\{u\,:\,(u,v)\in\mathcal E\}$. 

A \textit{walk} in a digraph is any sequence of edges where the last vertex in one edge is the beginning of the next edge, except for the beginning vertex of the first edge and the ending vertex of the last edge. 
Notice that a walk does not exclude the repetition of vertices. 
In contrast, a \textit{path}, is a walk where the same vertex is not the beginning or ending of two different edges in the sequence. The size of the path is the number of edges that constitute it.  
If the beginning and ending vertex of a path is the same, then we obtain a \textit{cycle}.
Additionally, a \textit{sub-digraph} $\mathcal G_s=(\mathcal V',\mathcal E')$ is described as any sub-collection of vertices $\mathcal V'\subset \mathcal V$ and the edges $\mathcal E'\subset \mathcal{V}'\times\mathcal{V}'$ between them. 
If a subgraph has the property that there exists a path between any two pairs of vertices, then it is a \textit{strongly connected (di)graph}. 
The maximal strongly connected subgraph forms a \emph{strongly connected component (SCC)}, and any digraph can be uniquely decomposed into SCCs. A digraph $\mathcal{G}' = (\mathcal{V}',\mathcal{E}')$ is said to \textit{span} $\mathcal{G} = (\mathcal{V},\mathcal{E})$, denoted by $\mathcal{G}' = \text{span}(\mathcal{G})$, if $\mathcal{V}'=\mathcal{V}$ and $\mathcal{E}' \subseteq \mathcal{E}$.

Finally, given a digraph $\mathcal G=(\mathcal V,\mathcal E)$, we define its \emph{finite digraph diameter} $D$ as the size of the longest shortest path between any pair of vertices in $\mathcal V$, for the pairs such that such a path exists.  


\section{Problem Statement}\label{sec:prob}

We propose to address the following two problems.

\vspace{0.2cm}
$(\mathbf{P}_1)$: Given a digraph $\mathcal G=(\mathcal V,\mathcal E)$,  determine the unique decomposition of $m\in \mathbb{N}$ strongly connected components by finding the maximal subgraphs $\mathcal G_s = (\mathcal V_s,\mathcal E_s), s=1,\cdots,m$, where each subgraph is a SCC such that $\mathcal V_s\cap \mathcal V_q= \emptyset$ for $s\neq q$ with $q=1,\ldots,m$, $\mathcal V_s,\mathcal V_q\subset\mathcal V$, $\mathcal E_s\subset (\mathcal E\cap  (\mathcal{V}_s\times \mathcal{V}_s))$, and $\displaystyle\bigcup_{s=1}^m \mathcal G_s\equiv \left(\cup_{s=1}^m \mathcal{V}_s,\cup_{s=1}^m \mathcal{E}_s\right)=\text{span}(\mathcal G)$. 
\vspace{0.3cm}

$(\mathbf{P}_2)$: Given a digraph $\mathcal G=(\mathcal V,\mathcal E)$, determine the finite digraph diameter $D$. 
\vspace{0.2cm}

Next, we provide the solution to the above problems in both a centralized and distributed fashion that enables a scalable approach to determine the different SCCs and the finite digraph diameter of a given network. Notice that a graph has a unique decomposition into $m$ SCCs, but we do not require a priori the knowledge of such number.

\section{A scalable distributed dynamical systems approach to compute the strongly connected components and finite diameter of networks}

To determine a solution to ($\mathbf{P}_1$) and ($\mathbf{P}_2$), we leverage a \mbox{max-consensus-like} protocol. 

\begin{definition}\label{def:max_consensus}\cite{cortes2008distributed} 
Consider $\mathcal{G}=(\mathcal{V},\mathcal{E})$, where each vertex $v_i\in\mathcal{V},i=1,\ldots,N$, has an associated state $y_i[k]\in\mathbb{R}$ at any time $k\in\mathbb{N}$. Then, we have the following \emph{max-consensus-like} update rule 
\begin{equation}\label{eq:max_consensus}
    y_{i}[k+1] = \max\limits_{v_j\in \mathcal{N}_{v_i}^{-}\cup \{v_i\}} y_{j}[k],
\end{equation}
for each node $v_i$, where $\mathcal{N}_{v_i}^{-}$ denotes all of the nodes $v_j$ such that there is an edge $(v_j,v_i)\in\mathcal{E}$. We simply say that \textit{consensus} is achieved if there exists an instance of time $h$ such that for all $h'\geq h$,   $y_{i}[h']=y_{j}[h']$, for all $v_i,v_j\in\mathcal{V},i=\{1,\dots,N\}, j=\{1,\dots,N\}$ and for all initial conditions $y[0]=[y_{1}[0]^{\intercal} \dots y_{n}[0]^{\intercal}]^{\intercal}$.  \hfill $\circ$
\end{definition}

Definition~\ref{def:max_consensus} is similar to the max-consensus update, but in addition to considering the information from the neighbors, Definition~\ref{def:max_consensus} also considers the information from the node itself. Furthermore, it is worth emphasizing that from Definition~\ref{def:max_consensus} it follows that every node only needs be able to receive information from its in-neighbors, (i.e., the nodes connected to it). Hence, each node only needs the local information, which is pertinent to distributed algorithms.

Next, we present Algorithm \ref{algorithm4}, which can be used in finding the solutions to ($\mathbf{P}_1$) as well as ($\mathbf{P}_2$).

\begin{algorithm}[thpb]
\SetAlgoLined
\SetKwInput{KwData}{Input} 
\KwData{ $\mathcal{N}_{v_i}^{-}$, which is the set of in-neighbors of node $v_i$}
\SetKwInput{KwResult}{Output}
 \KwResult{Each node $v_i$ obtains a set  $\mathcal{S}_{i}^{*}$, which contains the nodes belonging to the same SCC as node $v_i$, and a scalar $k_i$, which is the one more than the number of iterations} 
\textbf{Initialization:}
Set $\mathcal S_{i}^*=\emptyset$, $k_i=0$; $x_{i}[0]=\{i\}$; $y_{i}[0]=1$; $z_{i}[0]=\{\};$ and  $w_{i}[0]=\text{\textsc{False}}$; \\
\While{$w_i[k_i]==\text{\textsc{False}}$}{
\textbf{Step 1:} $$x_i[k_i+1]=\displaystyle\bigcup_{v_j\in\mathcal N^-_{v_i}\cup\{v_i\}}x_{j}[k_i]$$
\textbf{Step 2:} $$y_i[k_i+1]=\displaystyle\mathop{\max}\{\displaystyle\mathop{\max}_{v_j\in\mathcal N^-_{v_i}}|x_{j}[k_i]|,|x_{i}[k_i+1]|\}$$
\textbf{Step 3:} $$z_i[k_i+1]=\left\{v_j\,:\,y_i[k_i+1]=y_j[k_i]\land  v_j\in \hspace{-4mm}\bigcup_{v_l\in\mathcal N^-_{v_i}\cup\{v_i\}}\hspace{-4mm}x_l[k_i]\right\}$$
\textbf{Step 4:} $$w_i[k_i+1]=\left(y_i[k_i+1]==y_i[k_i]\right)$$
\textbf{Step 5:} $k_i=k_i+1$
}
\textbf{Step 6:} 
Set $S^\ast_{i}=z_i[k_i]$


 \caption{Find the SCCs distributively }\label{algorithm4}
\end{algorithm}

Algorithm \ref{algorithm4} is performed on each node $v_i$ and obtains a set $\mathcal{S}_{i}^{*}$, which consists of the nodes that belong to the same SCC as node $v_i$, and a scalar $k_i$, which is one more than the number of iterations. 

Briefly speaking, Algorithm \ref{algorithm4} works as follows. For each node $v_i$, we first find the set of nodes that have a directed path ending in node $v_i$. 
Next, we compare the size of this set with the size of the sets of their neighboring nodes that are connected to node $v_i$.
Finally, we add the nodes contained in the same SCC as node $v_i$ to the set $ \mathcal  S_i^{*}$.

More specifically, Algorithm~\ref{algorithm4} starts by initializing the local (i.e., at node $v_i$) sets and parameters for the algorithm. 
At each iteration of the algorithm, Step 1 finds the set of state `ids' (or, equivalently, nodes' indices) that form directed paths that end in node $v_i$. Step 2 records the maximum size of the sets of directed paths to node $v_i$. Step 3 determines the nodes that are contained in the same SCC as node $v_i$. In Step 4, if the maximum size of the set of directed paths to $v_i$ has been obtained, then an indication to end the algorithm for node~$v_i$ is provided. Step 5 tracks the iterations, which is important for finding the finite digraph diameter. The algorithm terminates when no new information is received. Lastly, Step 6 sets $\mathcal S_i^{*}$, which is the set of nodes that are contained in the same SCC as node $v_i$. 

The following lemma will be important in proving the correctness of Algorithm~\ref{algorithm4}.
\begin{lemma}\label{lemma:findingSCCs}
If, for any two nodes $v_i$ and $v_j$, we have that $y_i[k_i+1]=y_j[k_i]$ and $v_j\in\displaystyle\mathop{\bigcup}_{v_l\in\mathcal{N}_{v_i}^{-}\cup\{v_i\}} x_{l}[k_i]$, then $v_i$ and $v_j$ are in the same SCC. 
\end{lemma}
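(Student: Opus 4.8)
The plan is to first pin down the semantics of the quantities $x_i[k]$ and $y_i[k]$ produced by Algorithm~\ref{algorithm4}. By induction on $k$, using Step 1 together with the fact that the union always contains $x_i[k]$ itself (so the sequence is monotone nondecreasing), I would show that $x_i[k]=\{v_j : \text{there is a directed walk from } v_j \text{ to } v_i \text{ of length at most } k\}$; that is, $x_i[k]$ is the set of ancestors of $v_i$ reachable within $k$ hops. Writing $A(v_i)$ for the full ancestor set $\{v_j : \exists \text{ directed path } v_j\to v_i\}$, monotonicity gives $x_i[k]\subseteq x_i[k+1]\subseteq A(v_i)$ with $x_i[k]=A(v_i)$ for $k$ large enough. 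Because $x_i[k+1]\supseteq x_l[k]$ for every in-neighbor $v_l$, Step 2 simplifies to $y_i[k]=|x_i[k]|$, so each $y$-value is exactly the cardinality of the corresponding ancestor set.

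With this dictionary the two hypotheses read cleanly. The membership $v_j\in \bigcup_{v_l\in\mathcal N^-_{v_i}\cup\{v_i\}}x_l[k_i]=x_i[k_i+1]$ says precisely that there is a directed path $v_j\to v_i$, hence $A(v_j)\subseteq A(v_i)$. It remains to produce the reverse path $v_i\to v_j$, and this is where the cardinality equation enters. I would run the squeeze $|x_i[k_i+1]|=y_i[k_i+1]=y_j[k_i]=|x_j[k_i]|\le |A(v_j)|\le |A(v_i)|$, the last inequality being $A(v_j)\subseteq A(v_i)$. If the left-hand side equals $|A(v_i)|$, every inequality is forced to be an equality, so $A(v_j)=A(v_i)$ and moreover $x_j[k_i]=A(v_j)$. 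Then $v_i\in A(v_i)=A(v_j)=x_j[k_i]$ yields a directed path $v_i\to v_j$, which together with $v_j\to v_i$ places $v_i$ and $v_j$ in the same SCC. Note that no separate convergence assumption on $x_j$ is needed: the chain of inequalities pins $x_j[k_i]$ to $A(v_j)$ automatically.

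The one nonroutine ingredient, and the step I expect to be the real obstacle, is justifying $|x_i[k_i+1]|=|A(v_i)|$, i.e.\ that the ancestor set of $v_i$ has already converged at the iteration where the test is applied. This does not follow from the size equation alone: a short directed chain (say $v_a,v_b\to v_j\to v_m\to v_i$) shows that \emph{before} convergence $y_i[k_i+1]$ can coincidentally equal $y_j[k_i]$ while $v_i$ and $v_j$ lie in distinct SCCs, so the condition must be read at the terminating iteration used in Step 6. I would therefore tie the hypothesis to the stopping rule of Algorithm~\ref{algorithm4}: the termination flag fires when $y_i[k_i+1]=y_i[k_i]$, i.e.\ $x_i[k_i+1]=x_i[k_i]$, and a shortest-path frontier argument shows that if no ancestor appears at distance exactly $k_i+1$ then none appears at any larger distance (the second vertex of any longer shortest path would itself sit at distance $k_i+1$). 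Hence local stabilization forces $x_i[k_i+1]=A(v_i)$, and feeding this into the squeeze above closes the proof.
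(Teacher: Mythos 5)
Your proposal is correct and, in substance, a repaired version of the paper's proof rather than a reproduction of it. The paper argues by contradiction: from $v_j\in\bigcup_{v_l\in\mathcal{N}_{v_i}^{-}\cup\{v_i\}}x_l[k_i]=x_i[k_i+1]$ it extracts the path $v_j\to v_i$ exactly as you do, but then it simply asserts that $y_i[k_i+1]=y_j[k_i]$ forces a path $v_i\to v_j$ ``or we would have $y_i[k_i+1]>y_j[k_i]$,'' with no appeal to termination or to the semantics of $x$ and $y$. Your dictionary ($x_i[k]$ is the set of $\le k$-hop ancestors, $y_i[k]=|x_i[k]|$) together with the squeeze $|x_i[k_i+1]|=|x_j[k_i]|\le|A(v_j)|\le|A(v_i)|$ is precisely the missing justification, and it exposes that the implication only holds once $|x_i[k_i+1]|=|A(v_i)|$. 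Your caveat is a genuine catch, not pedantry: read at an arbitrary iteration the lemma is false, your chain counterexample is valid, and the paper's own trace exhibits the failure --- in Table \ref{tab:example2}, at $k=3$ node $v_6$ has $z_6[3]=\{3,5\}$ even though $3$ and $6$ lie in the distinct SCCs $\{3,4\}$ and $\{5,6\}$ (the algorithm survives only because Step 6 reads $z_i$ at termination). Your closing frontier argument is also sound: $y_i[k_i+1]=y_i[k_i]$ gives $x_i[k_i+1]=x_i[k_i]$ by monotonicity of the nested sets, and any ancestor at shortest distance greater than $k_i$ would place some vertex at distance exactly $k_i+1$ in $x_i[k_i+1]\setminus x_i[k_i]$; this is essentially the rigorous content of the paper's Lemma \ref{lemma:stopping_criteria}. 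In short, the paper's proof is shorter but has a real gap at its pivotal step; yours closes that gap at the price of an explicit convergence hypothesis, which is exactly the regime in which the lemma is actually invoked in the proof of Theorem \ref{theorem:alg}.
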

\begin{proof}
Suppose for a contradiction that $y_i[k_i+1] = y_j[k_i]$ and $v_j\in\displaystyle\mathop{\bigcup}_{v_l\in\mathcal{N}_{v_i}^{-}\cup\{v_i\}} x_{l}[k_i]$, but $v_i$ and $v_j$ are not in the same SCC. This would mean that there is neither a direct path from $v_i$ to $v_j$ nor from $v_j$ to $v_i$. However, if $v_j\in\cup_{v_l\in\mathcal{N}_{i}^{-}\cup\{i\}} x_{l}[k_i]$, then $v_j$ can reach node $v_i$, so there is a direct path from $v_j$ to $v_i$. Furthermore, if $y_i[k_i+1]=y_j[k_i]$, then there must also be a direct path from $v_i$ to $v_j$ or we would have that $y_i[k_i+1]>y_j[k_i]$. Therefore, there is a direct path from $v_i$ to $v_j$ and from $v_j$ to $v_i$, so $v_i$ and $v_j$ must be in the same SCC.
\end{proof}

The next lemma is important in providing a stopping criteria for the algorithm. 
\begin{lemma}\label{lemma:stopping_criteria}
If $y_{i}[k_i+1] = y_i[k_i]$, for all $i=1,\dots,N$, then all of the SCCs have been found.
\end{lemma}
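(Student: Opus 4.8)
The plan is to first pin down what the algorithm's variables mean, then to show that the local stopping test $y_i[k_i+1]=y_i[k_i]$ already certifies that $v_i$ has seen \emph{all} of its ancestors, and finally to read off the SCC of each node from Step~3. By induction on the iteration index, using Step~1 together with $v_i\in\mathcal N_{v_i}^-\cup\{v_i\}$, one verifies that $x_i[k]$ is exactly the set of nodes $v_m$ admitting a walk of length at most $k$ ending at $v_i$, and that $k\mapsto x_i[k]$ is nondecreasing. Since $x_i[k+1]\supseteq x_j[k]$ for every in-neighbor $v_j$, the inner maximum in Step~2 is always dominated by $|x_i[k+1]|$, so $y_i[k]=|x_i[k]|$ for all $k$. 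Let $R_i$ denote the eventual set of all nodes that can reach $v_i$.

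The crux is to show that $y_i[k_i+1]=y_i[k_i]$ forces $x_i[k_i]=R_i$. Because the sets are nested, equality of cardinalities yields $x_i[k_i+1]=x_i[k_i]$, i.e. no node has shortest-walk distance to $v_i$ equal to exactly $k_i+1$. I would then invoke gaplessness of the distance profile: if some node were at distance at least $k_i+1$ from $v_i$, then a vertex on a shortest walk from it to $v_i$ would sit at distance exactly $k_i+1$, a contradiction. Hence every ancestor of $v_i$ lies within distance $k_i$, so $x_i[k_i]=R_i$ and $y_i[k_i]=|R_i|$.

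With every $x_i$ equal to $R_i$ and every $y_i$ equal to $|R_i|$, I would read off the SCCs in two directions. For soundness, any $v_j$ appearing in the Step~3 set for $v_i$ satisfies precisely the hypotheses of Lemma~\ref{lemma:findingSCCs}, hence lies in the same SCC as $v_i$. For completeness, if $v_j$ is in the SCC of $v_i$, then every node reaching $v_i$ also reaches $v_j$ and conversely, so $R_i=R_j$; consequently $|R_i|=|R_j|$ and $v_j\in R_i$, which are exactly the two conditions of Step~3, so $v_j$ is collected into $z_i$. Thus $\mathcal S_i^{*}=\mathrm{SCC}(v_i)$ for every $i$, which is precisely the claim that all SCCs have been found.

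I expect the gaplessness step to be the main obstacle: a priori one might fear that a single stalled update $y_i[k_i+1]=y_i[k_i]$ precedes a later resumption of growth in $x_i$ (driven by an in-neighbor whose set is still expanding), and the proof must rule this out. The shortest-walk substructure is exactly what forbids a gap in the distance profile and so legitimizes $y$-stabilization as a purely local stopping rule. A secondary bookkeeping subtlety is that Step~3 reads neighbor values $y_j$ at $v_i$'s iteration index rather than at $v_j$'s own termination; under the lemma's hypothesis that every node has stabilized, the variables $x_i$ and $y_i$ are constant thereafter, so evaluating Step~3 at any common post-stabilization time is harmless, which I would state explicitly.
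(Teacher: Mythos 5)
Your proof is correct, and it takes a genuinely sharper route than the paper's. The paper argues globally and informally: $x_i$ is monotone, and if $y_i$ stalls \emph{for all} nodes simultaneously then ``the network cannot communicate any new information,'' after which the $z_i$ are simply asserted to capture the SCCs. You instead establish the exact invariants ($x_i[k]$ is the set of nodes within distance $k$ of $v_i$, and $y_i[k]=|x_i[k]|$ since the inner maximum in Step~2 is dominated) and then make the crucial step \emph{local}: a single stall $x_i[k+1]=x_i[k]$ already forces $x_i[k]=R_i$, via the gapless-distance-profile argument (an ancestor at distance greater than $k$ would, by subpath optimality of shortest paths, place some vertex at distance exactly $k+1$). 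This is precisely the point the paper glosses over --- its claim that a stalled $y_i$ precludes any later growth of $x_i$ is exactly your gaplessness step, asserted there without proof --- and your local version is what actually legitimizes a per-node stopping rule, whereas the paper's reasoning needs all nodes to stall at once. You also supply the completeness half ($v_j\in\mathrm{SCC}(v_i)\Rightarrow R_i=R_j\Rightarrow v_j$ satisfies both Step~3 conditions), which the paper's proof omits entirely, asserting only that ``$z_i$ captures the SCC.'' Two small remarks: (i) for soundness you could bypass Lemma~\ref{lemma:findingSCCs} (whose own proof is only valid post-stabilization) with a one-line nesting argument: $v_j\in R_i$ gives $R_j\subseteq R_i$, so $|R_i|=|R_j|$ forces $R_j=R_i\ni v_i$, i.e.\ $v_i$ reaches $v_j$; (ii) the bookkeeping subtlety you flag --- that Step~3 must be evaluated at a common post-stabilization time --- is genuinely needed and not pedantry: in the paper's own Table~\ref{tab:example2}, node $v_5$ satisfies its stopping test at $k=5$ with $z_5[5]=\{5\}$, and the correct $\{5,6\}$ only appears at $k=6$, so stating this explicitly, as you propose, closes a real gap between the lemma's ``for all $i$'' hypothesis and the per-node loop in Algorithm~\ref{algorithm4}.
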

\begin{proof}
At each iteration of the algorithm, the number of elements in set $x_i$ either increases or it remains the same. If from one iteration to the next, the number of elements in $x_i$ stays the same for all nodes $v_i$, then every node has received all of the information that it possibly can. Furthermore, each node knows the other nodes that can reach it as captured by the set $x_i$. Since $y_i$ finds the maximum set of nodes that can reach node $v_i$ by comparing the size of set $x_j$ among all of  neighbors and itself, then if $y_i$ remains the same from one iteration to the next, then it is clear that the number of elements in $x_i$ also remains the same. Hence, the network cannot communicate any new information. This means that the set $z_i$ will also not change on the next iteration, and since $z_i$ captures the SCC containing $v_i$, then all of the SCCs must have been found. 
\end{proof}

In the following theorem, we prove the correctness of Algorithm~\ref{algorithm4}. 

\begin{theorem}\label{theorem:alg}
Let $\mathcal S_i^{*}$ be the set of nodes that results after Algorithm~\ref{algorithm4} is executed on node $v_i\in\mathcal{V}$. Then, $\displaystyle\mathop{\bigcup}_{i=\{1,\ldots,N\}}( \mathcal S_i^{*},(\mathcal S_i^{*}\times \mathcal S_i^{*})\cap\mathcal E)$ is a solution to~$\mathbf{P}_1$. \hfill $\circ$
\end{theorem}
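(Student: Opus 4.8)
The plan is to show that, at termination, the set $\mathcal{S}_i^{*}$ produced by Algorithm~\ref{algorithm4} equals \emph{exactly} the vertex set of the SCC containing $v_i$; the decomposition claimed in $\mathbf{P}_1$ then follows from the standard fact that the SCCs of a digraph partition $\mathcal{V}$. Lemma~\ref{lemma:findingSCCs} already supplies one inclusion (soundness): every $v_j \in \mathcal{S}_i^{*} = z_i[k_i]$ lies in the same SCC as $v_i$, so $\mathcal{S}_i^{*} \subseteq C_i$, where $C_i$ denotes the vertex set of the SCC of $v_i$. The substance of the argument is therefore the reverse inclusion (completeness), $C_i \subseteq \mathcal{S}_i^{*}$, together with the assembly of the global partition.

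First I would establish the invariant describing $x_i$. By induction on the iteration count, $x_i[k]$ is the set of indices of vertices that reach $v_i$ along a directed walk of length at most $k$; since the update unions the neighbors' sets, this set is monotone nondecreasing and, by Lemma~\ref{lemma:stopping_criteria}, stabilizes at termination to the full \emph{in-reach set} $R^{-}(v_i) := \{v_j : \text{there is a directed path from } v_j \text{ to } v_i\} \cup \{v_i\}$. Because $x_i[k+1]$ contains every $x_j[k]$ that enters Step~2, one checks that $y_i[k+1] = |x_i[k+1]|$, so at termination $y_i = |R^{-}(v_i)|$ and $\bigcup_{v_l \in \mathcal{N}_{v_i}^{-}\cup\{v_i\}} x_l = R^{-}(v_i)$.

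The key structural observation is then the following. If $v_i$ and $v_j$ lie in the same SCC, any vertex reaching one reaches the other, so $R^{-}(v_i) = R^{-}(v_j)$ and in particular $|R^{-}(v_i)| = |R^{-}(v_j)|$; conversely, if $v_j$ reaches $v_i$ but belongs to a different (hence strictly upstream) SCC, then $R^{-}(v_j) \subsetneq R^{-}(v_i)$, where strictness comes from $v_i \in R^{-}(v_i)\setminus R^{-}(v_j)$ (were $v_i \in R^{-}(v_j)$, the two vertices would lie on a common cycle and share an SCC). Hence among all vertices in $R^{-}(v_i)$, those attaining the maximum in-reach cardinality $|R^{-}(v_i)|$ are precisely the members of $C_i$. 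Applying this to Step~3 at termination, $z_i = \{v_j : y_j = y_i \text{ and } v_j \in R^{-}(v_i)\} = \{v_j : |R^{-}(v_j)| = |R^{-}(v_i)| \text{ and } v_j \text{ reaches } v_i\} = C_i$, which yields completeness and, combined with soundness, the equality $\mathcal{S}_i^{*} = C_i$.

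Finally I would assemble the decomposition: since each $\mathcal{S}_i^{*}$ is the vertex set of an SCC and every vertex lies in its own $\mathcal{S}_i^{*}$, the distinct sets among $\{\mathcal{S}_1^{*}, \dots, \mathcal{S}_N^{*}\}$ are exactly the $m$ SCC vertex sets; these are pairwise disjoint and cover $\mathcal{V}$, while the induced edge sets $(\mathcal{S}_i^{*}\times\mathcal{S}_i^{*})\cap\mathcal{E}$ are contained in $\mathcal{E}$, so their union spans $\mathcal{G}$, verifying every clause of $\mathbf{P}_1$. I expect the main obstacle to be the completeness step: pinning down that the cardinality test $y_j = y_i$ in Step~3 isolates same-SCC vertices rather than merely upstream ancestors requires the strict-inclusion argument above, and some care is needed with the one-step index offset between $y_i[k_i+1]$ and $y_j[k_i]$, which is harmless only because, by Lemma~\ref{lemma:stopping_criteria}, all states have stabilized once the algorithm terminates.
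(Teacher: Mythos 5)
Your proof is correct under the synchronous reading of Algorithm~\ref{algorithm4} that the paper itself adopts (all nodes iterating in lockstep until global quiescence, as in its worked tables), and it takes a genuinely more substantive route than the paper's own proof. The paper's argument is a narrative walkthrough that delegates soundness to Lemma~\ref{lemma:findingSCCs} and completeness to Lemma~\ref{lemma:stopping_criteria}; but Lemma~\ref{lemma:findingSCCs} only yields $\mathcal{S}_i^{*}\subseteq C_i$, and the claim in Lemma~\ref{lemma:stopping_criteria} that ``all of the SCCs have been found'' is asserted rather than derived --- nowhere does the paper verify that every vertex of $C_i$ actually passes the cardinality test of Step~3. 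Your strict-inclusion dichotomy closes exactly that gap: from the identity $y_i[k+1]=|x_i[k+1]|$ and the stabilization $x_i\to R^{-}(v_i)$, same-SCC vertices satisfy $R^{-}(v_j)=R^{-}(v_i)$, while a strictly upstream $v_j$ satisfies $R^{-}(v_j)\subsetneq R^{-}(v_i)$ (strict because $v_i\in R^{-}(v_j)$ would merge the two SCCs), so the maximizers of in-reach cardinality within $R^{-}(v_i)$ are exactly $C_i$, giving the equality $\mathcal{S}_i^{*}=C_i$ that the paper never establishes. Your insistence on evaluating Step~3 only after stabilization is necessary, not merely prudent: at intermediate iterations the test transiently admits upstream vertices (the paper's own Table~\ref{tab:example2} shows $z_6[3]=\{3,5\}$ even though $3$ is not in the SCC of $6$), so Lemma~\ref{lemma:findingSCCs}, read literally at an arbitrary iteration, is false, and your termination-time application is the only sound one. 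The single soft spot is your closing parenthetical that the one-step index offset is harmless ``because, by Lemma~\ref{lemma:stopping_criteria}, all states have stabilized once the algorithm terminates'': that lemma's hypothesis is global, whereas the while loop in Algorithm~\ref{algorithm4} is per node, and a node that exits as soon as its own $y_i$ repeats may evaluate an SCC-mate's $y_j[k_i-1]$ before it has saturated --- for a $2$-cycle $a\leftrightarrow b$ fed by a path $u_1\to\cdots\to u_m\to a$, node $a$ halts one round too early and outputs $\mathcal{S}_a^{*}=\{a\}$ while $\mathcal{S}_b^{*}=\{a,b\}$, breaking the disjointness demanded by $\mathbf{P}_1$. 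Since the paper's proof makes exactly the same leap, this is an ambiguity you inherited from the algorithm's termination semantics rather than a flaw introduced by your argument; under the synchronous semantics the paper depicts, your proof is complete and strictly more rigorous than the original.
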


\begin{proof}
The algorithm iterates until $y_i[k_i+1] = y_i[k_i]$, for all $i=1,\ldots,N$, at which point all of the SCCs have been found--see Lemma~\ref{lemma:stopping_criteria}. At each iteration, Step 1 forms the set of nodes that reach node $v_i$ and is recorded in set $x_{i}$. Step 2 finds the maximum cardinality of this set by comparing the size of $x_i$ among its neighbors and itself. Step 3 finds the set of nodes that are contained in the same SCC as node $v_i$ -- see Lemma~\ref{lemma:findingSCCs} and is recorded in set $z_i$. 
Step 4 determines if the maximum set of nodes that can reach node $v_i$ has been found and serves as a stopping criteria for the algorithm. Step 5 tracks the iterations, and step 6 records the set of nodes that are contained in the same SCC as node $v_i$ in the set $\mathcal S_i^{*}$. Finally, the SCCs are formed in the following subgraphs $\mathcal{G}_{i}^{*}=( \mathcal S_i^{*},(\mathcal S_i^{*}\times \mathcal S_i^{*})\cap\mathcal{E})$ as mentioned in the statement of Theorem \ref{theorem:alg}. Any duplicate subgraphs of SCCs are eliminated by taking the union of all the subgraphs $\displaystyle\bigcup_{i=\{1,\ldots,N\}}\mathcal{G}_{i}^{*}$. Hence, we obtain the SCCs of $\mathcal{G}(\mathcal{V},\mathcal{E})$. 
\end{proof}



Next, we provide the computational time-complexity for Algorithm~\ref{algorithm4}.

\begin{theorem}\label{theorem:complexity}
Algorithm \ref{algorithm4} has computational time-complexity $\mathcal{O}\left(NDd_{\text{in-degree}}^{\max}\right)$, where $N$ is the number of vertices, $D$ is the (finite) diameter of the network (i.e., the longest shortest path) and $d_{\text{in-degree}}^{\max}$ is the maximum in-degree of the network. \hfill $\circ$ 
\end{theorem}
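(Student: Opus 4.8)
The plan is to factor the total running time as (the number of nodes executing the algorithm) $\times$ (the number of while-loop iterations each node performs) $\times$ (the work a node performs in a single iteration), and to bound these three factors separately by $N$, $\mathcal{O}(D)$, and $\mathcal{O}(d_{\text{in-degree}}^{\max})$. The $N$ factor is immediate, since Algorithm~\ref{algorithm4} is run once at each of the $N$ vertices and the theorem measures total work summed over all nodes (the per-node figure being $\mathcal{O}(Dd_{\text{in-degree}}^{\max})$).

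First I would bound the number of iterations of the while loop. The key observation is that the Step~1 recursion $x_i[k_i+1]=\bigcup_{v_j\in\mathcal N^-_{v_i}\cup\{v_i\}}x_j[k_i]$ with $x_i[0]=\{i\}$ is exactly a reachability recursion: by induction on $k$, one shows $x_i[k]$ equals the set of all nodes from which there is a directed walk to $v_i$ of length at most $k$. Since $x_i[k+1]\supseteq x_j[k]$ for every in-neighbor $v_j$, the maximum in Step~2 is always attained by the self term, so $y_i[k]=|x_i[k]|$; hence the sequence $y_i$ is nondecreasing and the stopping test $w_i[k_i+1]=(y_i[k_i+1]==y_i[k_i])$ fires precisely when $x_i$ reaches its fixed point. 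Because the diameter $D$ is the longest shortest path between any pair that is connected, every ancestor of $v_i$ reaches $v_i$ within $D$ hops, so $x_i$ stabilizes after at most $D$ iterations; one further iteration is needed to detect stability, giving at most $D+1=\mathcal{O}(D)$ loop iterations for every node (this also recovers the $D+1$-iteration termination claim, and invokes Lemma~\ref{lemma:stopping_criteria} for the correctness of the stopping criterion).

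Next I would bound the per-iteration, per-node cost by inspecting Steps~1--5. In one iteration node $v_i$ merges the information arriving from its $|\mathcal N^-_{v_i}|\le d_{\text{in-degree}}^{\max}$ in-neighbors: Step~1 unions the incoming state sets, Step~2 maximizes over the same collection of cardinalities, Step~3 scans the in-neighbor contributions to collect matching `ids', and Steps~4--5 are $\mathcal{O}(1)$. Charging one unit of work per received in-neighbor message yields $\mathcal{O}(d_{\text{in-degree}}^{\max})$ per iteration, and multiplying the three factors gives $N\cdot\mathcal{O}(D)\cdot\mathcal{O}(d_{\text{in-degree}}^{\max})=\mathcal{O}(N D d_{\text{in-degree}}^{\max})$. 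The main obstacle is precisely this per-iteration accounting in Steps~1 and~3: the sets $x_j$ may contain up to $N$ `ids', so a literal element-by-element union or membership scan would contribute an extra factor of $N$ and inflate the bound to $\mathcal{O}(N^2 D d_{\text{in-degree}}^{\max})$. Obtaining the stated bound therefore requires committing to the distributed cost model in which the unit of work is one received message per in-neighbor (so merging a neighbor's state counts as $\mathcal{O}(1)$), or an $\mathcal{O}(1)$-per-operation set representation; making this modeling assumption explicit, and verifying that the number of in-neighbor messages dominates every step, is the part of the argument that most needs care, whereas the reachability-based iteration count is the cleaner half.
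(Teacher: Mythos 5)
Your proof is correct and follows essentially the same decomposition as the paper's: $N$ nodes executing the algorithm, a while loop bounded by the diameter $D$ (with one extra iteration to detect stability), and $\mathcal{O}\left(d_{\text{in-degree}}^{\max}\right)$ work per iteration from scanning the in-neighbors, with Steps~4--6 costing $\mathcal{O}(1)$. Your additional care---the reachability induction establishing $y_i[k]=|x_i[k]|$ and hence the validity of the stopping test, and the explicit unit-cost-per-in-neighbor-message model needed so that the set operations in Steps~1 and~3 do not silently contribute an extra factor of $N$---is a refinement of the same argument rather than a different route; the paper simply asserts these bounds, leaving that cost-model assumption implicit.
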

\begin{proof}
Algorithm \ref{algorithm4} executes for all nodes $v_i\in\mathcal{V}, i=1,\dots, N$. Furthermore, Algorithm \ref{algorithm4} contains a single while loop, which is upper-bounded by the diameter since $y_{i}$ finds the longest shortest path to node $v_i$. The steps inside the while loop (i.e., steps 1, 2, and 3) are upper-bounded by the maximum in-degree of the network since we examine all of the in-neighbors. Finally, steps 4, 5, and 6 are upper-bounded by a constant. Hence, the computational time-complexity is $\mathcal{O}\left(NDd_{\text{in-degree}}^{\max}\right)$, where $N$ is the number of nodes, $D$ is the finite digraph diameter of the network, and $d_{\text{in-degree}}^{\max}$ is the maximum in-degree of the network.
\end{proof}

The following result demonstrates the scability of Algorithm~\ref{algorithm4}.

\begin{corollary}\label{corollary:distributed_complexity}
Algorithm \ref{algorithm4} can be implemented in a distributed fashion and is scalable with computational time-complexity $\mathcal{O}\left(Dd_{\text{in-degree}}^{\max}\right)$. \hfill $\circ$
\end{corollary}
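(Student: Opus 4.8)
The plan is to derive the corollary directly from Theorem~\ref{theorem:complexity} by showing that the factor of $N$ in the centralized complexity $\mathcal{O}(NDd_{\text{in-degree}}^{\max})$ arises solely from executing the per-node routine sequentially, and that this factor disappears once the $N$ copies run concurrently on their respective nodes. To make this rigorous I would first certify that Algorithm~\ref{algorithm4} is genuinely distributable: inspecting Steps~1--3, every quantity that node $v_i$ computes at iteration $k_i+1$ is a function only of its own previous state $x_i[k_i], y_i[k_i]$ and the states $x_j[k_i], y_j[k_i]$ of its in-neighbors $v_j\in\mathcal N_{v_i}^-$, exactly as in the max-consensus-like update of Definition~\ref{def:max_consensus}. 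Hence each node needs only to receive one message per iteration from each in-neighbor and performs no access to the global structure, so the protocol can be implemented with purely local communication.

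Next I would bound the per-node cost. Reusing the argument in the proof of Theorem~\ref{theorem:complexity}, the while loop at any fixed node terminates in $\mathcal{O}(D)$ iterations, since $y_i$ stabilizes once the longest shortest path reaching $v_i$ has been traversed, and each of Steps~1--3 touches at most $d_{\text{in-degree}}^{\max}$ in-neighbors while Steps~4--6 are $\mathcal{O}(1)$. Thus the work performed at node $v_i$ is $\mathcal{O}(Dd_{\text{in-degree}}^{\max})$, independent of $N$. Because the $N$ local routines run in parallel, the wall-clock (equivalently, per-processor) time is the maximum of these per-node costs rather than their sum, which gives $\mathcal{O}(Dd_{\text{in-degree}}^{\max})$ and establishes scalability, as the bound does not grow with the number of vertices.

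The step I expect to be the main obstacle is reconciling the distributed execution with the local stopping rule of Step~4: nodes set their own flag $w_i$ and may halt at different iterations, yet a node that has stopped must still supply correct state to in-neighbors that are still updating. I would resolve this by observing that the stopping criterion fires precisely when $x_i$ ceases to grow, as in Lemma~\ref{lemma:stopping_criteria}; once $x_i$ has stabilized, any later read of $x_i$ by a neighbor returns the identical set, so a halted node can be treated as continuing to broadcast its fixed final state at no additional asymptotic cost. This guarantees that the concurrent schedule reproduces the centralized computation node-for-node, so both correctness (Theorem~\ref{theorem:alg}) and the $\mathcal{O}(Dd_{\text{in-degree}}^{\max})$ bound carry over to the distributed setting.
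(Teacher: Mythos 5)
Your proposal is correct and follows essentially the same route as the paper's proof: invoke Theorem~\ref{theorem:complexity}, observe that Steps~1--6 use only local information from in-neighbors, and conclude that parallel per-node execution removes the factor of $N$, leaving $\mathcal{O}\left(Dd_{\text{in-degree}}^{\max}\right)$ per node. Your additional treatment of the asynchronous stopping rule (a halted node continuing to expose its stabilized state $x_i$) is a sound elaboration that the paper's two-line proof leaves implicit, but it does not constitute a different approach.
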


\begin{proof}
This readily follows from Theorem~\ref{theorem:complexity} and from noticing that Steps 1-6 can be performed locally for each node $v_i$, where $i=1,\dots,N$. Therefore, the algorithm can be computed in a distributed fashion reducing the need to account for $N$ in the computational complexity in Theorem~\ref{theorem:complexity}.
\end{proof}

The space-complexity for performing Algorithm~\ref{algorithm4} on each node $v_i$, where $i=1,\ldots,N$, in a distributed fashion is $\mathcal{O}(|\mathcal{V}_i|)$, where $\mathcal{G}_i=(\mathcal{V}_i,\mathcal{E}_i)$ is the SCC that node $v_i$ belongs to.

In the next result, we give a solution to $(\mathbf{P}_2)$.
\begin{theorem}\label{theorem:diameter}
After executing Algorithm \ref{algorithm4} on every node $v_i\in\mathcal{V}$, where $i=\{1,\ldots,N\}$, we have that $D=\displaystyle\mathop{\max}_{v_i\in\mathcal V} k_i-2$ is a solution to $(\mathbf{P}_2)$.
\end{theorem}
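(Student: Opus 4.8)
The plan is to connect the terminal counter $k_i$ at each node to a purely graph-theoretic quantity — the \emph{in-eccentricity} $r_i$ of $v_i$, defined as the largest shortest-path length from any vertex that can reach $v_i$ to $v_i$ itself — and then to observe that the finite digraph diameter is exactly $D = \max_{v_i \in \mathcal{V}} r_i$. First I would establish the invariant that, for every $k \ge 0$, the set $x_i[k]$ produced in Step~1 equals the set of vertices admitting a directed walk of length at most $k$ ending at $v_i$; this follows by induction on $k$ from the union update, with $x_i[0] = \{i\}$ as the base case. An immediate consequence is that $y_i[k] = |x_i[k]|$ for all $k$: since $x_i[k+1] \supseteq x_j[k]$ for every in-neighbour $v_j$ (the union in Step~1 already contains $x_j[k]$), the neighbour term $\max_{v_j \in \mathcal{N}_{v_i}^{-}}|x_j[k]|$ in Step~2 is always dominated by $|x_i[k+1]|$, so $y_i$ merely records the cardinality of the growing in-reachability set.

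Next I would analyse the growth of $x_i[k]$. Because any vertex at shortest in-distance $d$ from $v_i$ forces the existence of vertices at every smaller distance $0,1,\dots,d$ along a shortest path, the sets strictly grow, $x_i[0]\subsetneq x_i[1] \subsetneq \cdots \subsetneq x_i[r_i]$, and then stall, $x_i[r_i] = x_i[r_i+1] = \cdots$. Translating through $y_i[k]=|x_i[k]|$, the flag $w_i[k] = (y_i[k]=y_i[k-1])$ is \textsc{False} for $k \le r_i$ and becomes \textsc{True} for the first time at $k = r_i+1$; by Lemma~\ref{lemma:stopping_criteria} this is precisely when node $v_i$ halts. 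Tracking the while-loop counter — which starts at $0$ and is incremented once per pass in Step~5 — then pins down the terminal value of $k_i$ as a fixed offset of $r_i$. Finally, taking the maximum over all vertices and invoking $\displaystyle\max_{v_i \in \mathcal{V}} r_i = D$ (the definition of the diameter as the longest shortest path over all reachable pairs) yields the claimed relation between $\max_{v_i} k_i$ and $D$.

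I expect the main obstacle to be the last, seemingly clerical step: determining the exact additive constant relating $\max_{v_i} k_i$ to $D$. This requires fixing three conventions simultaneously and consistently — the indexing under which $y_i[0]=1$ is the $k=0$ entry, the fact that termination is detected one pass \emph{after} the last pass in which $x_i$ actually grew, and the boundary behaviour of a source node ($r_i = 0$, no in-neighbours), where the empty maximum in Step~2 must be interpreted so that $w_i$ turns \textsc{True} immediately. I would verify the constant on a small directed path such as $v_1 \to v_2 \to v_3$, computing the per-node counters by hand, before asserting the general formula; this boundary bookkeeping is exactly where an off-by-one is easiest to introduce, so it is the step I would scrutinise most carefully.
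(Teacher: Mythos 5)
Your structural analysis is correct and, in fact, sharper than the paper's own argument: the invariant that $x_i[k]$ is exactly the set of vertices at in-distance at most $k$ from $v_i$, the observation that the union in Step~1 dominates every in-neighbour's set so that $y_i[k]=|x_i[k]|$, and the strict-growth-then-stall behaviour $x_i[0]\subsetneq\cdots\subsetneq x_i[r_i]=x_i[r_i+1]=\cdots$ are all valid, and $D=\max_{v_i\in\mathcal V} r_i$ is the right bridge to the diameter (the paper instead argues informally, by contradiction, about how many rounds information needs to propagate, and never isolates a per-node eccentricity). But your proposal has a genuine gap, and it sits exactly where you flagged it: you never determine the ``fixed offset'' relating the terminal $k_i$ to $r_i$, and that additive constant is the entire content of the formula $D=\max_{v_i\in\mathcal V}k_i-2$ once Lemma~\ref{lemma:stopping_criteria} is in hand. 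Worse, carrying your bookkeeping to completion refutes the stated constant: $w_i[k]$ is \textsc{False} for all $k\le r_i$ and first \textsc{True} at $k=r_i+1$; since Step~5 increments $k_i$ exactly once per pass and the guard tests $w_i[k_i]$ after that increment, node $v_i$ exits with terminal value $k_i=r_i+1$ (your source-node boundary case is consistent: $r_i=0$ gives $x_i[1]=x_i[0]=\{i\}$, hence $y_i[1]=1=y_i[0]$ and $k_i=1$). Therefore $\max_{v_i\in\mathcal V}k_i=D+1$, and your route proves $D=\max_{v_i\in\mathcal V}k_i-1$, not $-2$.

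This is not a defect of your approach but an off-by-one in the theorem itself, which your method exposes and the paper's proof glosses over: the paper asserts that the terminal $k_i$ equals ``the number of iterations plus one,'' which does not follow from the pseudocode (after $t$ loop passes one has $k_i=t$, and the loop exits after exactly $r_i+1$ passes), and the paper's own traces confirm your constant rather than the stated one --- Tables~\ref{tab:example2}, \ref{tab:Complete} and~\ref{tab:Tree} terminate with $\max_i k_i = 6, 2, 4$ on graphs of diameter $5, 1, 3$ respectively. So the hand verification on a short directed path that you deferred was not optional hygiene; performing it is precisely what completes (and corrects) the argument. Two repairs would finish the job: state the conclusion as $D=\max_{v_i\in\mathcal V}k_i-1$ for the algorithm as written (or insert one additional increment of $k_i$ after the loop so that the printed $-2$ becomes true), and note that halting at $k=r_i+1$ follows directly from the while-guard, not from Lemma~\ref{lemma:stopping_criteria}, which you cite for that step.
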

\begin{proof}
We will show that Algorithm \ref{algorithm4} converges after $D+1$ iterations, where $D$ is the finite digraph diameter of the input digraph. From Lemma~\ref{lemma:findingSCCs}, the algorithm terminates when all of the SCCs have been found, which occurs when no new information is being received by any node from its neighbors (or itself) at a subsequent time step. If we assume that the digraph has diameter $D$, this implies that there exists a pair of nodes $u$ and $v$ such that the size of the shortest path between $u$ and $v$ is $D$. 
Suppose that node $v$ receives the information of node $u$ in $k<D+1$ iterations where the information travels to the neighbors of each node in one iteration. 
Then, there must be another path from $u$ to $v$ with $k-1$ edges, which contradicts the fact that the shortest path between $u$ and $v$ has size $D$. 

Now, suppose that the algorithm only converges after $k>D+1$ iterations. 
This means that there is information from a node $u$ that only reaches a node $v$ after $k$ iterations. 
However, since information is sent to the neighbors at each iteration, then the shortest path between $u$ and $v$ has size $k-1$, which again contradicts the fact that the longest shortest finite path between two nodes has size $D$. Therefore, the algorithm converges in $D+1$ iterations. Hence, the diameter will be one less than the maximum number of iterations among all nodes. Since Step 5 increments $k_i$ before terminating, then, $k_i$ denotes the number of iterations (for $v_i$) plus one. Conveniently, $D=\displaystyle\mathop{\max}_{v_i\in\mathcal V} k_i-2$ finds precisely the maximum number of iterations plus one among all nodes $v_i$ (i.e., $\max_{v_i\in\mathcal{V}}k_i)$ and subtracts two to obtain the finite digraph diameter $D$. 
\end{proof}

We emphasize that computing the finite digraph diameter requires the number of iterations for each node. Next, we give the computational time-complexity for computing the finite digraph diameter. 

\begin{theorem}\label{theorem:complexity_diameter}
Computing the finite digraph diameter requires a computational time-complexity of $\mathcal{O}\left(Dd_{\text{in-degree}}^{\max}+N\right)$. 
\end{theorem}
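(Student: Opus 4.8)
The plan is to combine the distributed running cost of Algorithm~\ref{algorithm4} with the additional cost of aggregating the per-node iteration counts $k_i$ into the diameter, via the identity $D=\displaystyle\mathop{\max}_{v_i\in\mathcal V}k_i-2$ established in Theorem~\ref{theorem:diameter}. The key observation is that the diameter is \emph{not} a purely local quantity (unlike the SCC sets $\mathcal S_i^{*}$), so its computation decomposes into two stages: running the algorithm to generate every $k_i$, and then reducing those $N$ scalars to a single maximum.

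First, I would invoke Corollary~\ref{corollary:distributed_complexity}, which guarantees that executing Algorithm~\ref{algorithm4} in a distributed fashion costs $\mathcal{O}\left(Dd_{\text{in-degree}}^{\max}\right)$ and, by construction of Step~5, produces at every node $v_i$ the scalar $k_i$ recording one more than its number of iterations. This is the dominant computation-and-communication term, arising from the while loop bounded by $D$ (each $y_i$ tracks the longest shortest path reaching $v_i$) combined with the per-iteration $\mathcal{O}\!\left(d_{\text{in-degree}}^{\max}\right)$ work of Steps~1--3 over the in-neighbors.

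Second, I would account for the aggregation step. Since Theorem~\ref{theorem:diameter} gives $D=\displaystyle\mathop{\max}_{v_i\in\mathcal V}k_i-2$, recovering $D$ requires inspecting the iteration count of every one of the $N$ nodes. A single linear scan (equivalently, a max-reduction) over the scalars $k_1,\dots,k_N$ costs $\mathcal{O}(N)$ comparisons, and the concluding subtraction of $2$ is $\mathcal{O}(1)$. Adding the two contributions, $\mathcal{O}\!\left(Dd_{\text{in-degree}}^{\max}\right)$ for generating the $k_i$ plus $\mathcal{O}(N)$ for the maximization, yields the claimed $\mathcal{O}\!\left(Dd_{\text{in-degree}}^{\max}+N\right)$.

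The main point to argue carefully—more a conceptual subtlety than a technical obstacle—is \emph{why the additive $N$ cannot be absorbed} into the distributed bound of Corollary~\ref{corollary:distributed_complexity}. I would emphasize the contrast with $\mathbf{P}_1$: the SCC decomposition is fully distributable because each node needs only its own $\mathcal S_i^{*}$, whereas the diameter is a global functional of the network that fundamentally depends on the largest $k_i$. Hence gathering or reducing across all $N$ nodes is unavoidable, and the $N$ term must appear explicitly alongside the distributed per-node cost.
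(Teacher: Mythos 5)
Your proposal is correct and follows essentially the same route as the paper: it invokes Corollary~\ref{corollary:distributed_complexity} for the $\mathcal{O}\left(Dd_{\text{in-degree}}^{\max}\right)$ distributed cost of generating the scalars $k_i$, and then adds $\mathcal{O}(N)$ for the max-reduction $D=\max_{v_i\in\mathcal V}k_i-2$ from Theorem~\ref{theorem:diameter}. Your extra remark on why the additive $N$ term cannot be absorbed into the distributed bound is a reasonable elaboration of the paper's one-line justification, but it does not change the argument.
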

\begin{proof}
Following from Theorem~\ref{theorem:diameter}, we obtain the finite digraph diameter by executing Algorithm~\ref{algorithm4} on every single node $v_i\in\mathcal{V}$. Hence, from Corollary~\ref{corollary:distributed_complexity}, we see that Algorithm~\ref{algorithm4} has a computational time-complexity of $\mathcal{O}\left(Dd^{\max}_{\text{in-degree}}\right)$ when executed distributively. The final term $N$ in the complexity is added because we must determine the maximum number of iterations among all of the nodes~$v_i$, where $i=1,\dots,N$. 
\end{proof}

Finally, we explore the average computational time-complexity in some special random networks. 
\begin{corollary}\label{corollary:erdos_renyi_complexity}
For an Erdős–Rényi network with $N$ nodes and $m$ edges, the expected computational time-complexity of Algorithm~\ref{algorithm4} is $\mathcal{O}\left(\left(\frac{\log(N)-\gamma}{\log(2m/N)}+\frac{1}{2}\right)\frac{2m}{N}\right)$, where $\gamma$ is the Euler-Mascheroni constant.
\end{corollary}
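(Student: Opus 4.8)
The plan is to combine the distributed complexity bound of Corollary~\ref{corollary:distributed_complexity}, namely $\mathcal{O}(D\,d_{\text{in-degree}}^{\max})$, with ensemble-specific estimates of its two governing quantities---the diameter $D$ and the maximum in-degree $d_{\text{in-degree}}^{\max}$---specialized to the Erdős-Rényi model. Since both are random variables here, the ``expected'' complexity is obtained by substituting their expected (or concentration) values. First I would fix the average degree: an undirected ER graph on $N$ nodes with $m$ edges has degree sum $2m$, hence average degree $\langle k\rangle = 2m/N$, and I will use $\log\langle k\rangle = \log(2m/N)$ throughout.

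Next I would estimate the diameter factor using the classical closed-form expression for the average shortest-path length of ER graphs, $\frac{\log N-\gamma}{\log\langle k\rangle}+\frac{1}{2}$, where $\gamma$ is the Euler-Mascheroni constant. Because in the connected regime the diameter scales to leading order as $\log N/\log\langle k\rangle$, I would adopt this expression as the estimate of $D$, which produces the first factor $\frac{\log N-\gamma}{\log(2m/N)}+\frac{1}{2}$ of the stated bound.

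The remaining step is to estimate the maximum-in-degree factor. In the regime where the finite diameter is well defined (i.e., the graph is dense enough to be connected), the degree distribution concentrates and the maximum in-degree is of the same order as the mean $\langle k\rangle = 2m/N$; substituting this proxy for $d_{\text{in-degree}}^{\max}$ supplies the second factor $\frac{2m}{N}$. Multiplying the two factors and invoking the $\mathcal{O}(D\,d_{\text{in-degree}}^{\max})$ bound then yields the expected complexity $\mathcal{O}\!\left(\left(\frac{\log N-\gamma}{\log(2m/N)}+\frac{1}{2}\right)\frac{2m}{N}\right)$.

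I expect the main obstacle to be the in-degree estimate: replacing the maximum in-degree by the average degree is only an order-of-magnitude approximation, since the maximum in-degree of an ER graph typically exceeds the mean by a lower-order fluctuation whose size depends on the sparsity regime. A milder subtlety is the passage from the average shortest-path length, for which the closed form is exact to leading order, to the diameter $D$ that actually bounds the iteration count; these agree at leading order but differ in constants and lower-order terms. I would therefore state the result as an expected-order estimate and make explicit the degree-concentration assumption under which the substitution $d_{\text{in-degree}}^{\max}\approx 2m/N$ is justified.
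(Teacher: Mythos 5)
Your proposal matches the paper's proof: the paper likewise invokes Corollary~\ref{corollary:distributed_complexity} and substitutes the average degree $2m/N$ for $d_{\text{in-degree}}^{\max}$ and the closed-form average path length $\frac{\log(N)-\gamma}{\log(2m/N)}+\frac{1}{2}$ for $D$, citing the same ensemble formulas. The caveats you raise (maximum in-degree versus mean degree, diameter versus average path length) are real but are left implicit in the paper's own two-line proof, so your version is, if anything, more candid about the heuristic nature of the substitution.
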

\begin{proof}
The average degree of an Erdős–Rényi network is $\frac{2m}{N}$, and the average path length is $\frac{\log(N)-\gamma}{\log(2m/N)}+\frac{1}{2}$ \cite{avgpathlengthER}. Thus, by Corollary~\ref{corollary:distributed_complexity}, the average time-complexity is $\mathcal{O}\left(\left(\frac{\log(N)-\gamma}{\log(2m/N)}+\frac{1}{2}\right)\frac{2m}{N}\right)$, where~$\gamma$ is the Euler-Mascheroni constant.
\end{proof}

\begin{corollary}\label{corollary:barabasi_albert_complexity}
For a Barabási-Albert network with $N$ nodes and $m$ edges added to a new vertex at each step, the expected time-complexity of Algorithm \ref{algorithm4} is $\mathcal{O}\left(2m\left(\frac{\log(N)-\log(m/2)-1-\gamma}{\log(\log(N))+\log(m/2)}+\frac{3}{2}\right)\right)$. 
\end{corollary}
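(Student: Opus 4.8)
The plan is to mirror the structure of the proof of Corollary~\ref{corollary:erdos_renyi_complexity} exactly, replacing the worst-case quantities $D$ and $d_{\text{in-degree}}^{\max}$ appearing in Corollary~\ref{corollary:distributed_complexity} by their expected (average-case) analogues for the Barabási-Albert ensemble. The skeleton is therefore: (i) identify the average degree of a Barabási-Albert network, (ii) invoke the known asymptotic expression for its average (shortest) path length, and (iii) substitute both into the distributed complexity bound $\mathcal{O}\!\left(Dd_{\text{in-degree}}^{\max}\right)$ furnished by Corollary~\ref{corollary:distributed_complexity}.

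First I would establish the average degree. In the Barabási-Albert construction each newly introduced vertex attaches $m$ edges to the existing network, so after growing the graph to $N$ vertices the total number of edges is $\approx mN$. Since each edge contributes to the degree of two endpoints, the average degree is $\tfrac{2mN}{N}=2m$, which is the factor multiplying the parenthesized term in the claimed bound. This step is elementary and should occupy a single sentence.

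Next I would supply the average path length. Unlike the Erdős-Rényi case, Barabási-Albert networks exhibit the \emph{ultra-small-world} behaviour in which the typical distance scales like $\log N/\log\log N$ rather than $\log N$. I would cite the appropriate result from the scale-free network literature to assert that the expected shortest path length equals $\frac{\log(N)-\log(m/2)-1-\gamma}{\log(\log(N))+\log(m/2)}+\frac{3}{2}$, with $\gamma$ the Euler-Mascheroni constant, exactly as it appears inside the parentheses of the statement. Combining this with the average degree $2m$ and Corollary~\ref{corollary:distributed_complexity} then yields the stated expected time-complexity $\mathcal{O}\!\left(2m\left(\frac{\log(N)-\log(m/2)-1-\gamma}{\log(\log(N))+\log(m/2)}+\frac{3}{2}\right)\right)$, closing the argument.

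The main obstacle is step (ii): the average-path-length formula for Barabási-Albert graphs is considerably subtler than the Fronczak-type expression used for Erdős-Rényi networks, since it depends on the $\log\log N$ correction characteristic of preferential-attachment graphs. I do not expect to derive it from scratch here; rather, the work is to locate and correctly cite the reference establishing this asymptotic, and to confirm that the stated expression matches its conventions (in particular the appearance of $m/2$ inside the logarithms and the additive constant $\tfrac{3}{2}$, which parallels the $\tfrac{1}{2}$ in the Erdős-Rényi case). Once that citation is in place, the remainder is a direct substitution and requires no further calculation.
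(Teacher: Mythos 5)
Your proposal matches the paper's proof essentially verbatim: the paper likewise takes the average degree $2m$ and the average path length $\frac{\log(N)-\log(m/2)-1-\gamma}{\log(\log(N))+\log(m/2)}+\frac{3}{2}$ from the cited literature (in fact the same Fronczak-type reference used for the Erdős-Rényi case covers Barabási-Albert networks too) and substitutes both into the bound of Corollary~\ref{corollary:distributed_complexity}. Your additional one-line justification of the average degree and your caution about matching the citation's conventions are fine but not needed beyond what the paper does.
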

\begin{proof}
Since the average degree is $2m$, and the average path length is $\frac{\log(N)-\log(m/2)-1-\gamma}{\log(\log(N))+\log(m/2)}+\frac{3}{2}$ \cite{avgpathlengthER}, by Corollary \ref{corollary:distributed_complexity}, the average \mbox{time-complexity} reduces to $\mathcal{O}\left(2m\left(\frac{\log(N)-\log(m/2)-1-\gamma}{\log(\log(N))+\log(m/2)}+\frac{3}{2}\right)\right)$.
\end{proof}

\begin{corollary}\label{corollary:watts_strogatz_complexity}
For a Watts-Strogatz network with $N$ nodes, $K$ edges per vertex, and rewiring probability $p$, the expected time-complexity of Algorithm \ref{algorithm4} is $\mathcal{O}(N/2)$ as $p \rightarrow 0$ and $\mathcal{O}\left(K\log(N)/\log(K)\right)$ as $p \rightarrow 1$. 
\end{corollary}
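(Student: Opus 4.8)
The plan is to follow the same template used in Corollaries~\ref{corollary:erdos_renyi_complexity} and~\ref{corollary:barabasi_albert_complexity}: identify the average degree and the average path length of a Watts-Strogatz network in each of the two limiting regimes, and then substitute these quantities into the distributed complexity bound $\mathcal{O}\left(D d^{\max}_{\text{in-degree}}\right)$ from Corollary~\ref{corollary:distributed_complexity}, replacing $D$ by the average path length and $d^{\max}_{\text{in-degree}}$ by the average degree.

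First I would observe that, by construction, every vertex of a Watts-Strogatz network retains exactly $K$ incident edges under rewiring, so in the directed interpretation used throughout the paper the in-degree of each node is $K$. Consequently the degree factor entering the complexity is $K$ in both limits, and the only quantity that depends on the rewiring probability $p$ is the characteristic (average) path length $L$. This isolates the problem to recalling the correct asymptotics of $L$ as $p \to 0$ and as $p \to 1$.

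Next I would invoke the two well-known limiting regimes of the Watts-Strogatz model. As $p \to 0$, the graph approaches a regular ring lattice, whose characteristic path length scales linearly, $L \approx N/(2K)$. As $p \to 1$, the graph approaches a near-random regular graph, whose average path length grows logarithmically, $L \approx \log(N)/\log(K)$. Substituting each into $\mathcal{O}(L \cdot K)$ via Corollary~\ref{corollary:distributed_complexity} yields $\mathcal{O}\left((N/(2K)) \cdot K\right) = \mathcal{O}(N/2)$ as $p \to 0$, and $\mathcal{O}\left((\log(N)/\log(K)) \cdot K\right) = \mathcal{O}\left(K \log(N)/\log(K)\right)$ as $p \to 1$, which is exactly the claimed statement.

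The main obstacle I anticipate is not the algebra but the justification of the two limiting path-length asymptotics and the reconciliation of the classically undirected Watts-Strogatz construction with the directed-graph setting of the paper. The lattice bound at $p \to 0$ and the random-graph bound at $p \to 1$ are standard in the small-world literature and can be cited; the subtle intermediate crossover regime need not be analyzed here, since the corollary only concerns the two endpoints. I would therefore make explicit that we take the symmetric digraph associated with the undirected model (so that each in-degree equals $K$), ensuring that the single-node complexity of Corollary~\ref{corollary:distributed_complexity} applies verbatim, and then conclude by direct substitution.
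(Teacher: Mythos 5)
Your proposal is correct and follows essentially the same route as the paper's own proof: take the average degree $K$ and the average path length $\frac{N}{2K}$ as $p\rightarrow 0$ and $\frac{\log(N)}{\log(K)}$ as $p\rightarrow 1$ from the Watts-Strogatz literature, and substitute them into the distributed bound of Corollary~\ref{corollary:distributed_complexity}. Your added remarks on the symmetric-digraph interpretation and each node's in-degree equaling $K$ are a reasonable clarification the paper leaves implicit, but they do not change the argument.
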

\begin{proof}
The Watts-Strogatz network has an average degree of $K$, and the average path length is $\frac{N}{2K}$ as $p\rightarrow 0$ and $\frac{\log(N)}{\log(K)}$as $p\rightarrow 1$ \cite{watts1998collective}. Hence, by Corollary~\ref{corollary:distributed_complexity}, the average time-complexity of Algorithm \ref{algorithm4} for the Watts-Strogatz network is $\mathcal{O}(N/2)$ as $p\rightarrow 0$ and $\mathcal{O}(K\log(N)/\log(K))$ as $p\rightarrow 1$.
\end{proof}


\section{Pedagogical Examples}\label{sec:examples}

In this section, we present several pedagogical examples to illustrate how Algorithm \ref{algorithm4} works and demonstrate its computational complexity. In what follows, when referring to each of the SCCs, we will only mention the indices of the nodes contained in that particular SCC (i.e., if $v_i\in\mathcal{V}_s$ then with some abuse of notation we refer to that node as $i\in\mathcal{V}_s$) as we are implicitly assuming that their edges are formed by  $\mathcal{E}_s = ((\mathcal{V}_s\times \mathcal{V}_s)\cap \mathcal{E)}$.

\subsection{Example 1}
Figure \ref{fig:example2} shows a network with six nodes that contains the following strongly connected components: $\{1,2\},\{3,4\},$ and $\{5,6\}$. 

\begin{figure}[H]
    \centering
\begin{tikzpicture}[scale=.6, transform shape,node distance=1.5cm]
\begin{scope}[every node/.style={circle,thick,draw},square/.style={regular polygon,regular polygon sides=4}]
\node (1) at (0.,0.) {\large $1$};
\node (2) at (2.2748,0.) {\large $2$};
\node (3) at (4.92837,0.) {\large $3$};
\node (4) at (7.68334,0.) {\large $4$};
\node (5) at (10.3395,0.) {\large $5$};
\node (6) at (12.617,0.) {\large $6$};
\end{scope}
\begin{scope}[>={Stealth[black]},
              every edge/.style={draw=black, thick}]
\path [->] (1) edge[bend right=15] node {} (2);
\path [->] (2) edge[bend right=15] node {} (1);
\path [->] (2) edge node {} (3);
\path [->] (3) edge[bend right=15] node {} (4);
\path [->] (4) edge[bend right=15] node {} (3);
\path [->] (4) edge node {} (5);
\path [->] (5) edge[bend right=15] node {} (6);
\path [->] (6) edge[bend right=15] node {} (5);
\end{scope}
\end{tikzpicture}
    \caption{This network has SCCs $\{5,6\},\{3,4\},$ and $\{1,2\}$.}
    \label{fig:example2}
\end{figure}
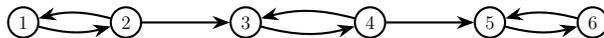
Table \ref{tab:example2} shows the trace of running Algorithm \ref{algorithm4} on Example \ref{fig:example2} for each node $v_i$, where $\mathcal{P}$ is the set of parameters for the algorithm and~$k$ is the total number of iterations. Table \ref{tab:example2} shows in column one that it takes six iterations ($k=6$) to identify the SCCs for Example \ref{fig:example2}. Here, the diameter of the network is 5, which is one less than the total required iterations and is consistent with the results in Theorem~\ref{theorem:diameter}.

\begin{table}[h!]
{\footnotesize
\centering
    \begin{tabular*}{\columnwidth}{@{\extracolsep{\fill}}c@{\extracolsep{\fill}}|@{\extracolsep{\fill}}c|@{\extracolsep{\fill}}c@{\extracolsep{\fill}}c@{\extracolsep{\fill}}c@{\extracolsep{\fill}}c@{\extracolsep{\fill}}c@{\extracolsep{\fill}}c@{\extracolsep{\fill}}}
 ${k}$ & ${\mathcal{P}}$ & ${v_1}$ & ${v_2}$ & ${v_3}$ & ${v_4}$ & ${v_5}$ & ${v_6}$ \\ \hline
 \multirow{4}{1em}{0} & \text{x[0]} & \{1\} & \{2\} & \{3\} & \{4\} & \{5\} & \{6\} \\
 & \text{y[0]} & 1 & 1 & 1 & 1 & 1 & 1 \\
 & \text{z[0]} & \{\} & \{\} & \{\} & \{\} & \{\} & \{\} \\
 & \text{w[0]} & \textsc{False} & \textsc{False} & \textsc{False} & \textsc{False} & \textsc{False} & \textsc{False} \\\hline
 
\multirow{4}{1em}{1} & \text{x[1]} & \{1,2\} & \{1,2\} & \{2,3,4\} & \{3,4\} & \{4,5,6\} & \{5,6\} \\
 & \text{y[1]} & 2 & 2 & 3 & 2 & 3 & 2 \\
 & \text{z[1]} & \{\} & \{\} & \{\} & \{\} & \{\} & \{\} \\
 & \text{w[1]} & \textsc{False} & \textsc{False} & \textsc{False} & \textsc{False} & \textsc{False} & \textsc{False} \\\hline

\multirow{4}{1em}{2} & \text{x[2]} & \{1,2\} & \{1,2\} & \{1,2,3,4\} & \{2,3,4\} & \{3,4,5,6\} & \{4,5,6\} \\
& \text{y[2]} & 2 & 2 & 4 & 3 & 4 & 3 \\
& \text{z[2]} & \{1,2\} & \{1,2\} & \{\} & \{3\} & \{\} & \{5\} \\
& \text{w[2]} & \textsc{True} & \textsc{True} & \textsc{False} & \textsc{False} & \textsc{False} & \textsc{False} \\\hline
\multirow{4}{1em}{3} & \text{x[3]} & \{1,2\} & \{1,2\} & \{1,2,3,4\} & \{1,2,3,4\} & \{2,3,4,5,6\} & \{3,4,5,6\} \\
& \text{y[3]} & 2 & 2 & 4 & 4 & 5 & 4 \\
& \text{z[3]} & \{1,2\} & \{1,2\} & \{3\} & \{3\} & \{\} & \{3,5\} \\
& \text{w[3]} & \textsc{True} & \textsc{True} & \textsc{True} & \textsc{False} & \textsc{False} & \textsc{False} \\\hline
\multirow{4}{1em}{4} & \text{x[4]} & \{1,2\} & \{1,2\} & \{1,2,3,4\} & \{1,2,3,4\} & \{1,2,3,4,5,6\} & \{2,3,4,5,6\} \\
& \text{y[4]} & 2 & 2 & 4 & 4 & 6 & 5 \\
& \text{z[4]} & \{1,2\} & \{1,2\} & \{3,4\} & \{3,4\} & \{\} & \{5\} \\
& \text{w[4]} & \textsc{True} & \textsc{True} & \textsc{True} & \textsc{True} & \textsc{False} & \textsc{False} \\\hline
\multirow{4}{1em}{5} & \text{x[5]} & \{1,2\} & \{1,2\} & \{1,2,3,4\} & \{1,2,3,4\} & \{1,2,3,4,5,6\} & \{1,2,3,4,5,6\} \\
& \text{y[5]} & 2 & 2 & 4 & 4 & 6 & 6 \\
& \text{z[5]} & \{1,2\} & \{1,2\} & \{3,4\} & \{3,4\} & \{5\} & \{5\} \\
& \text{w[5]} & \textsc{True} & \textsc{True} & \textsc{True} & \textsc{True} & \textsc{True} & \textsc{False} \\\hline
\multirow{4}{1em}{6} & \text{x[6]} & \{1,2\} & \{1,2\} & \{1,2,3,4\} & \{1,2,3,4\} & \{1,2,3,4,5,6\} & \{1,2,3,4,5,6\} \\
& \text{y[6]} & 2 & 2 & 4 & 4 & 6 & 6 \\
& \text{z[6]} & \{1,2\} & \{1,2\} & \{3,4\} & \{3,4\} & \{5,6\} & \{5,6\} \\
& \text{w[6]} & \textsc{True} & \textsc{True} & \textsc{True} & \textsc{True} & \textsc{True} & \textsc{True} \\\hline
    \end{tabular*}
    }
    \caption{This table enumerates the values of the parameters ($\mathcal{P}$) at each iteration ($k$) of Algorithm \ref{algorithm4} for all nodes $v_i$ when executed on Example~1. }
    \label{tab:example2}
\end{table}



\vspace{-0.2cm}
\subsection{Example 2: Complete Network}
Figure \ref{fig:complete} shows a complete network with five nodes, so there is a single SCC containing all of the nodes (i.e., $\{1,2,3,4,5\}$). In Table~\ref{tab:Complete}, we see that only two iterations are necessary as this is one more than the diameter of the network -- see Theorem~\ref{theorem:diameter}. 

\begin{figure}[H]
\centering
\begin{tikzpicture}[scale=.6, transform shape,node distance=1.5cm]
\begin{scope}[every node/.style={circle,thick,draw},square/.style={regular polygon,regular polygon sides=4}]
\node (1) at (-2.37764,0.772542) {\large $1$};
\node (2) at (-1.46946,-2.02254) {\large $2$};
\node (3) at (1.46946,-2.02254) {\large $3$};
\node (4) at (2.37764,0.772542) {\large $4$};
\node (5) at (0.,2.5) {\large $5$};
\end{scope}
\begin{scope}[>={Stealth[black]},
              every edge/.style={draw=black, thick}]
\path [->] (1) edge[bend right=15] node {} (2);
\path [->] (1) edge[bend right=15] node {} (3);
\path [->] (1) edge[bend right=15] node {} (4);
\path [->] (1) edge[bend right=15] node {} (5);
\path [->] (2) edge[bend right=15] node {} (1);
\path [->] (2) edge[bend right=15] node {} (3);
\path [->] (2) edge[bend right=15] node {} (4);
\path [->] (2) edge[bend right=15] node {} (5);
\path [->] (3) edge[bend right=15] node {} (1);
\path [->] (3) edge[bend right=15] node {} (2);
\path [->] (3) edge[bend right=15] node {} (4);
\path [->] (3) edge[bend right=15] node {} (5);
\path [->] (4) edge[bend right=15] node {} (1);
\path [->] (4) edge[bend right=15] node {} (2);
\path [->] (4) edge[bend right=15] node {} (3);
\path [->] (4) edge[bend right=15] node {} (5);
\path [->] (5) edge[bend right=15] node {} (1);
\path [->] (5) edge[bend right=15] node {} (2);
\path [->] (5) edge[bend right=15] node {} (3);
\path [->] (5) edge[bend right=15] node {} (4);
\end{scope}
\end{tikzpicture}    
\caption{The complete network contains a single SCC, which is made up of all of the nodes in the network (i.e., $\{1,2,3,4,5\}$).}
    \label{fig:complete}
\end{figure}
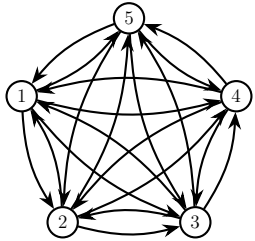

\begin{table}[h!]
    \centering
    {\footnotesize
    \begin{tabular*}{\columnwidth}{@{\extracolsep{\fill}}c@{\extracolsep{\fill}}|@{\extracolsep{\fill}}c|@{\extracolsep{\fill}}c@{\extracolsep{\fill}}c@{\extracolsep{\fill}}c@{\extracolsep{\fill}}c@{\extracolsep{\fill}}c}
 $k$ & $\mathcal{P}$ & $v_1$ & $v_2$ & $v_3$ & $v_4$ & $v_5$ \\ \hline
    \multirow{4}{1em}{0} & \text{x[0]} & \{1\} & \{2\} & \{3\} & \{4\} & \{5\} \\
& \text{y[0]} & 1 & 1 & 1 & 1 & 1 \\
& \text{z[0]} & \{\} & \{\} & \{\} & \{\} & \{\} \\
& \text{w[0]} & \textsc{False} & \textsc{False} & \textsc{False} & \textsc{False} & \textsc{False} \\\hline
\multirow{4}{1em}{1} & \text{x[1]} & \{1,2,3,4,5\} & \{1,2,3,4,5\} & \{1,2,3,4,5\} & \{1,2,3,4,5\} & \{1,2,3,4,5\} \\
& \text{y[1]} & 5 & 5 & 5 & 5 & 5 \\
& \text{z[1]} & \{\} & \{\} & \{\} & \{\} & \{\} \\
& \text{w[1]} & \textsc{False} & \textsc{False} & \textsc{False} & \textsc{False} & \textsc{False} \\\hline
\multirow{4}{1em}{2} & \text{x[2]} & \{1,2,3,4,5\} & \{1,2,3,4,5\} & \{1,2,3,4,5\} & \{1,2,3,4,5\} & \{1,2,3,4,5\} \\
& \text{y[2]} & 5 & 5 & 5 & 5 & 5 \\
& \text{z[2]} & \{1,2,3,4,5\} & \{1,2,3,4,5\} & \{1,2,3,4,5\} & \{1,2,3,4,5\} & \{1,2,3,4,5\} \\
& \text{w[2]} & \textsc{True} & \textsc{True} & \textsc{True} & \textsc{True} & \textsc{True} \\\hline
    \end{tabular*}
    }
    \caption{This table enumerates the values of the parameters ($\mathcal{P}$) at each iteration ($k$) of Algorithm \ref{algorithm4} for all nodes $v_i$ when executed on the complete network.}
    \label{tab:Complete}
\end{table}
\vspace{-0.6cm}
\subsection{Example 3: Tree}
Figure \ref{fig:Tree} shows a tree with nine nodes, so the SCCs are the individual nodes themselves (i.e., $\{1\},\{2\},\{3\},\{4\},\{5\},\{6\},\{7\},\{8\}$, and $\{9\}$). 

\begin{figure}[H]
    \centering
\begin{tikzpicture}[scale=.6, transform shape,node distance=1.5cm]
\begin{scope}[every node/.style={circle,thick,draw},square/.style={regular polygon,regular polygon sides=4}]
\node (1) at (5.,4.5) {\large $1$};
\node (2) at (3.,3.) {\large $2$};
\node (3) at (7.,3.) {\large $3$};
\node (4) at (1.,1.5) {\large $4$};
\node (5) at (3.,1.5) {\large $5$};
\node (6) at (5.,1.5) {\large $6$};
\node (7) at (7.,1.5) {\large $7$};
\node (8) at (0.,0.) {\large $8$};
\node (9) at (2.,0.) {\large $9$};
\end{scope}
\begin{scope}[>={Stealth[black]},every edge/.style={draw=black, thick}]
\path [->] (1) edge node {} (2);
\path [->] (1) edge node {} (3);
\path [->] (2) edge node {} (4);
\path [->] (2) edge node {} (5);
\path [->] (2) edge node {} (6);
\path [->] (3) edge node {} (7);
\path [->] (4) edge node {} (8);
\path [->] (4) edge node {} (9);
\end{scope}
\end{tikzpicture}
    \caption{The SCCs of the tree are the individual nodes themselves (i.e., $\{1\},\{2\},\{3\},\{4\},\{5\},\{6\},\{7\},\{8\},$ and $\{9\}$).}
    \label{fig:Tree}
\end{figure}
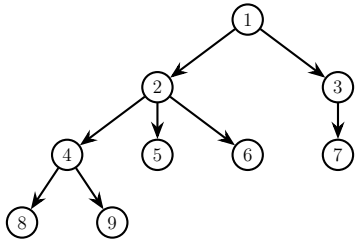

\begin{table}[h!]
    \centering
    \resizebox{.99\columnwidth}{!}{%
    \begin{tabular*}{\columnwidth}{@{\extracolsep{\fill}}c@{\extracolsep{\fill}}|c|@{\extracolsep{\fill}}c@{\extracolsep{\fill}}c@{\extracolsep{\fill}}c@{\extracolsep{\fill}}c@{\extracolsep{\fill}}c@{\extracolsep{\fill}}c@{\extracolsep{\fill}}c@{\extracolsep{\fill}}c@{\extracolsep{\fill}}c}
  $k$ & $\mathcal{P}$ & $v_1$ & $v_2$ & $v_3$ & $v_4$ & $v_5$ & $v_6$ & $v_7$ & $v_8$ & $v_9$ \\ \hline
\multirow{4}{1em}{0} & \text{x[0]} & \{1\} & \{2\} & \{3\} & \{4\} & \{5\} & \{6\} & \{7\} & \{8\} & \{9\} \\
& \text{y[0]} & 1 & 1 & 1 & 1 & 1 & 1 & 1 & 1 & 1 \\
& \text{z[0]} & \{\} & \{\} & \{\} & \{\} & \{\} & \{\} & \{\} & \{\} & \{\} \\
& \text{w[0]} & \textsc{False} & \textsc{False} & \textsc{False} & \textsc{False} & \textsc{False} & \textsc{False} & \textsc{False} & \textsc{False} & \textsc{False} \\\hline
\multirow{4}{1em}{1} & \text{x[1]} & \{1\} & \{1,2\} & \{1,3\} & \{2,4\} & \{2,5\} & \{2,6\} & \{3,7\} & \{4,8\} & \{4,9\} \\
& \text{y[1]} & 1 & 2 & 2 & 2 & 2 & 2 & 2 & 2 & 2 \\
& \text{z[1]} & \{1\} & \{\} & \{\} & \{\} & \{\} & \{\} & \{\} & \{\} & \{\} \\
& \text{w[1]} & \textsc{True} & \textsc{False} & \textsc{False} & \textsc{False} & \textsc{False} & \textsc{False} & \textsc{False} & \textsc{False} & \textsc{False} \\\hline
\multirow{4}{1em}{2} & \text{x[2]} & \{1\} & \{1,2\} & \{1,3\} & \{1,2,4\} & \{1,2,5\} & \{1,2,6\} & \{1,3,7\} & \{2,4,8\} & \{2,4,9\} \\
& \text{y[2]} & 1 & 2 & 2 & 3 & 3 & 3 & 3 & 3 & 3 \\
& \text{z[2]} & \{1\} & \{2\} & \{3\} & \{\} & \{\} & \{\} & \{\} & \{\} & \{\} \\
& \text{w[2]} & \textsc{True} & \textsc{True} & \textsc{True} & \textsc{False} & \textsc{False} & \textsc{False} & \textsc{False} & \textsc{False} & \textsc{False} \\\hline
\multirow{4}{1em}{3} & \text{x[3]} & \{1\} & \{1,2\} & \{1,3\} & \{1,2,4\} & \{1,2,5\} & \{1,2,6\} & \{1,3,7\} & \{1,2,4,8\} & \{1,2,4,9\} \\
& \text{y[3]} & 1 & 2 & 2 & 3 & 3 & 3 & 3 & 4 & 4 \\
& \text{z[3]} & \{1\} & \{2\} & \{3\} & \{4\} & \{5\} & \{6\} & \{7\} & \{\} & \{\} \\
& \text{w[3]} & \textsc{True} & \textsc{True} & \textsc{True} & \textsc{True} & \textsc{True} & \textsc{True} & \textsc{True} & \textsc{False} & \textsc{False} \\\hline
\multirow{4}{1em}{4} & \text{x[4]} & \{1\} & \{1,2\} & \{1,3\} & \{1,2,4\} & \{1,2,5\} & \{1,2,6\} & \{1,3,7\} & \{1,2,4,8\} & \{1,2,4,9\} \\
& \text{y[4]} & 1 & 2 & 2 & 3 & 3 & 3 & 3 & 4 & 4 \\
& \text{z[4]} & \{1\} & \{2\} & \{3\} & \{4\} & \{5\} & \{6\} & \{7\} & \{8\} & \{9\} \\
& \text{w[4]} & \textsc{True} & \textsc{True} & \textsc{True} & \textsc{True} & \textsc{True} & \textsc{True} & \textsc{True} & \textsc{True} & \textsc{True} \\\hline
    \end{tabular*}
    }
    \caption{This table enumerates the values of the parameters ($\mathcal{P}$) at each iteration ($k$) of Algorithm \ref{algorithm4} for all nodes $v_i$ when executed on the tree network.}
    \label{tab:Tree}
\end{table}
In Table \ref{tab:Tree}, we see that four iterations are required to identify the SCCs of the tree network, which is one more than the diameter of the network -- see Theorem~\ref{theorem:diameter}. 




\section{Simulation Results}\label{sec:sim}

In this section, we highlight the performance of our algorithm by contrasting it with one of the most frequently used algorithms among the current state-of-the-art. Specifically, we compare the run times of our algorithm against Kosaraju's algorithm \cite{sharir1981strong} on a series of random networks, including the Erdős–Rényi, Barabási–Albert, and Watts-Strogatz networks. We show how the run times of the two algorithms vary as different parameters of the networks change including the diameter, the maximum in-degree, the number of SCCs, and the number of nodes.  

We ran all the algorithms using Mathematica on a  Dell laptop with an Intel(R) Core(TM) i7-7500U CPU running at 2.70GHz with 12.0GB RAM. For each type of random network (i.e., Erdős-Rényi, Barabási-Albert, and Watts-Strogatz), we randomly generated 50 networks in the following manner. For five different sets of nodes, we randomly generated ten different networks, where the sets of nodes were 100, 200, 300, 400, and 500 nodes. Furthermore, to generate the random networks, we selected two different sets of parameters for each type of random network. 
\subsection{Erdős-Rényi}
\begin{figure}[H]
    \centering
    \subfigure[]{
    \includegraphics[width=.46\linewidth]{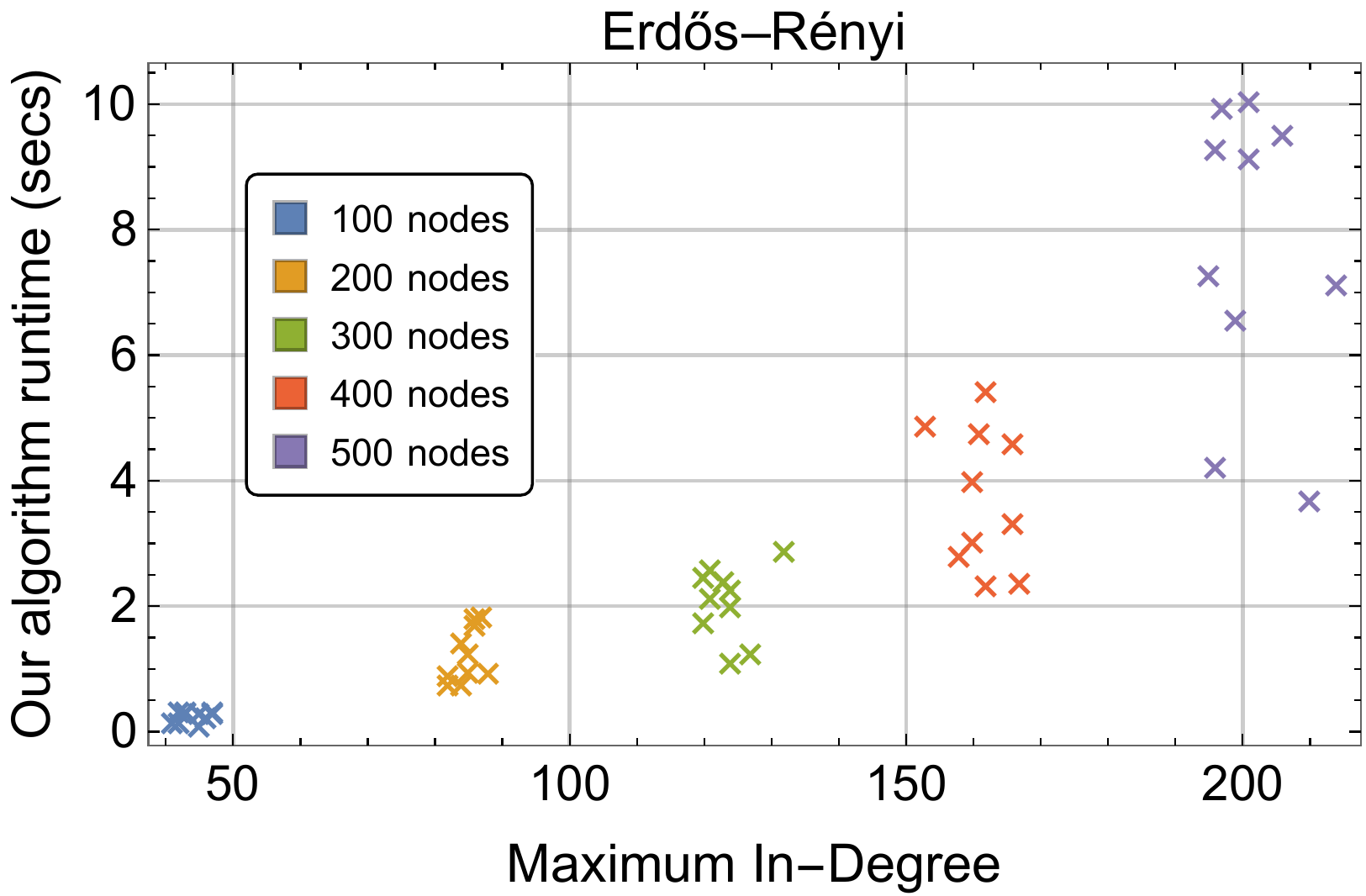}
    }
    \subfigure[]{
    \includegraphics[width=.46\linewidth]{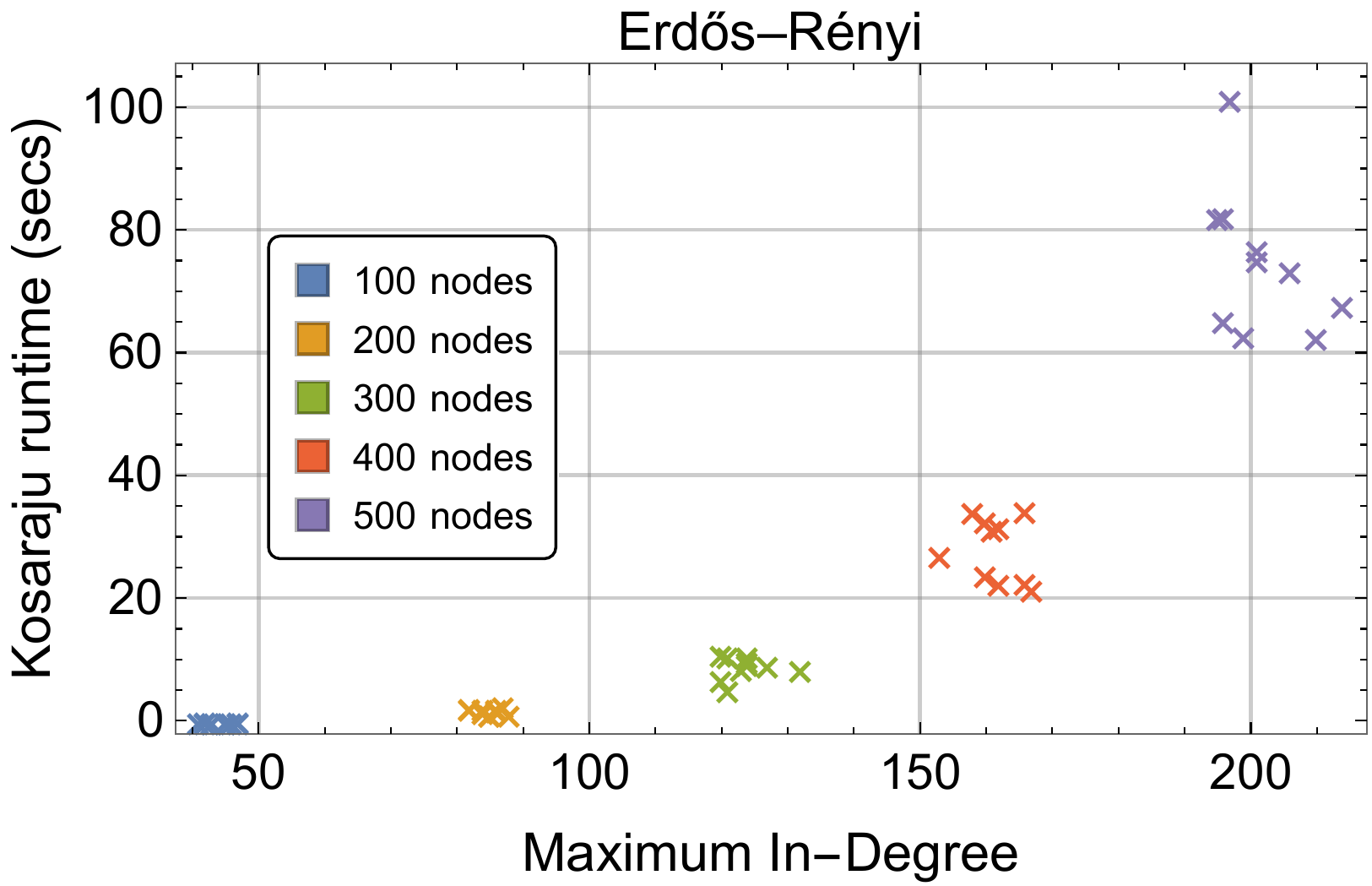}
    }
    \subfigure[]{
    \includegraphics[width=.46\linewidth]{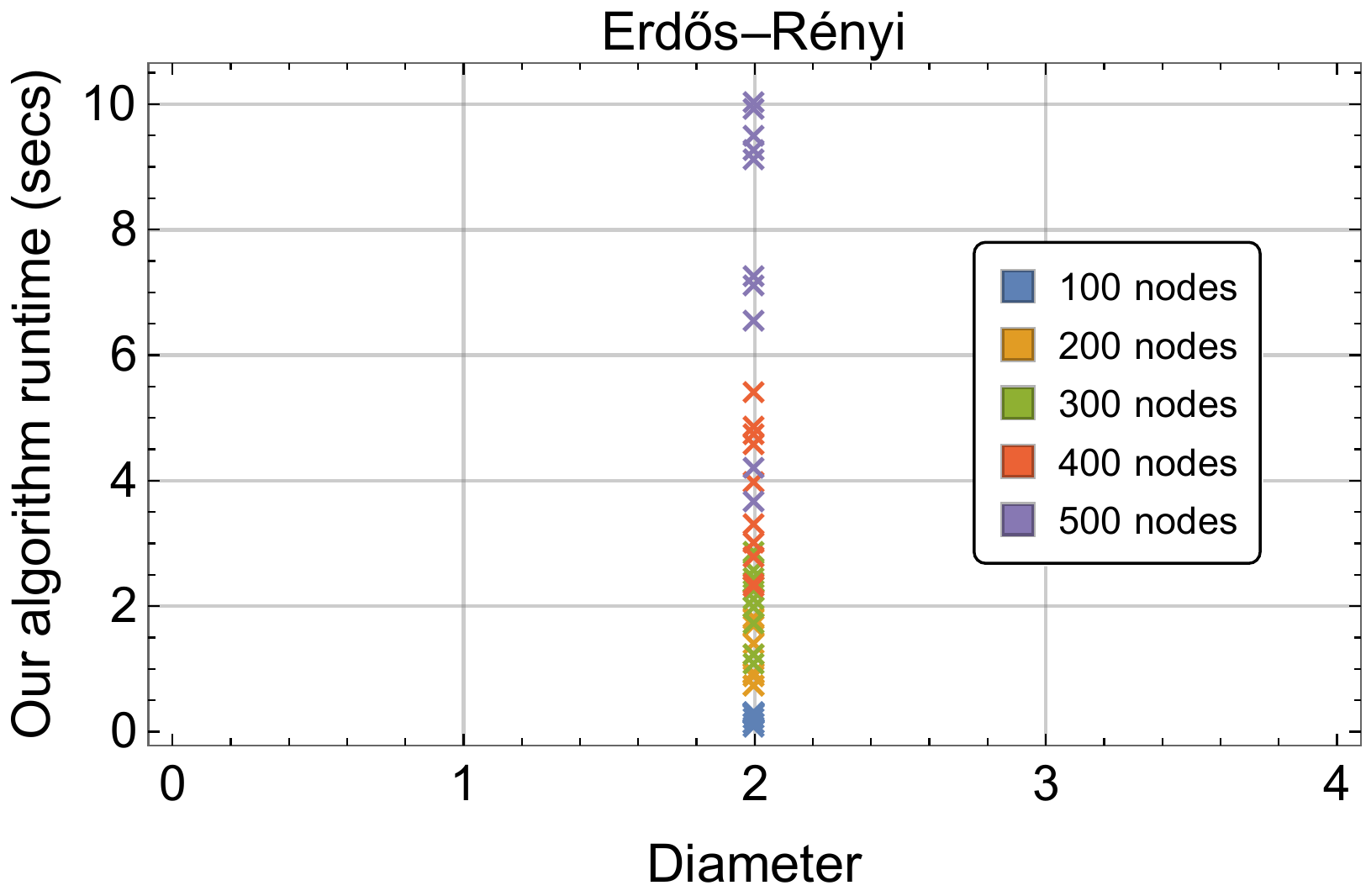}
    }
    \subfigure[]{
    \includegraphics[width=.46\linewidth]{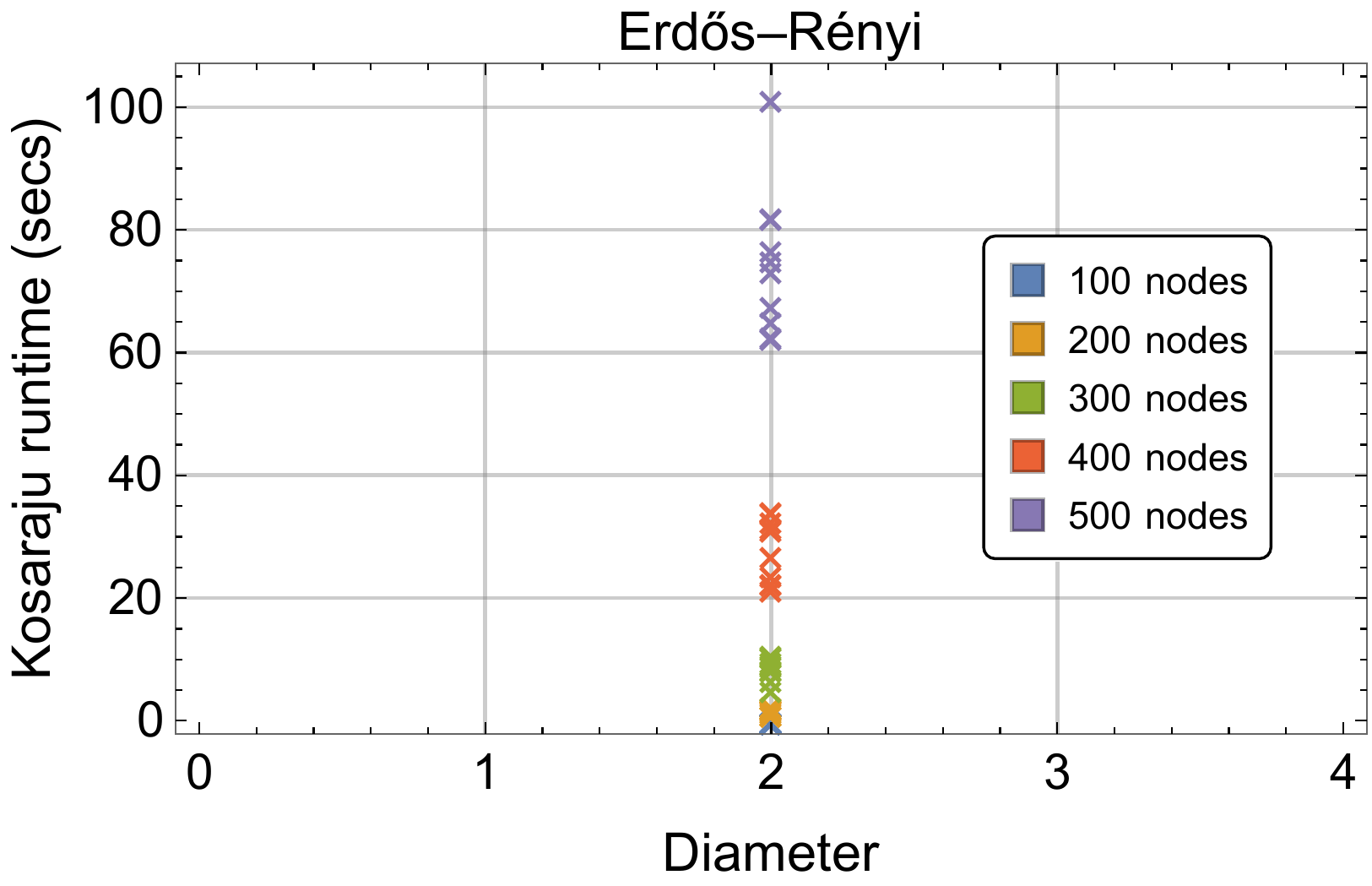}
    }
    \subfigure[]{
    \includegraphics[width=.46\linewidth]{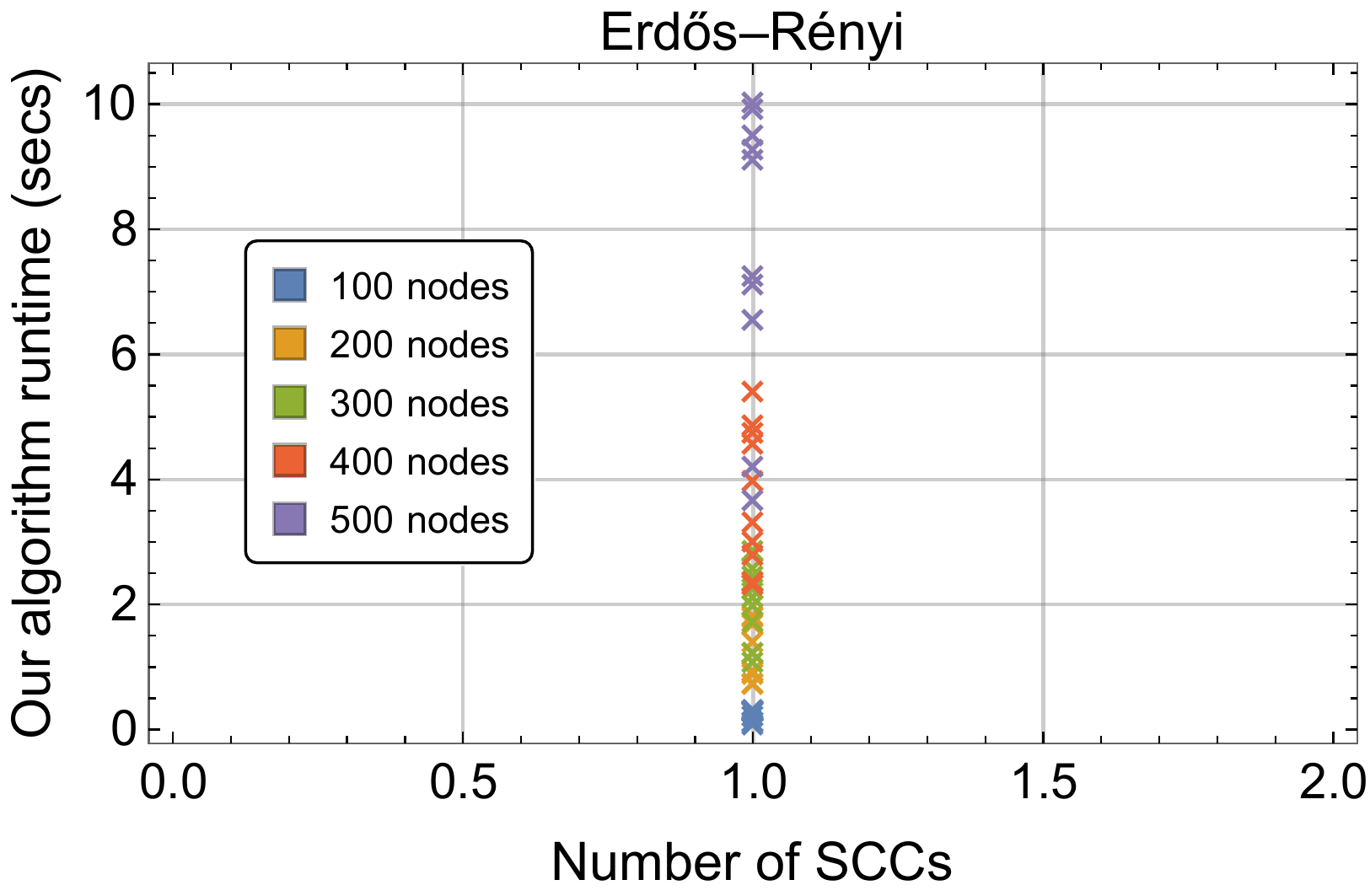}
    }
    \subfigure[]{
    \includegraphics[width=.46\linewidth]{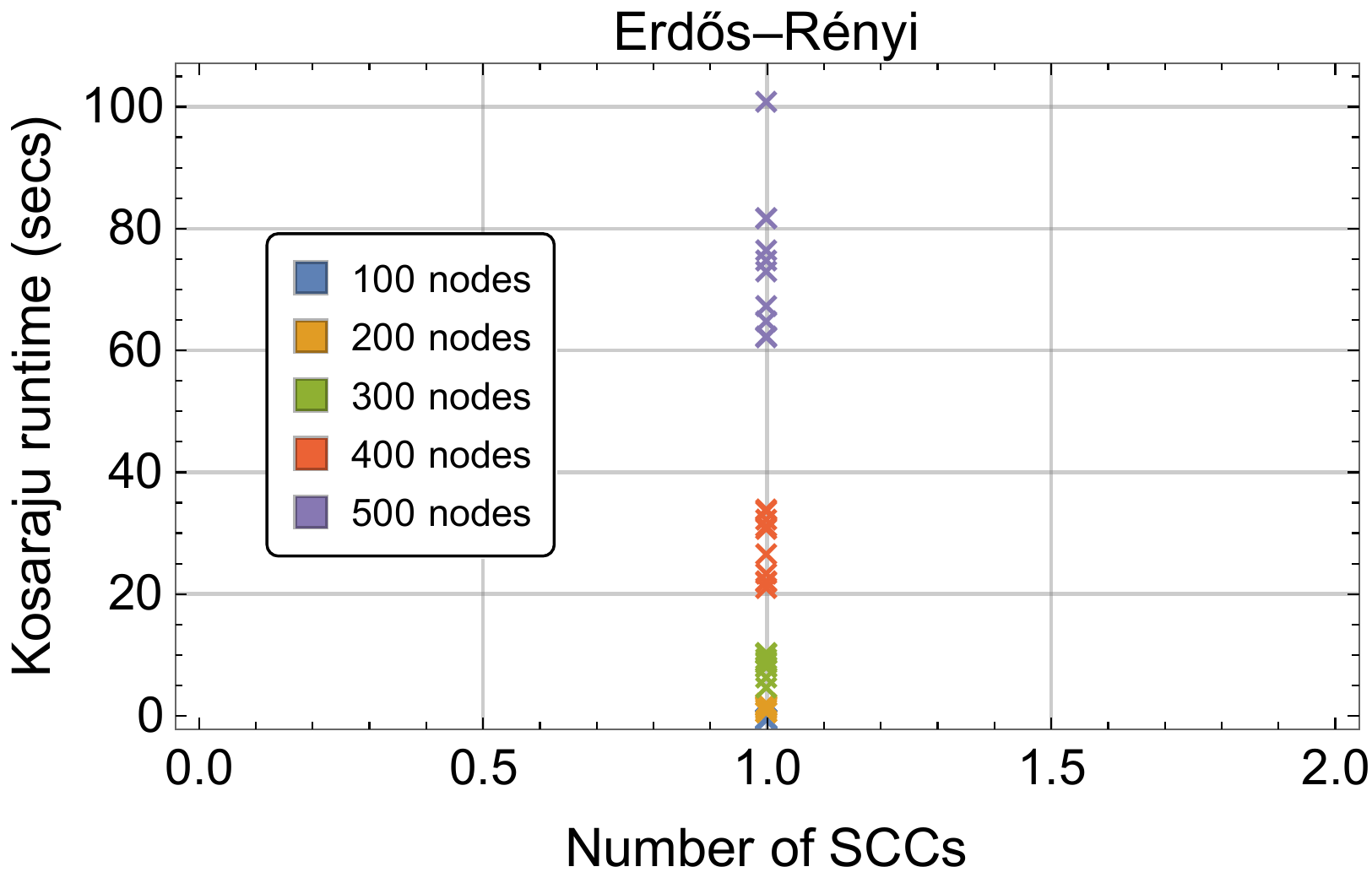}
    }
    \caption{These figures show the relationship between the network properties of some randomly generated Erdős-Rényi networks and their run times for both our proposed algorithm and Kosaraju's algorithm.}
    \label{fig:ERparam1}
\end{figure}
The Erdős-Rényi network requires two parameters, including the number of nodes and the number of edges. For the first set of parameters (Figures \ref{fig:ERparam1} and \ref{fig:ERruntimesparam1}), the number of nodes were chosen to be 100, 200, 300, 400, 500, and the  number of edges were chosen to be the number of nodes raised to the $2/3$ power. In the second set of parameters (Figures \ref{fig:ERparam2} and \ref{fig:ERruntimesparam2}), again the number of nodes remained the same, but the number of edges was fixed to 500 for all the sets of nodes.      


\begin{figure}[H]
    \centering
    \includegraphics[width =0.46 \linewidth]{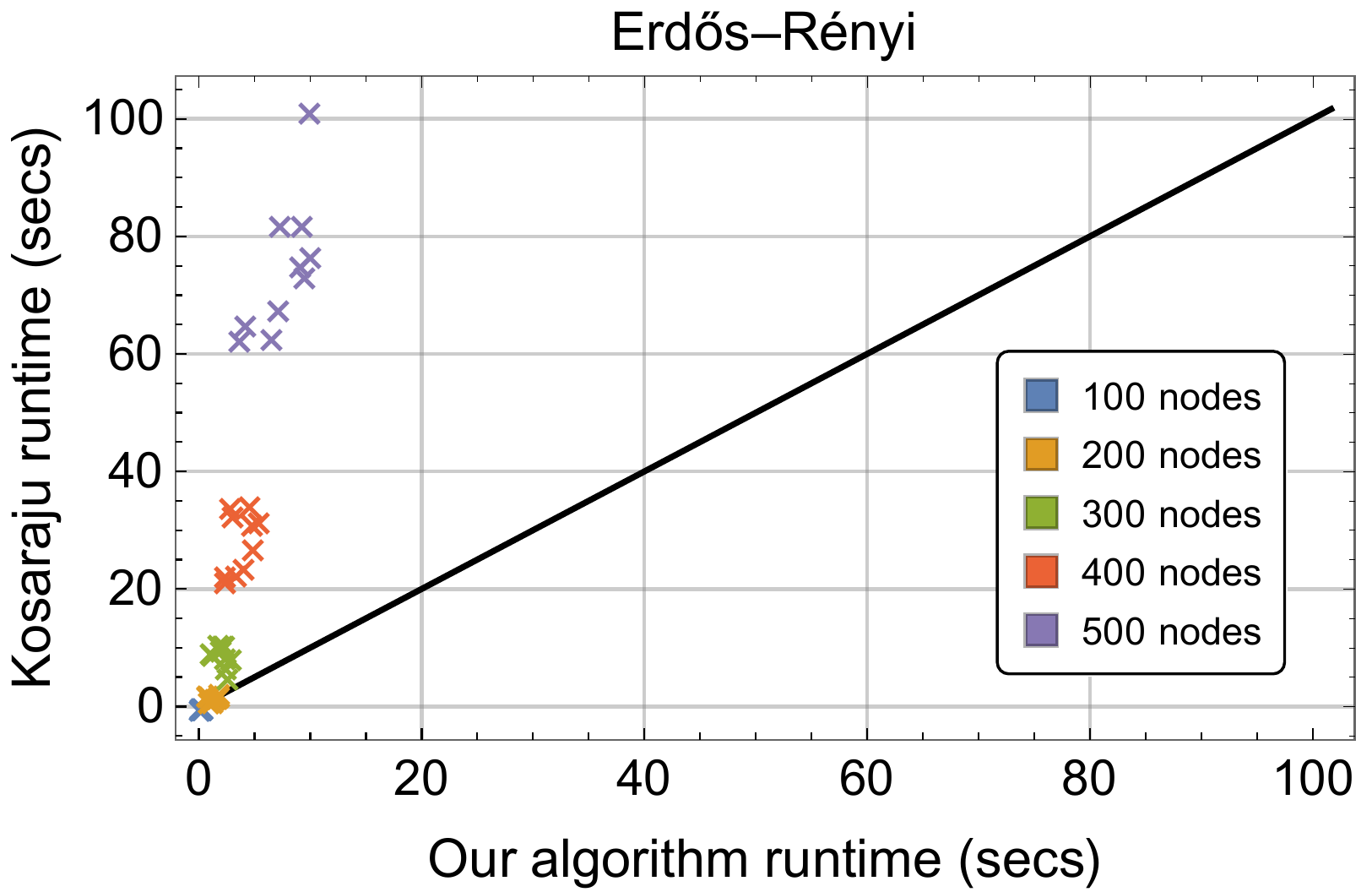}
    \caption{This figure compares the run times of  both our proposed algorithm and Kosaraju's algorithm for several randomly generated Erdős-Rényi networks. We see that our algorithm performs better on networks with a higher number of nodes.}
    \label{fig:ERruntimesparam1}
\end{figure}

\begin{figure}[H]
    \centering
    \subfigure[]{
    \includegraphics[width=.46\linewidth]{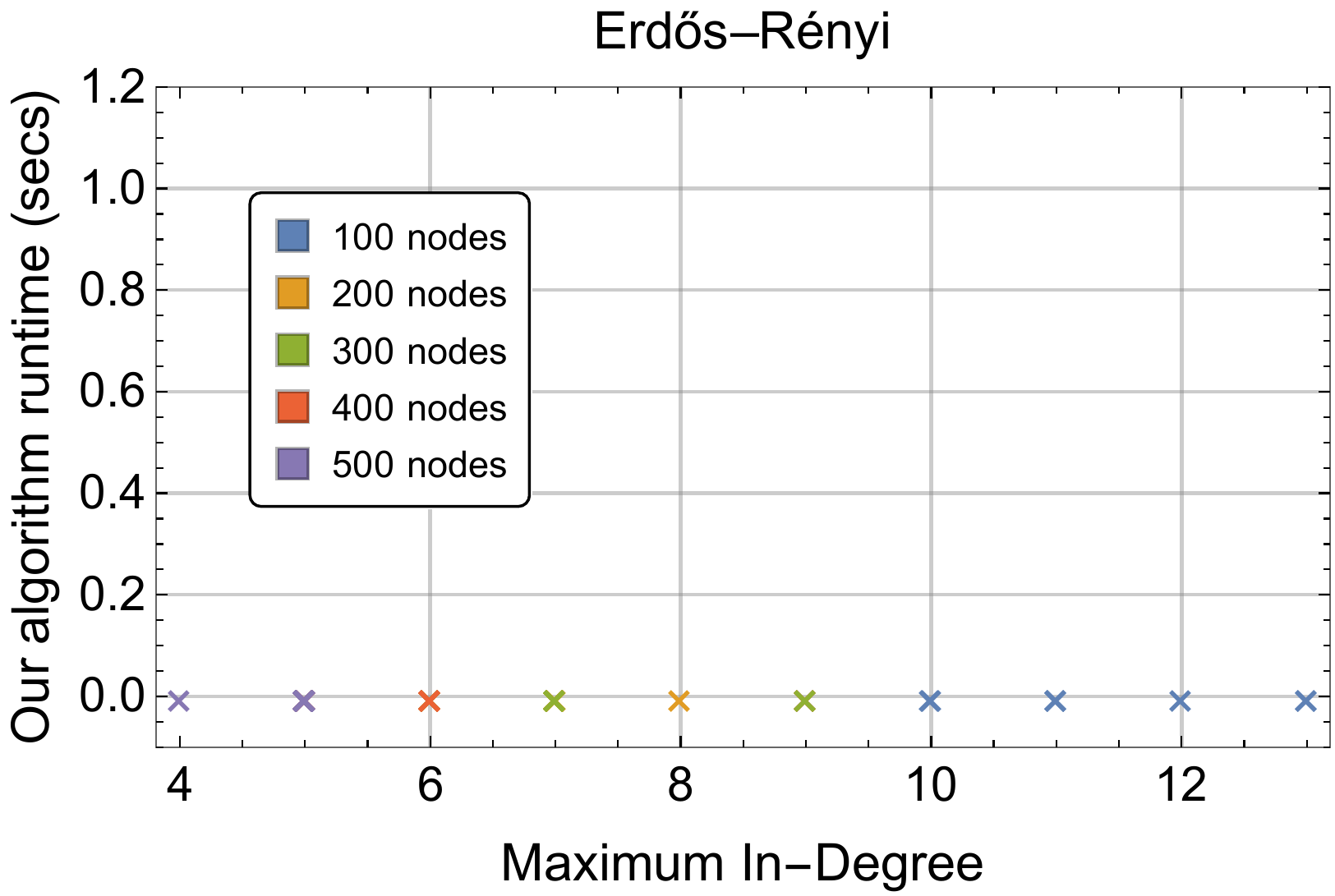}
    }
    \subfigure[]{
    \includegraphics[width=.46\linewidth]{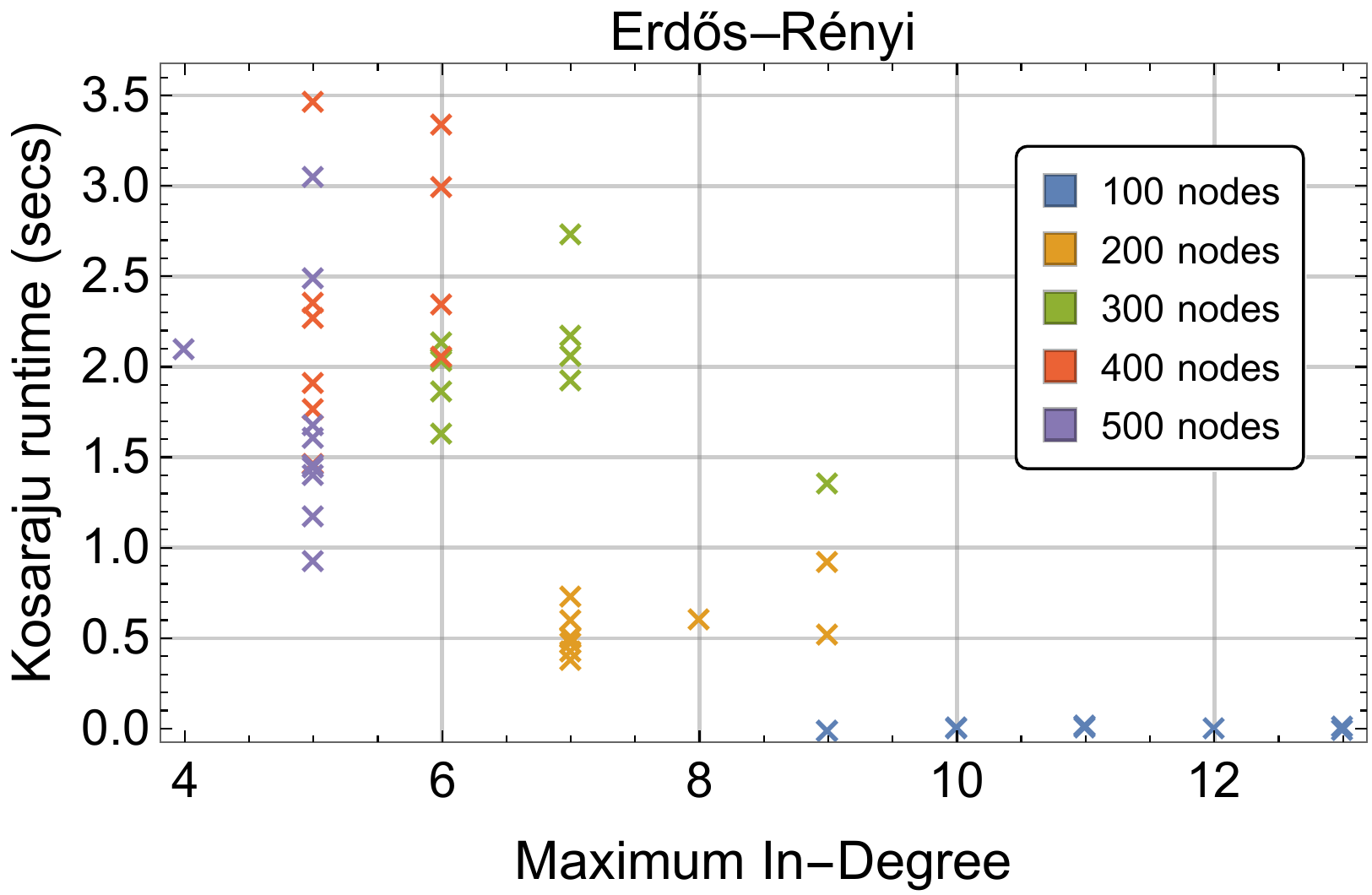}
    }
    \subfigure[]{
    \includegraphics[width=.46\linewidth]{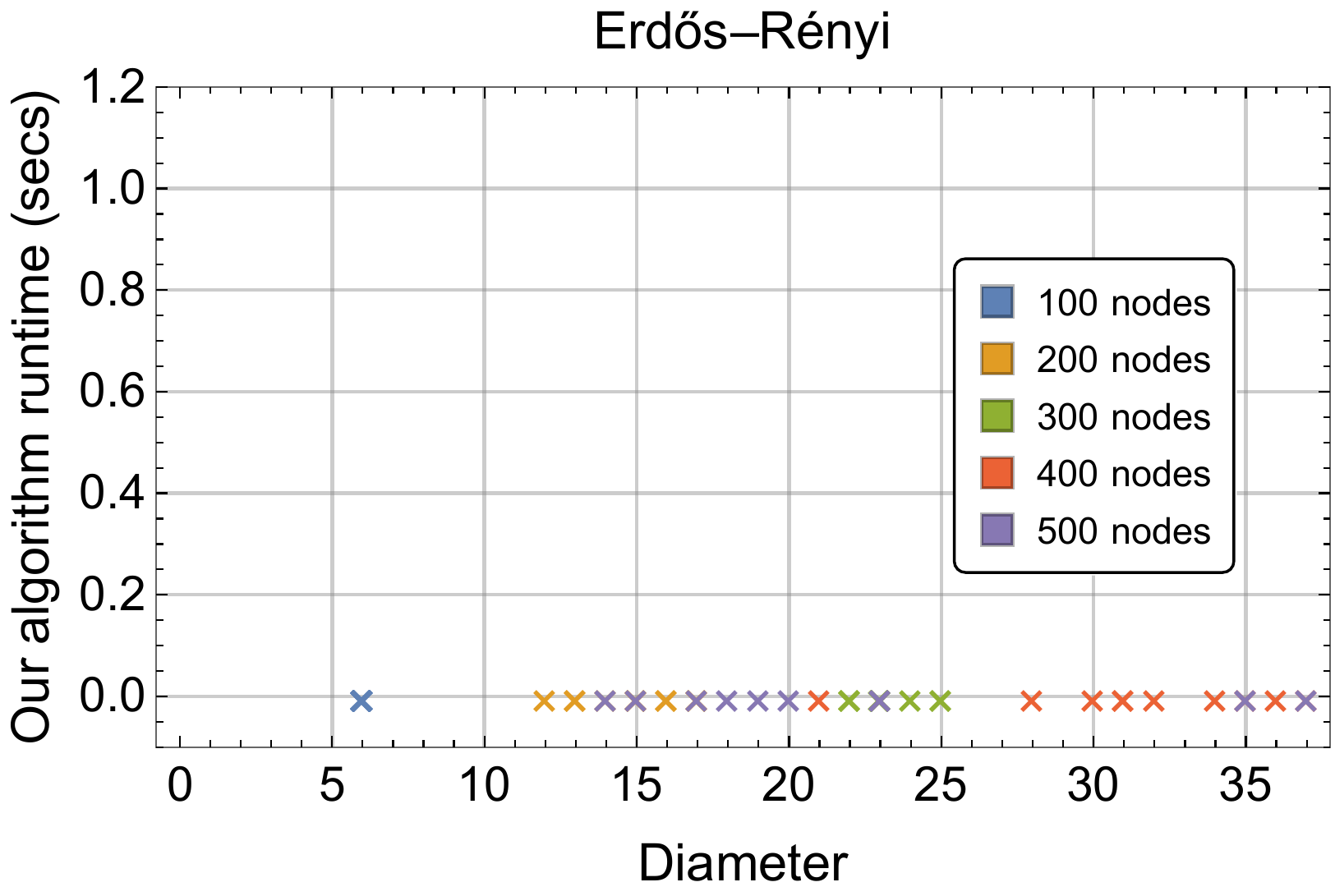}
    }
    \subfigure[]{
    \includegraphics[width=.46\linewidth]{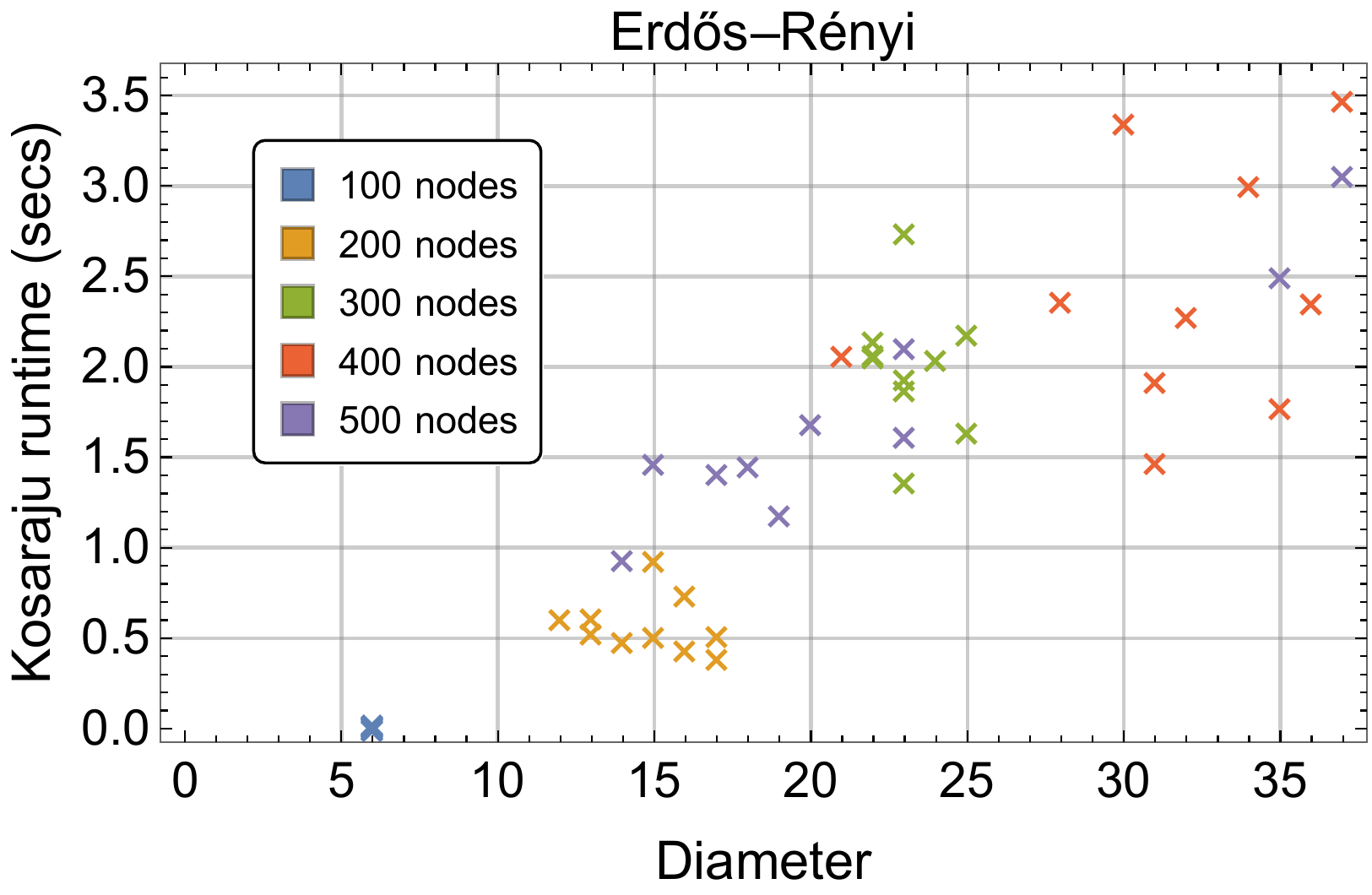}
    }
    \subfigure[]{
    \includegraphics[width=.46\linewidth]{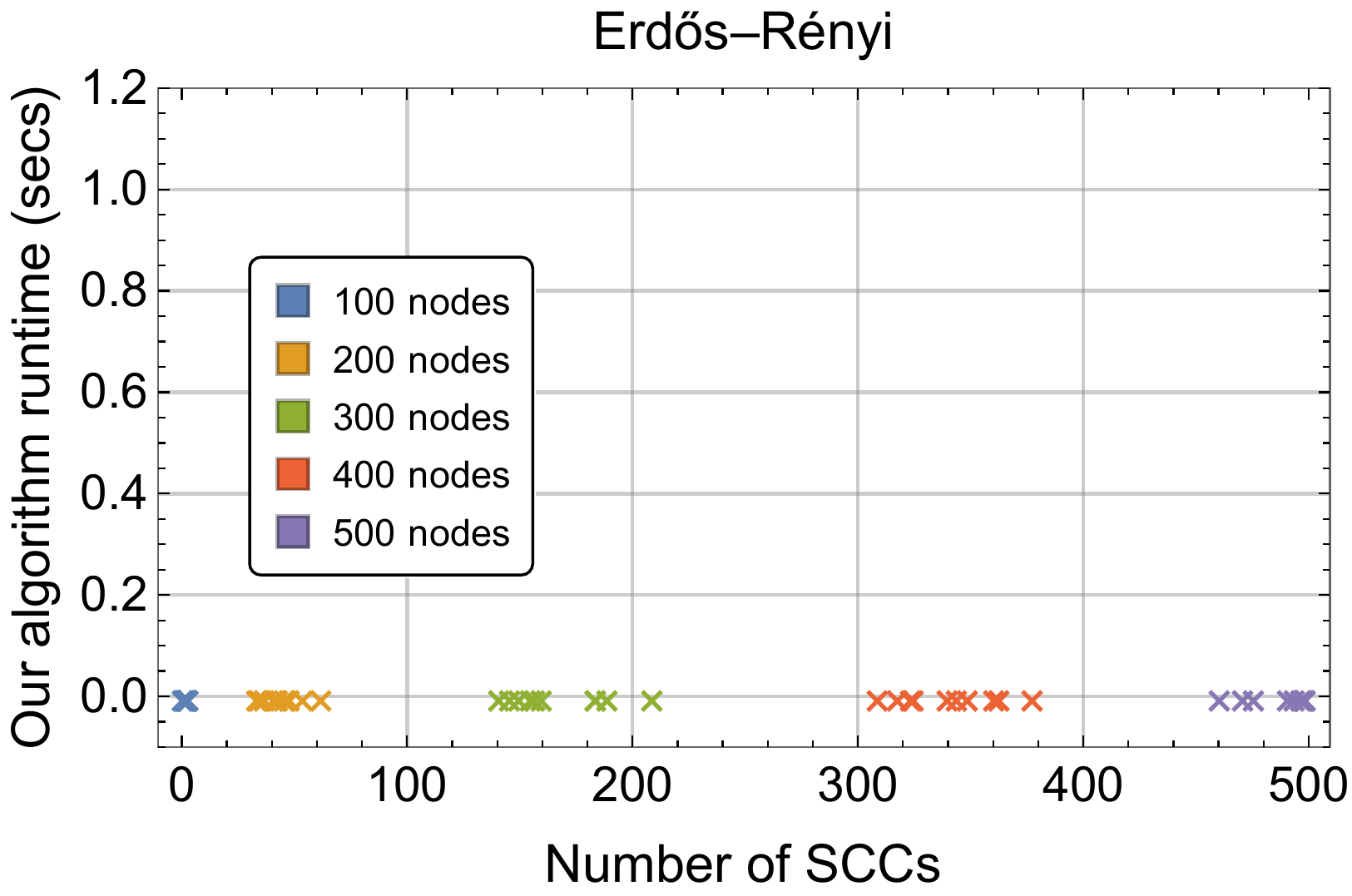}
    }
    \subfigure[]{
    \includegraphics[width=.46\linewidth]{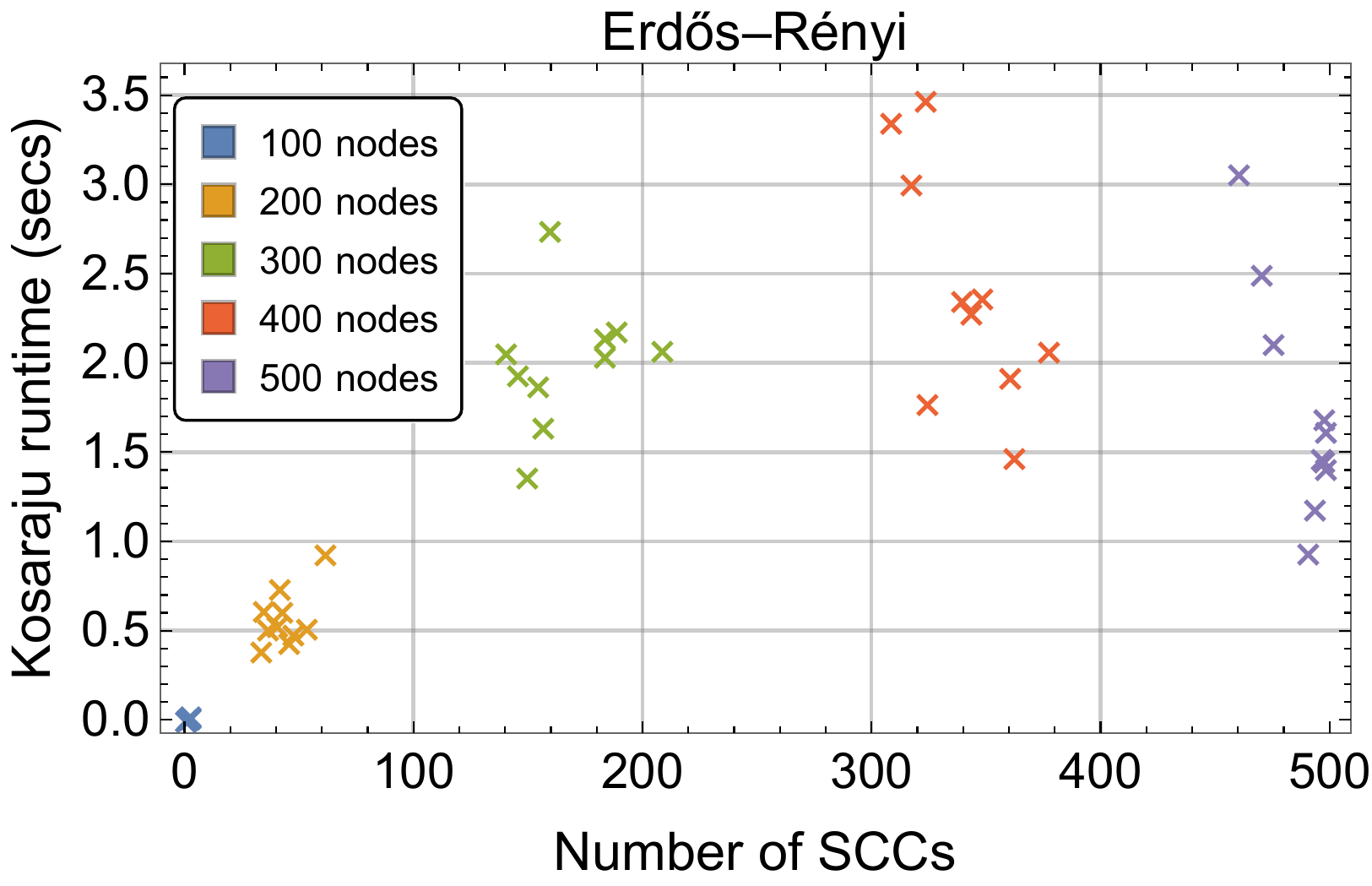}
    }
    \caption{These figures show the relationship between the network properties of some randomly generated Erdős-Rényi networks and their run times for both our proposed algorithm and Kosaraju's algorithm.}
    \label{fig:ERparam2}
\end{figure}

In Figure \ref{fig:ERparam1}, we see the comparison between different properties of the network, including the maximum in-degree, diameter, and total number of SCCs, with the run times of both our algorithm and Kosaaraju's algorithm. With this set of parameters, the maximum in-degree plays a much larger role in determining the run-time of the algorithm. As such, the centralized algorithm was run instead of the distributed algorithm because in order to run the algorithm in parallel, we have to create a shared memory data structure, which increases the run time. Because there are only two iterations before terminating,  parallelization is not computationally viable in this case. In other words, parallelization becomes advantageous when the diameter of the network is large.  

In Figure \ref{fig:ERruntimesparam1}, we see the comparison between the run times of both our algorithm and Kosaraju's algorithm on the different randomly generated Erdős-Rényi networks. Our algorithm performs better overall, especially for networks with more nodes. 
\begin{figure}[H]
    \centering
    \includegraphics[width =0.46 \linewidth]{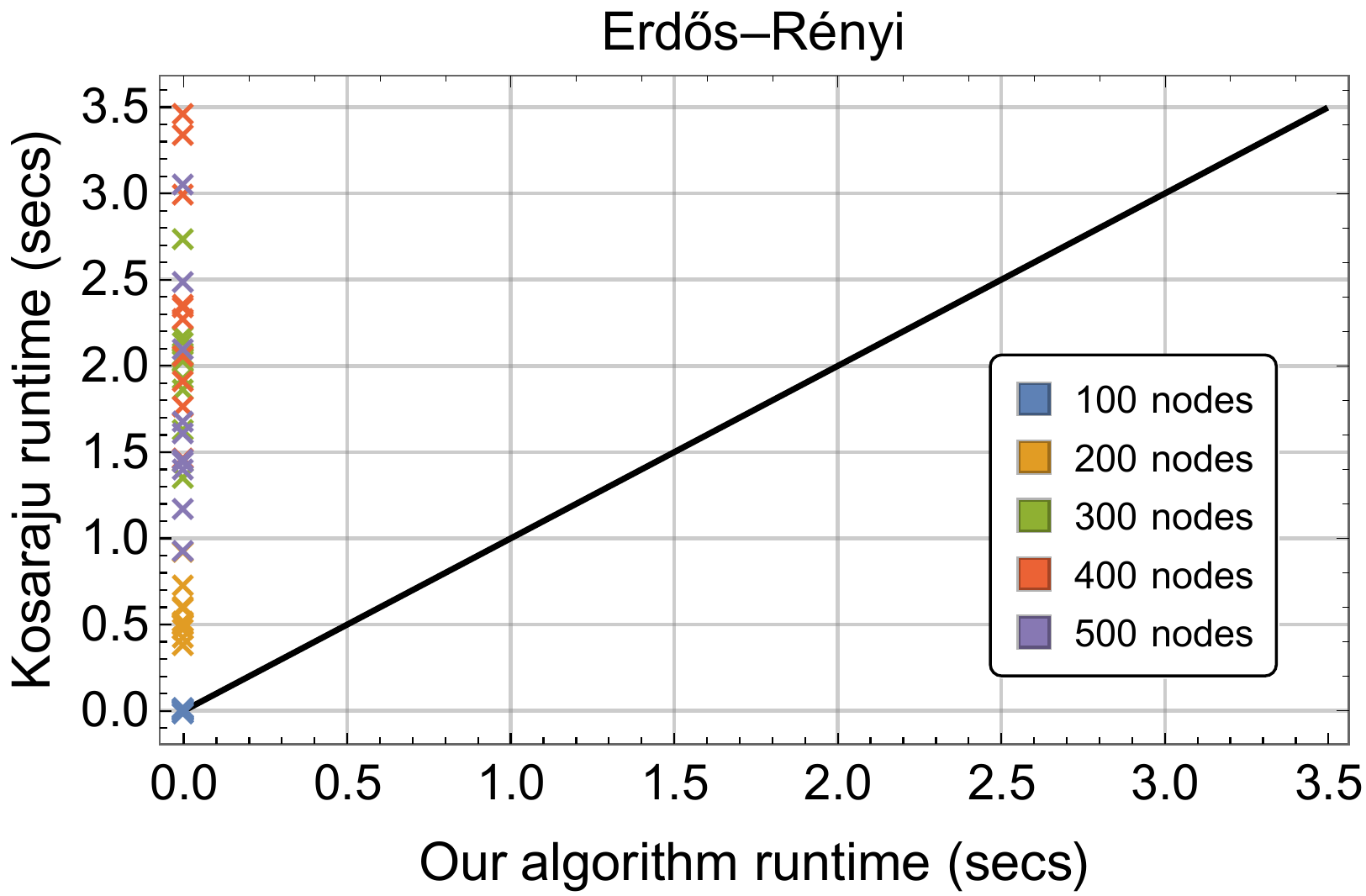}
    \caption{This figure compares the run times of  both our proposed algorithm and Kosaraju's algorithm for several randomly generated Erdős-Rényi networks. We see that our algorithm performs better on networks with a higher number of nodes.}
    \label{fig:ERruntimesparam2}
\end{figure}

The results from the second set of parameters for Erdős-Rényi networks are shown in Figures \ref{fig:ERparam2} and \ref{fig:ERruntimesparam2}. In these networks, the diameter is much larger, so it dominates the runtime of our algorithm. Hence, we used the distributed algorithm to find the SCCs of these networks. Figure \ref{fig:ERruntimesparam2} shows that the runtime of our distributed algorithm is far superior to that of Kosaraju when executed on the same \mbox{Erdős-Rényi} random networks. 

\subsection{Barabási-Albert}


\begin{figure}[H]
    \centering
    \subfigure[]{
    \includegraphics[width=.46\linewidth]{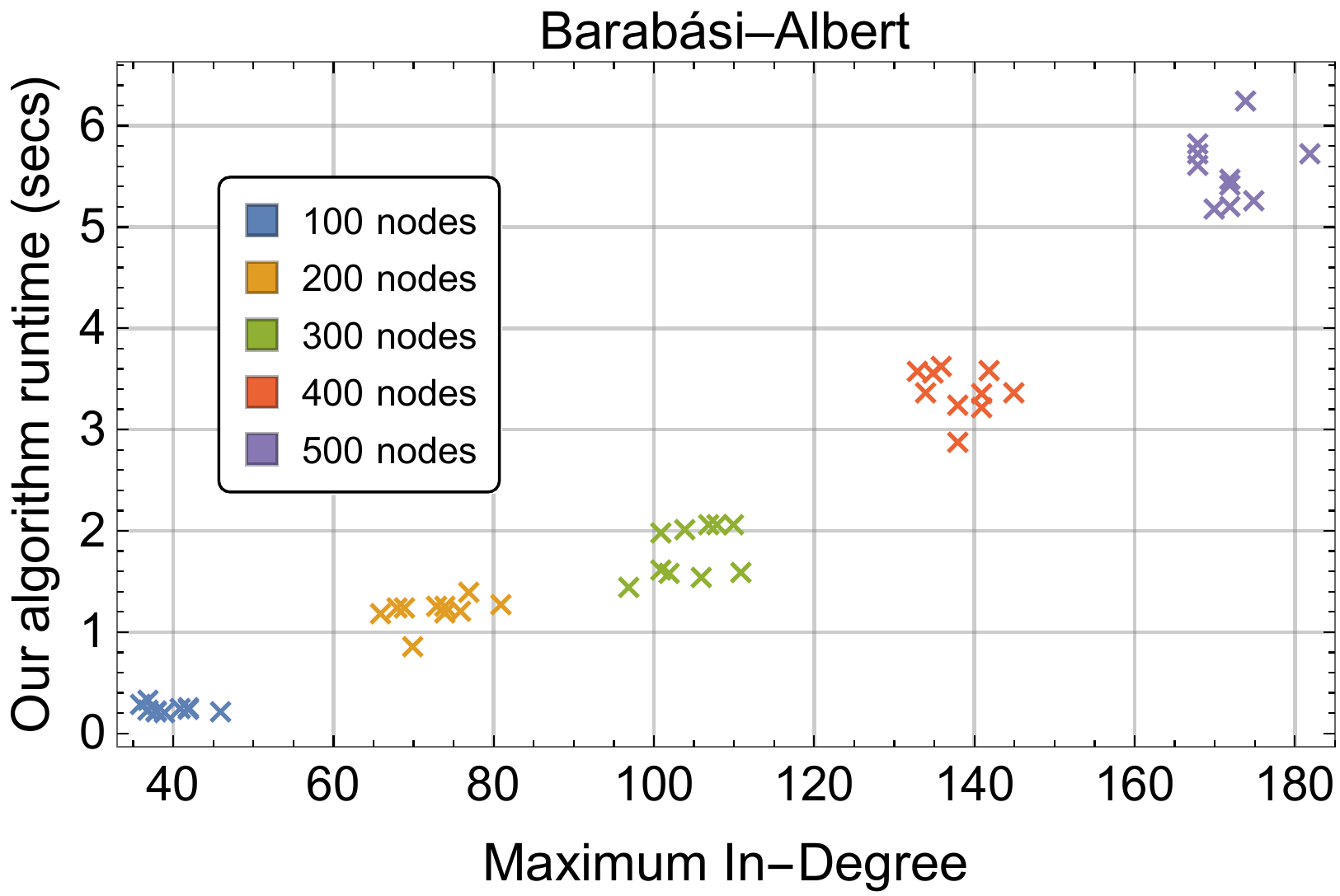}
    }
    \subfigure[]{
    \includegraphics[width=.46\linewidth]{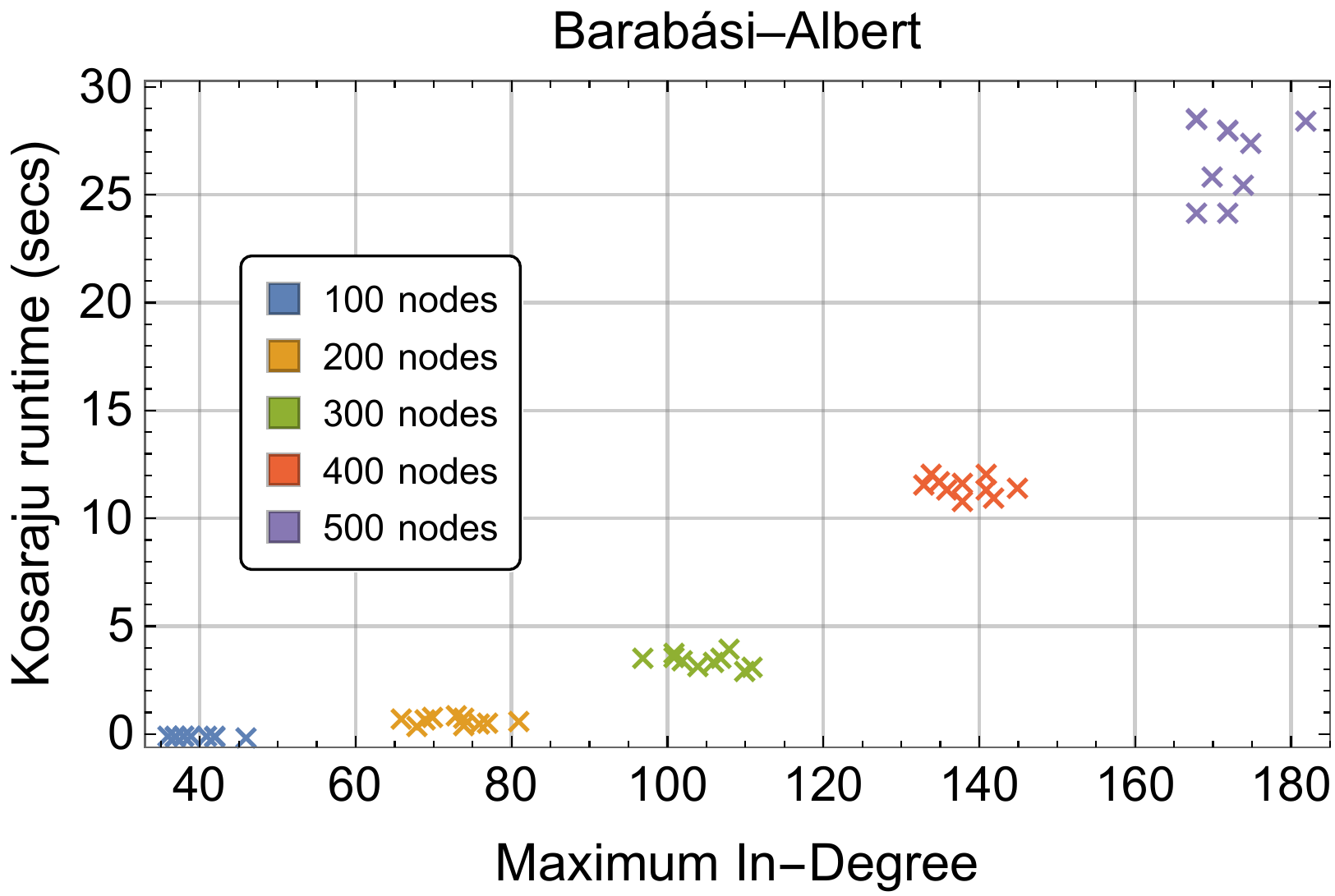}
    }
    \subfigure[]{
    \includegraphics[width=.46\linewidth]{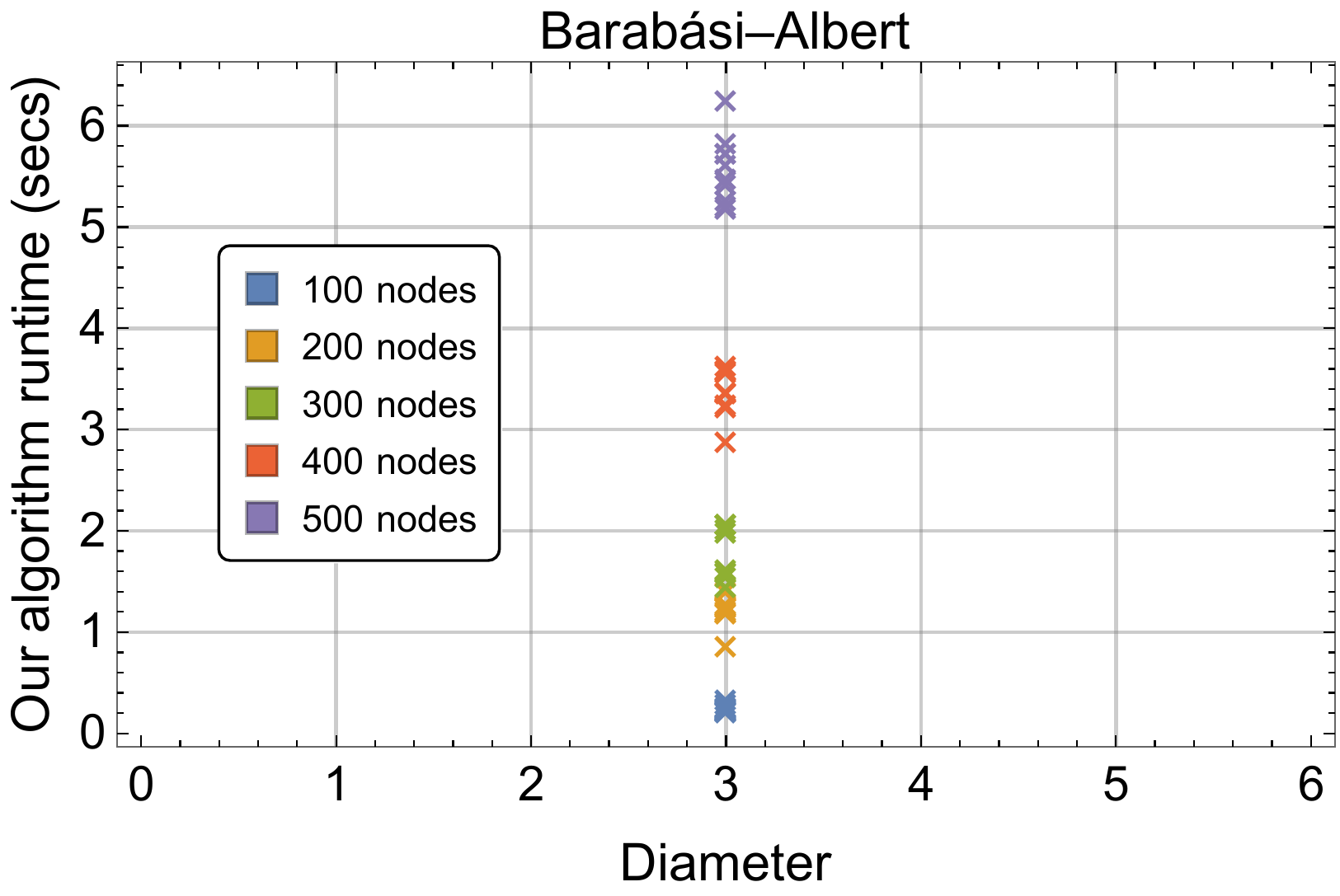}
    }
    \subfigure[]{
    \includegraphics[width=.46\linewidth]{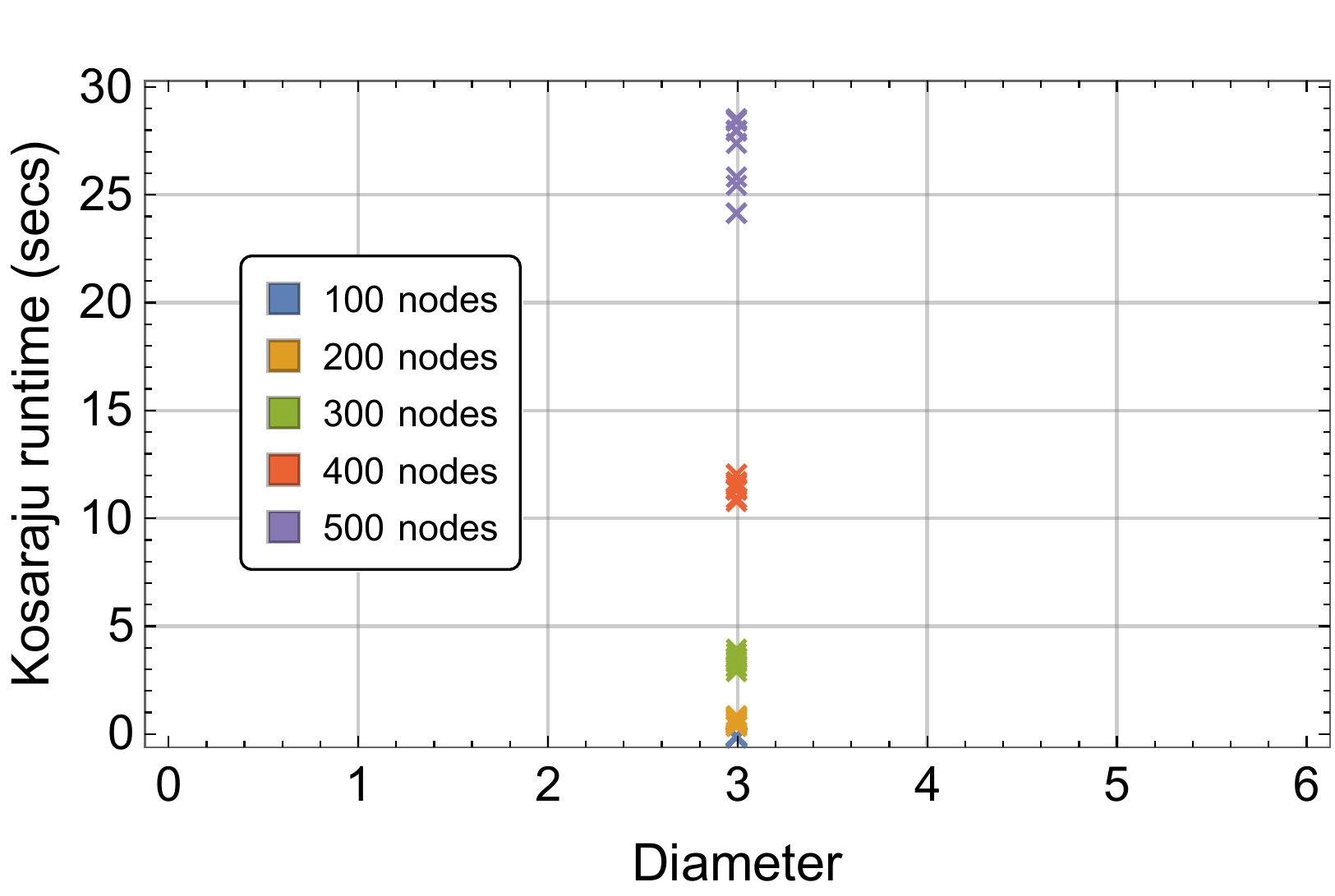}
    }
    \subfigure[]{
    \includegraphics[width=.46\linewidth]{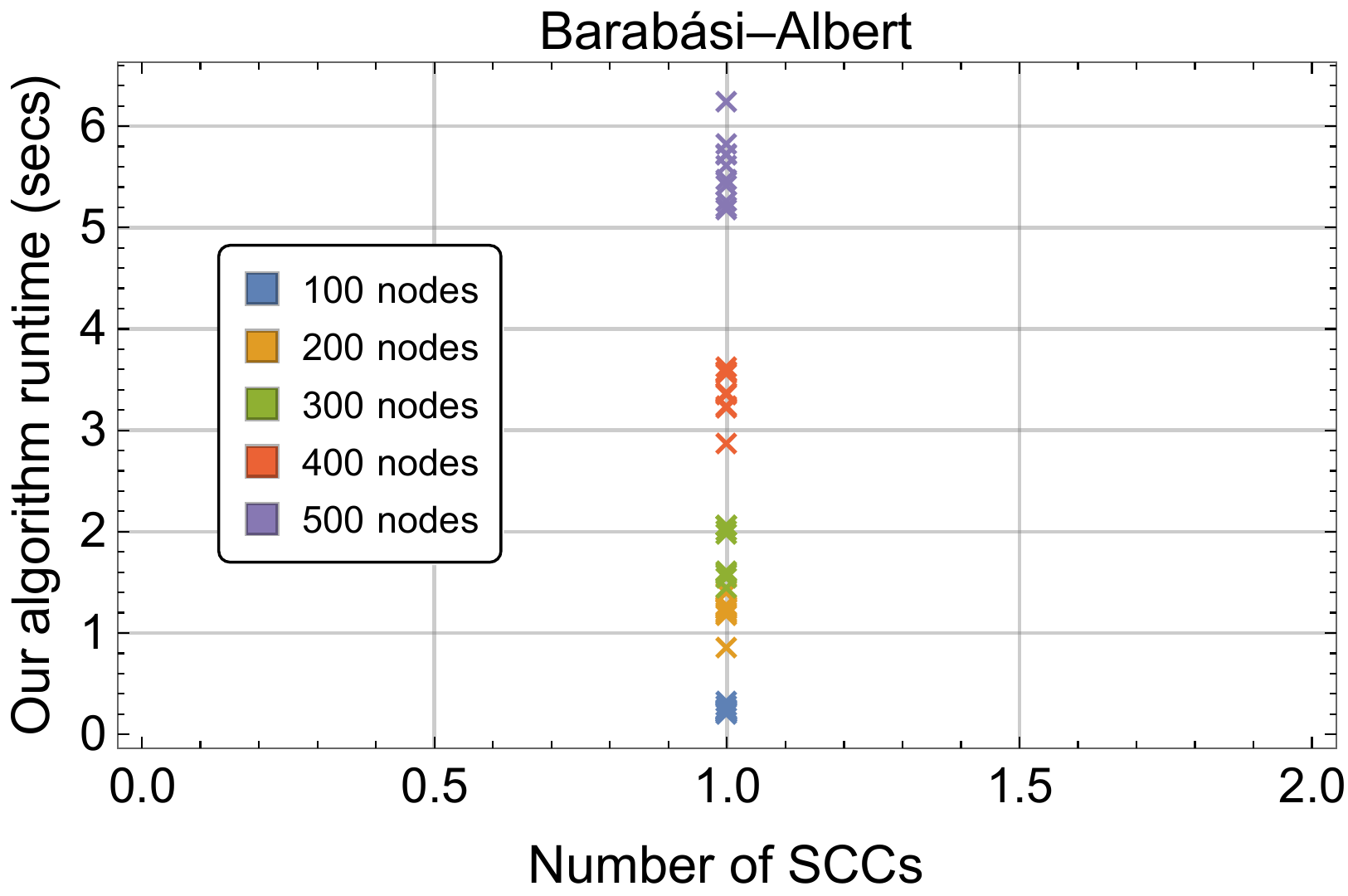}
    }
    \subfigure[]{
    \includegraphics[width=.46\linewidth]{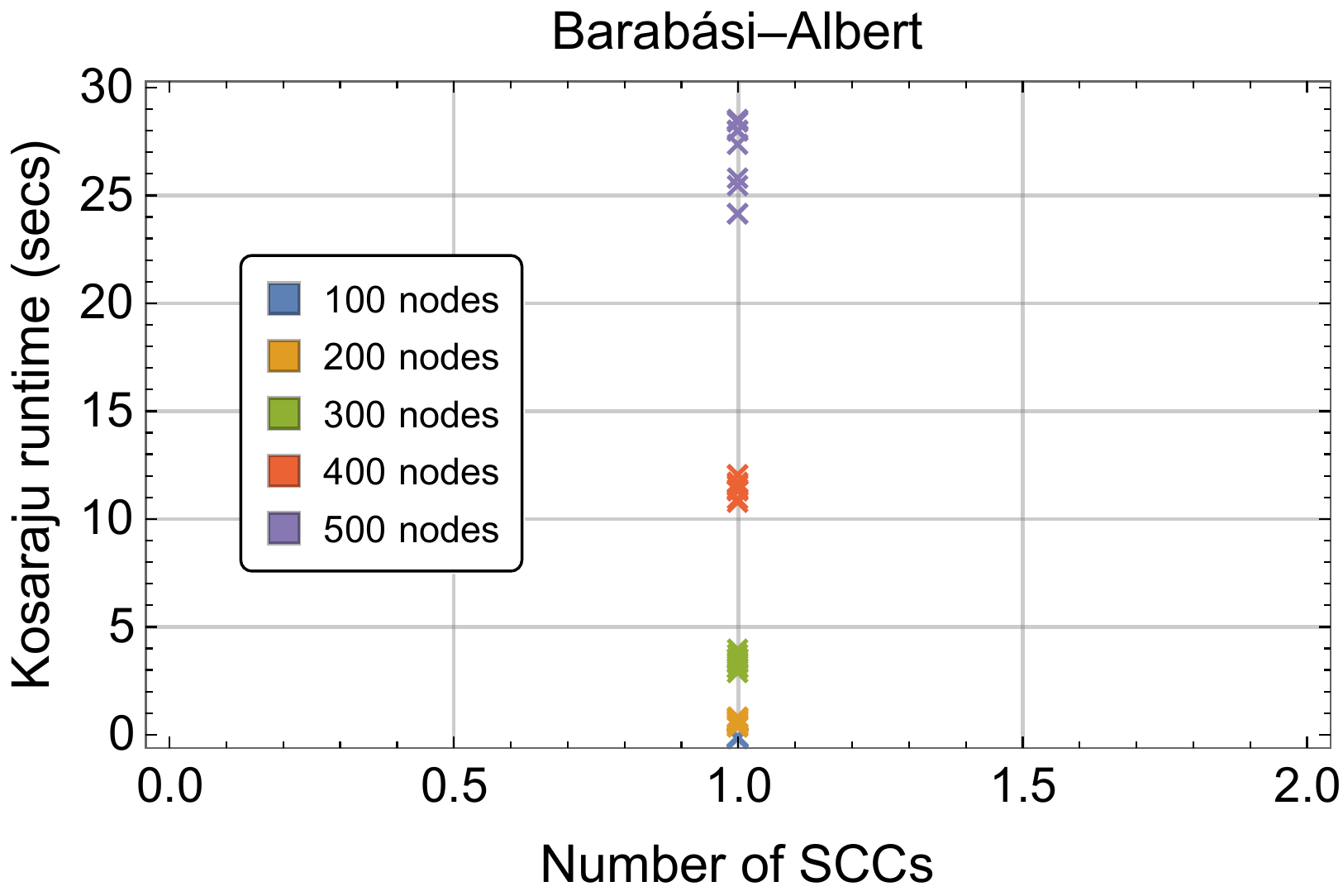}
    }
    \caption{These figures show the relationship between the network properties of some randomly generated Barabási-Albert networks and their run times for both our proposed algorithm and Kosaraju's algorithm.}
    \label{fig:BAparam1}
\end{figure}
The Barabási-Albert networks require two parameters, including the number of nodes and the number of edges added to a new vertex at each step.  

For the first set of parameters (Figures \ref{fig:BAparam1} and \ref{fig:BAruntimesparam1}), the number of nodes were fixed to 100, 200, 300, 400, 500, and the  number of edges were chosen to be the number of nodes divided by 5. In the second set of parameters (Figures \ref{fig:BAparam2} and \ref{fig:BAruntimesparam2}), again the number of nodes remained the same, but the number of edges was fixed to 50 for all the sets of nodes.

In Figure \ref{fig:BAparam1}, we see that the maximum in-degree plays a much larger role in determining the run-time of the algorithm. As such, the centralized algorithm was run instead of the distributed algorithm.  

\begin{figure}[H]
    \centering
    \includegraphics[width =0.46 \linewidth]{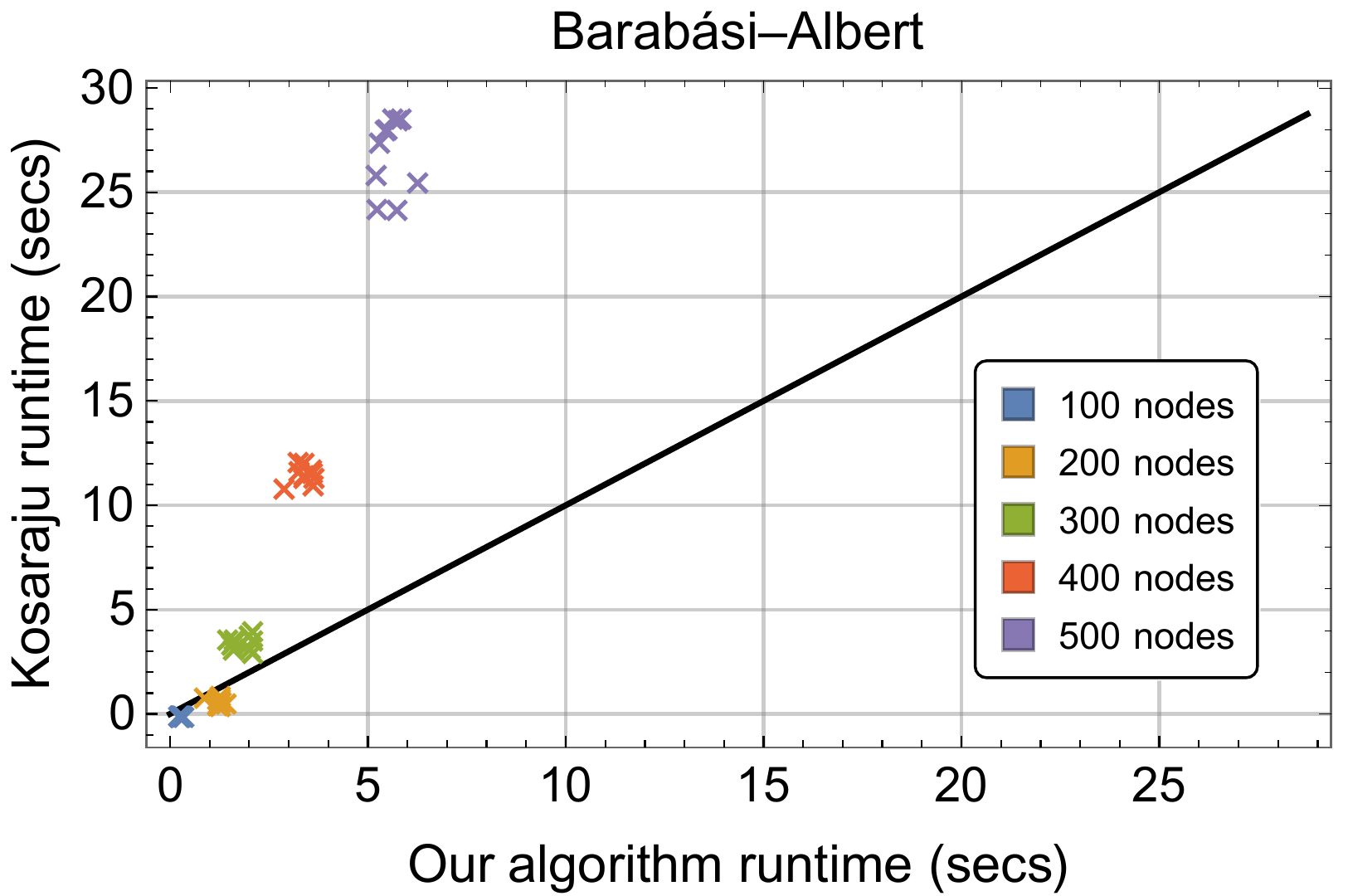}
    \caption{This figure compares the run times of  both our proposed algorithm and Kosaraju's algorithm for several randomly generated Barabási-Albert networks. We see that our algorithm performs better on networks with a higher number of nodes.}
    \label{fig:BAruntimesparam1}
\end{figure}

In Figure \ref{fig:BAruntimesparam1}, we see the comparison between the run times of both our algorithm and Kosaraju's algorithm on the different randomly generated Barabási-Albert networks. Even with the centarlized algorithm, our method performs better for networks with more nodes. 

\begin{figure}[H]
    \centering
    \subfigure[]{
    \includegraphics[width=.46\linewidth]{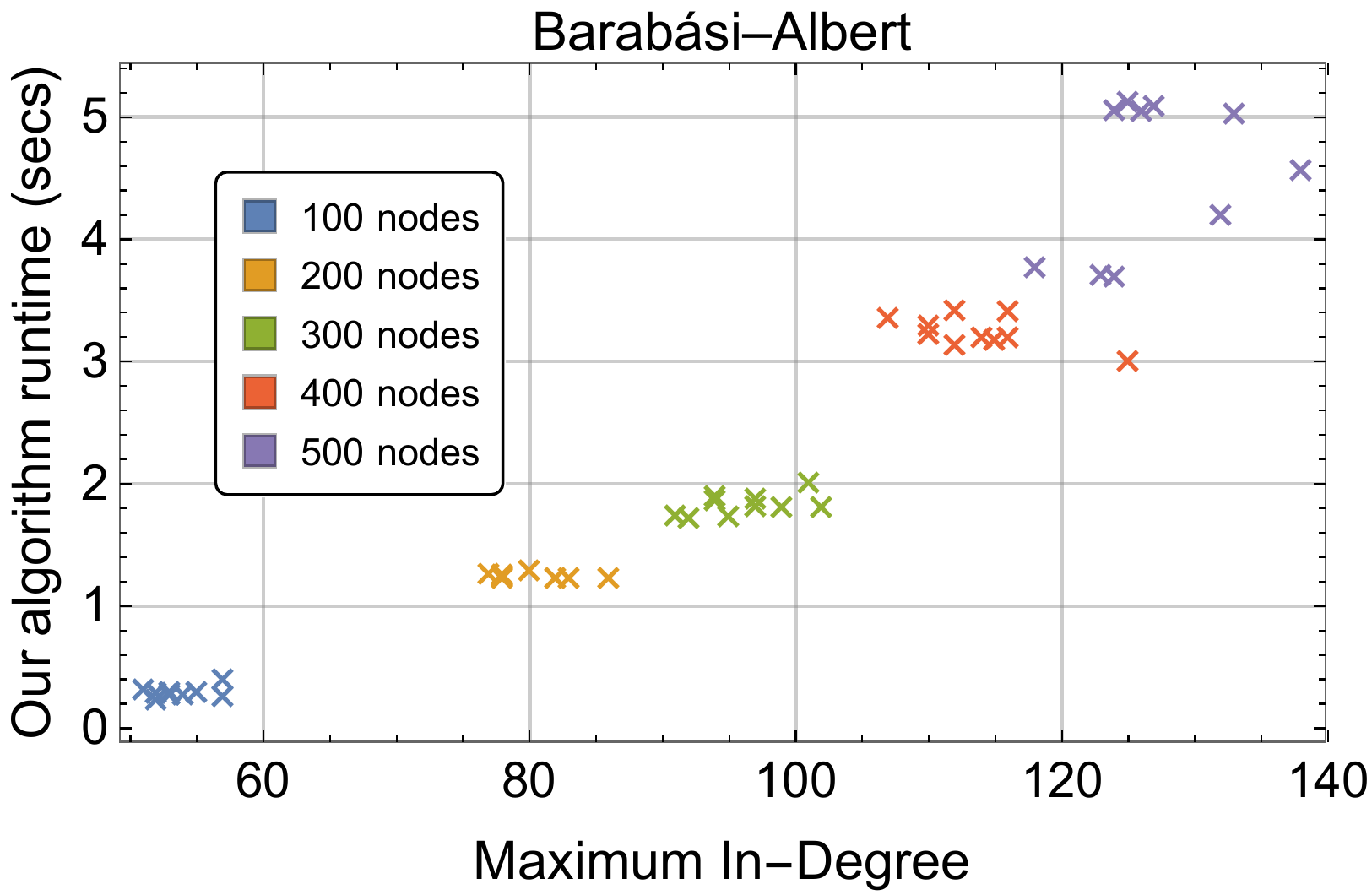}
    }
    \subfigure[]{
    \includegraphics[width=.46\linewidth]{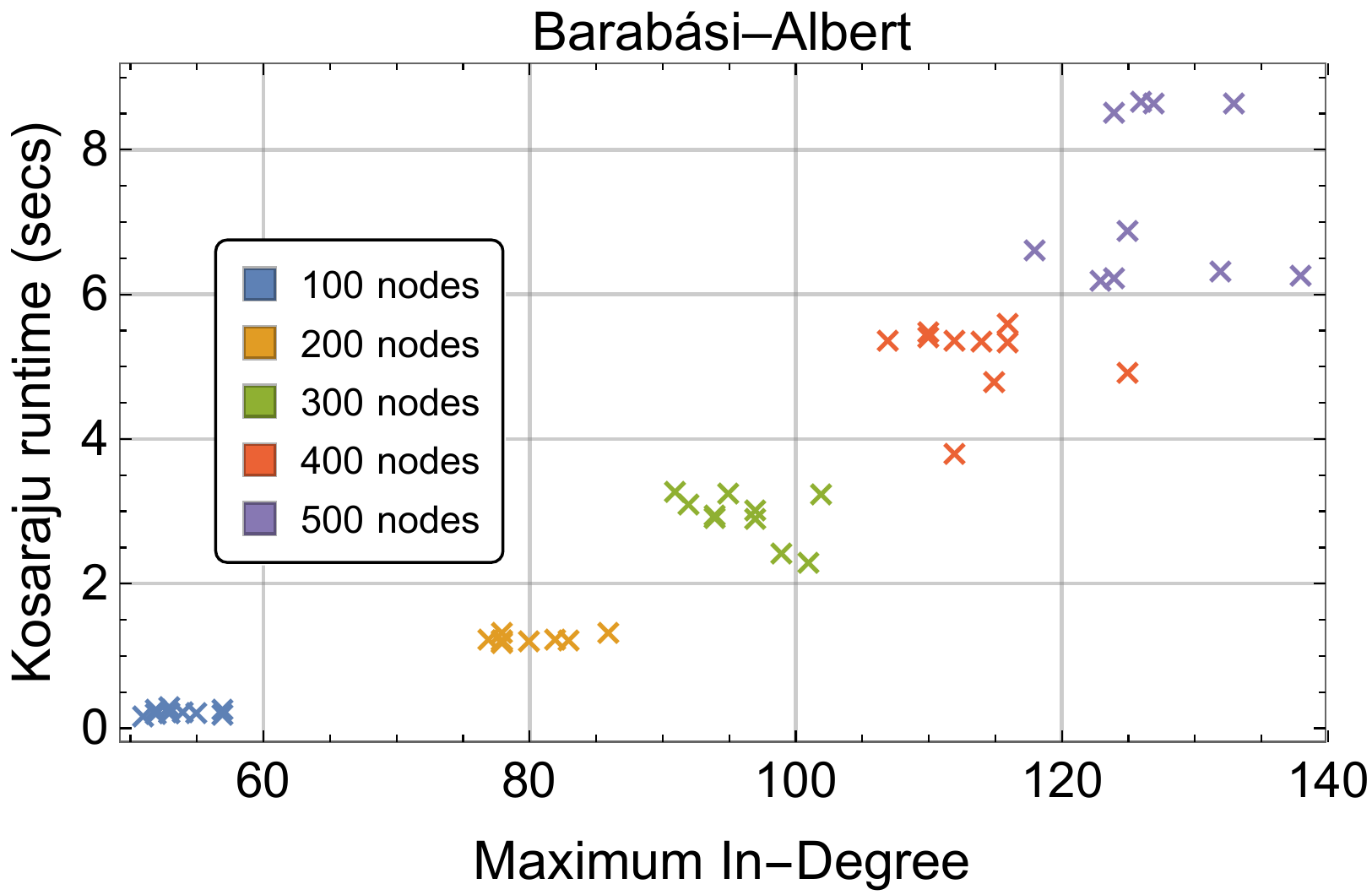}
    }
    \subfigure[]{
    \includegraphics[width=.46\linewidth]{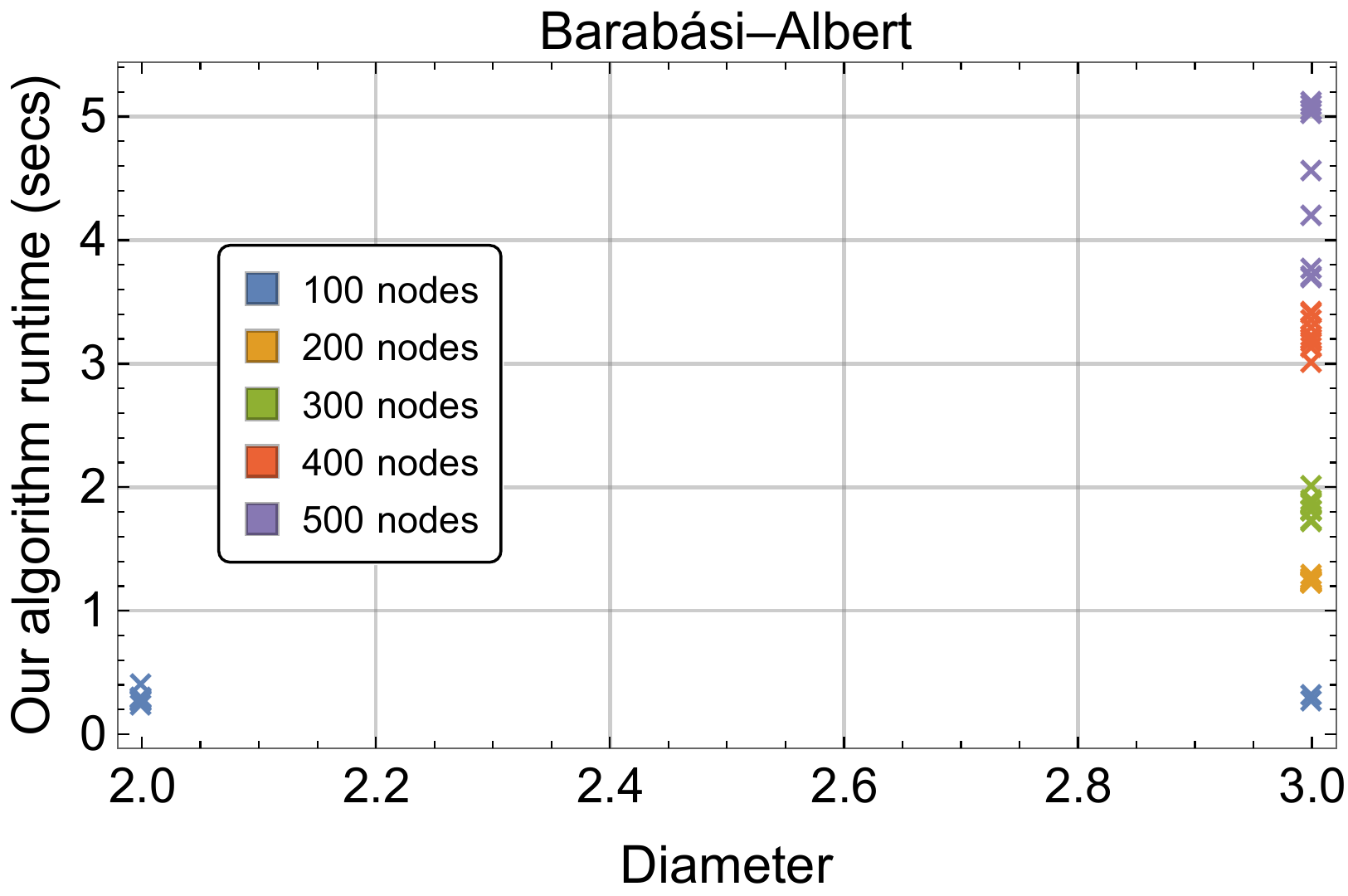}
    }
    \subfigure[]{
    \includegraphics[width=.46\linewidth]{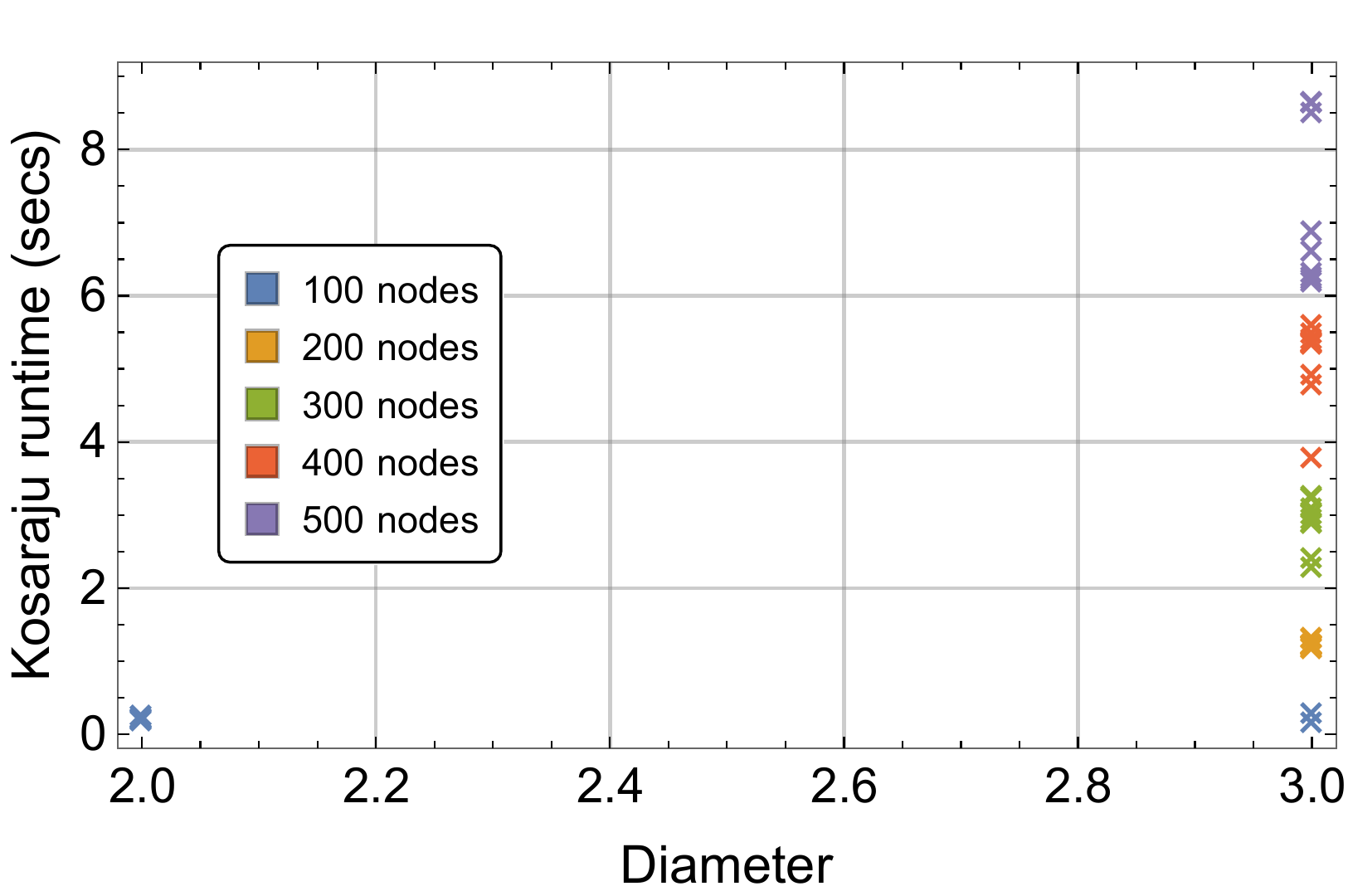}
    }
    \subfigure[]{
    \includegraphics[width=.46\linewidth]{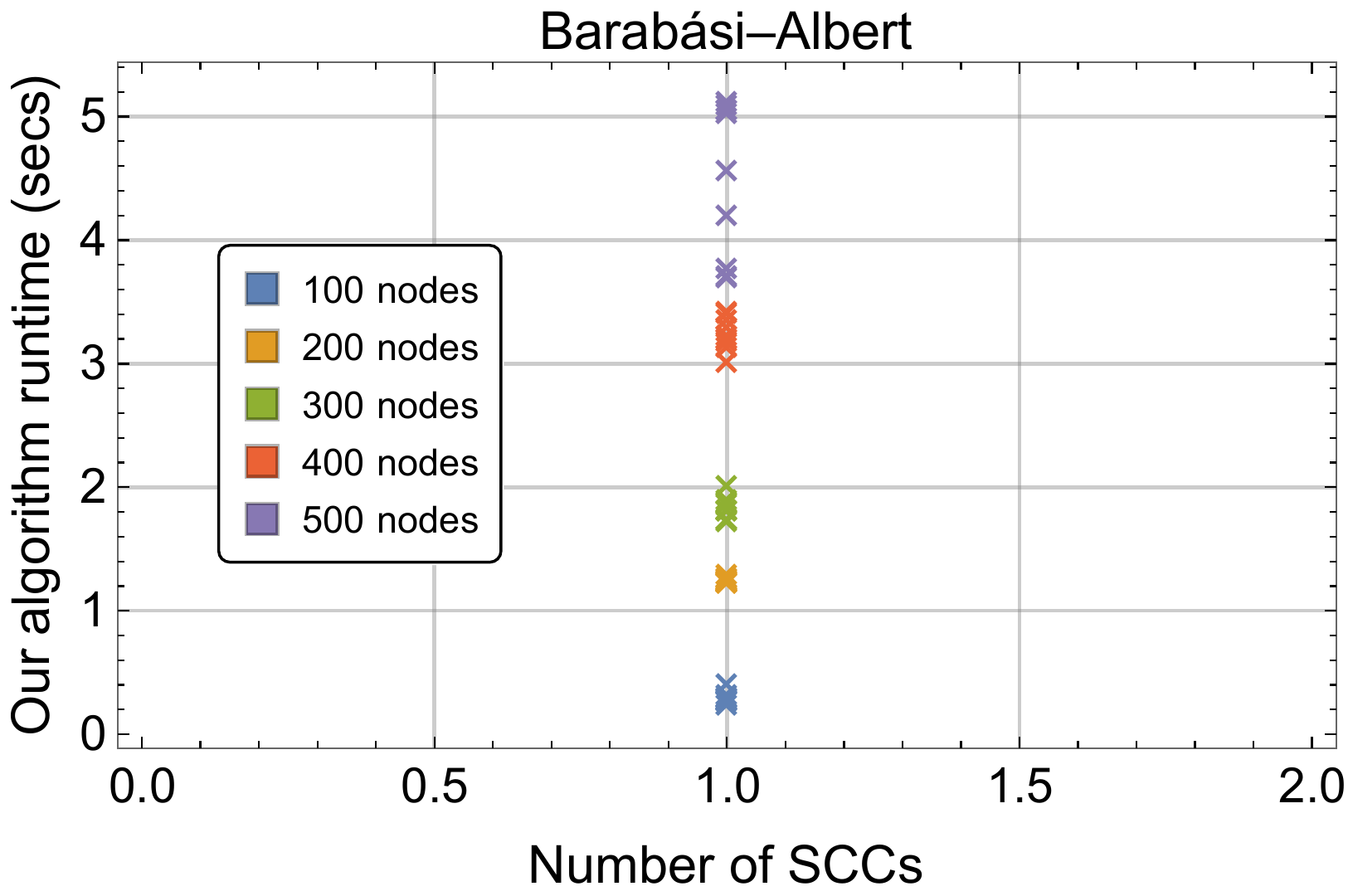}
    }
    \subfigure[]{
    \includegraphics[width=.46\linewidth]{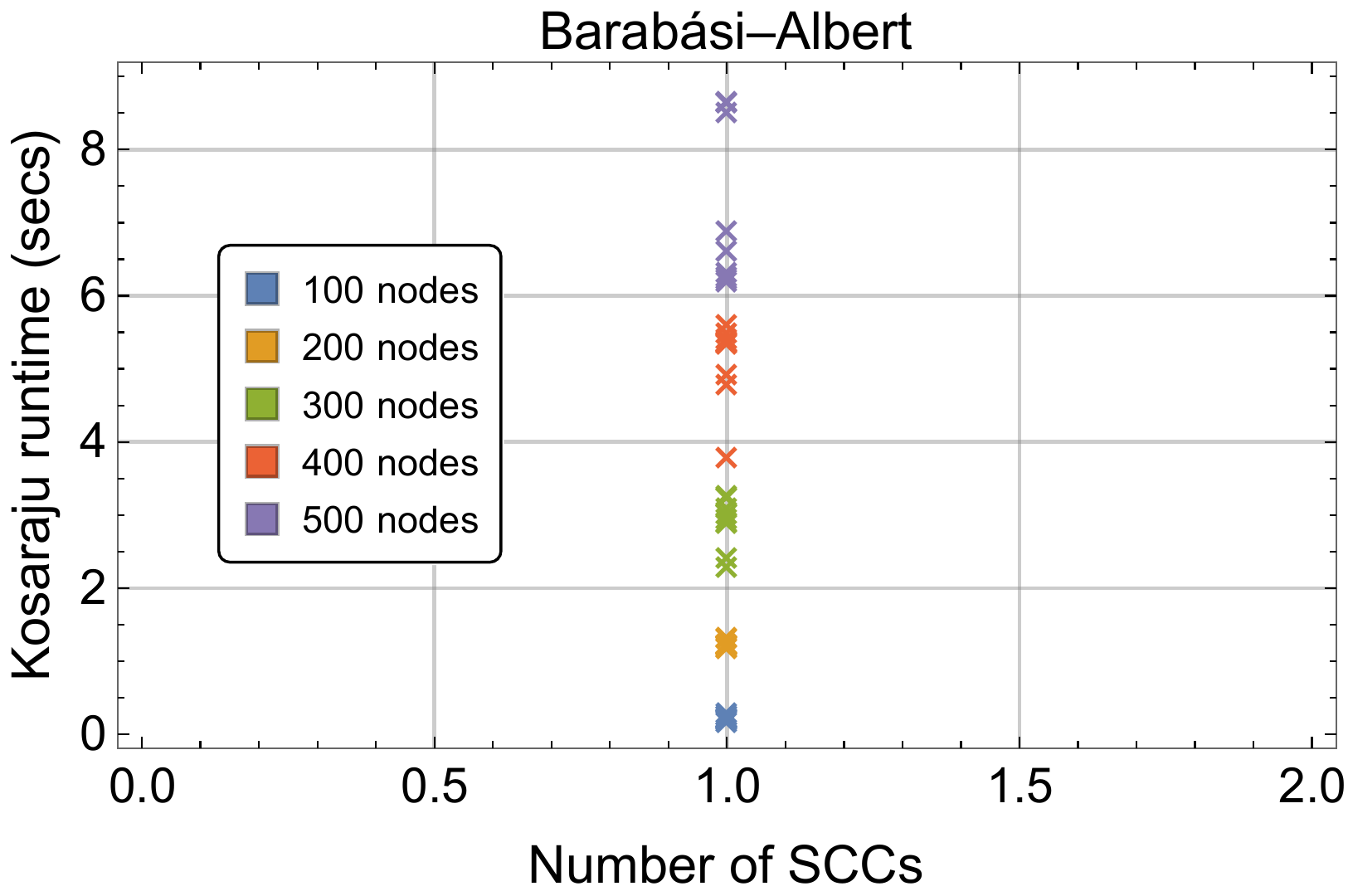}
    }
    \caption{These figures show the relationship between the network properties of some randomly generated Barabási-Albert networks and their run times for both our proposed algorithm and Kosaraju's algorithm.}
    \label{fig:BAparam2}
\end{figure}

The results from the second set of parameters for Barabási–Albert networks are shown in Figures \ref{fig:BAparam2} and \ref{fig:BAruntimesparam2}. The results from the two sets of parameters do not present much difference. It is clear that our algorithm outperforms the Kosaraju algorithm when there is a large number of nodes.

\begin{figure}[H]
    \centering
    \includegraphics[width =0.46 \linewidth]{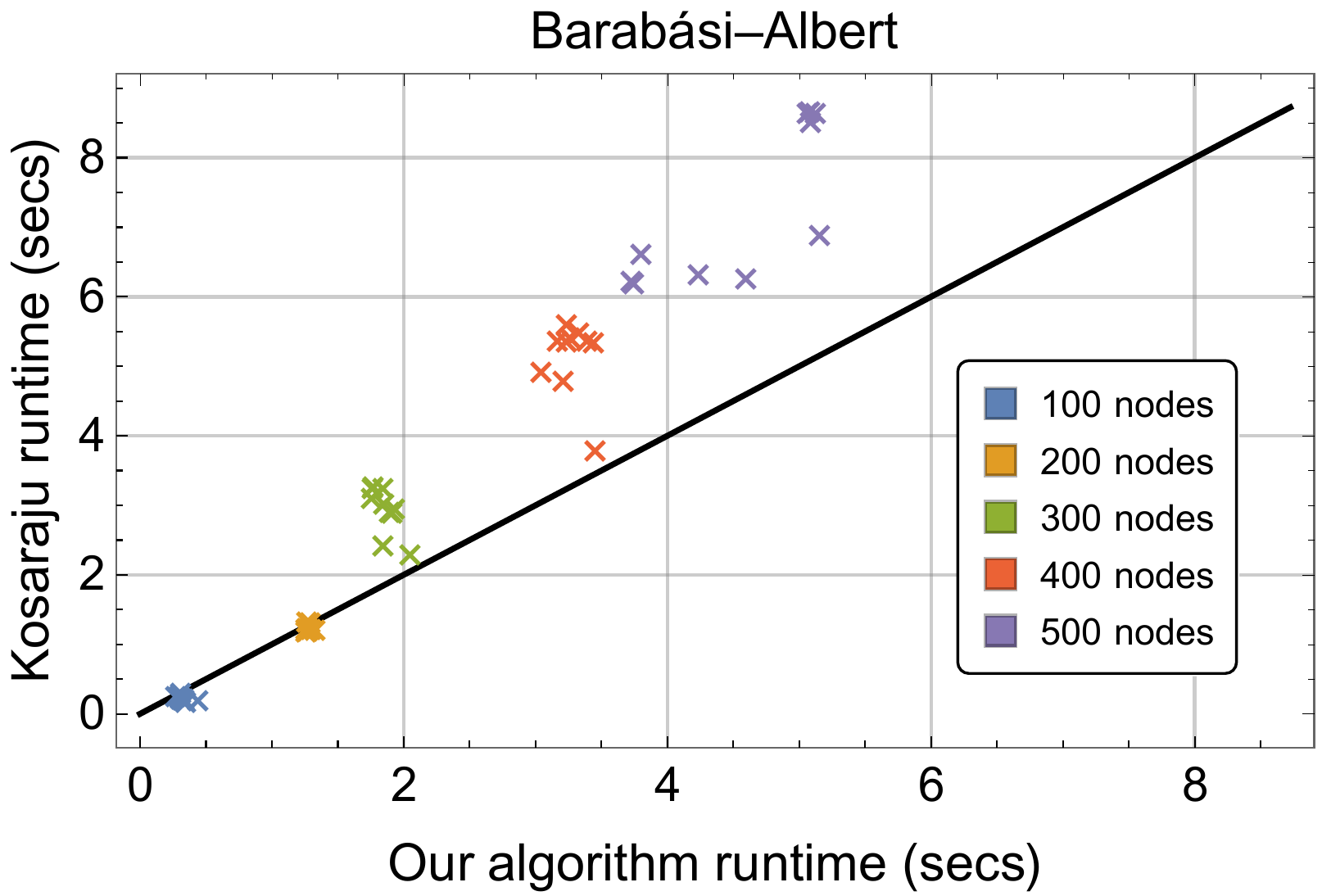}
    \caption{This figure compares the run times of  both our proposed algorithm and Kosaraju's algorithm for different randomly generated Barabási-Albert networks. We see that our algorithm performs better on networks with a higher number of nodes.}
    \label{fig:BAruntimesparam2}
\end{figure}

\subsection{Watts-Strogatz}

Finally, the Watts-Strogatz networks require the following two parameters, the number of nodes and the linkage probability (i.e., the probability that there is an edge between any two vertices). 

The first set of parameters included the nodes 100, 200, 300, 400, and 500 with a linkage probability of 0.8, and the results are shown in Figures \ref{fig:WSparam1} and \ref{fig:WSruntimesparam1}. For the second set of parameters, the set of nodes remain the same and the linkage probability is reduced to 0.2 with the results shown in Figures \ref{fig:WSparam2} and \ref{fig:WSruntimesparam2}. For all of the Watt-Strogatz networks, the distributed algorithm is used.

\begin{figure}[H]
    \centering
    \subfigure[]{
    \includegraphics[width=.46\linewidth]{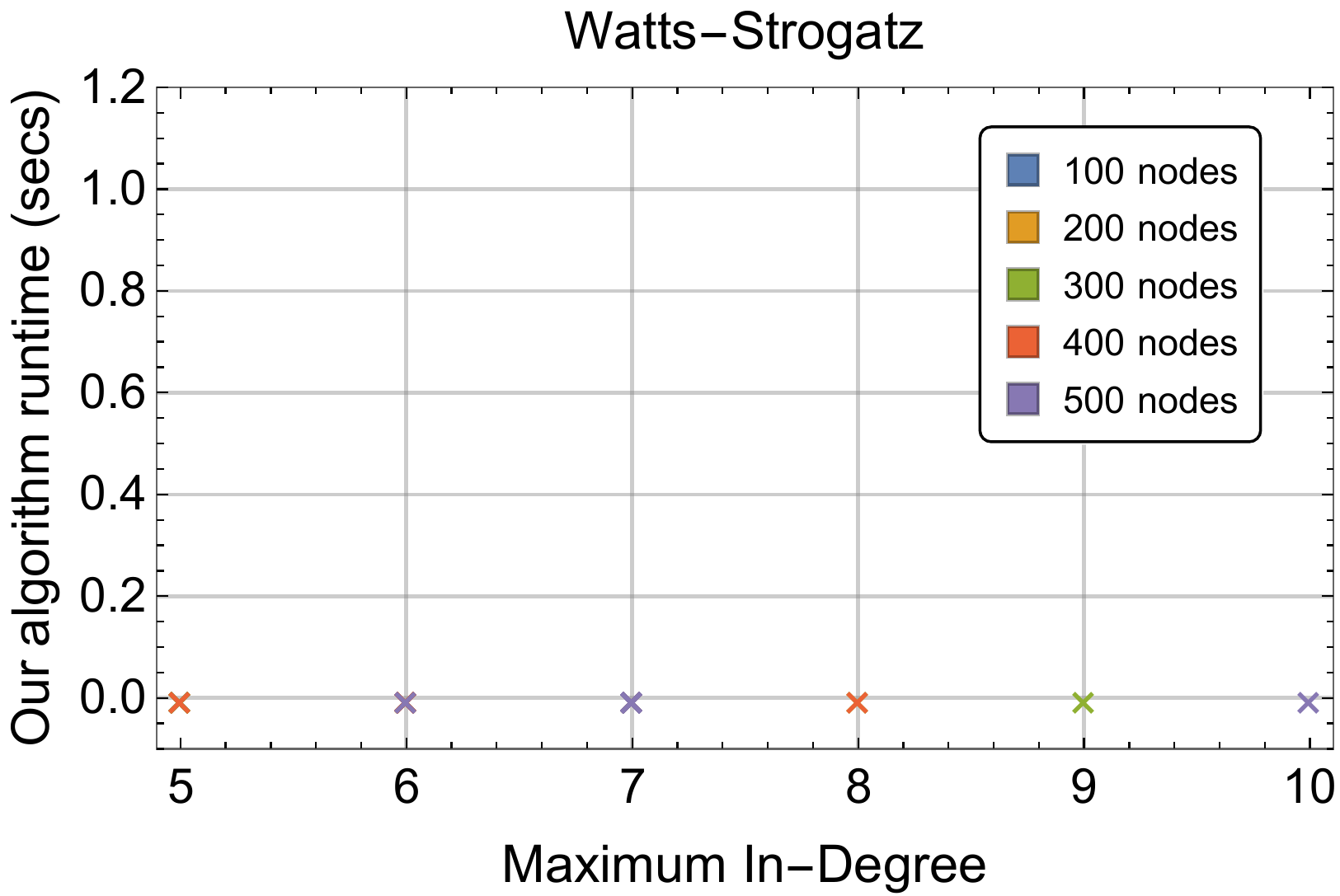}
    }
    \subfigure[]{
    \includegraphics[width=.46\linewidth]{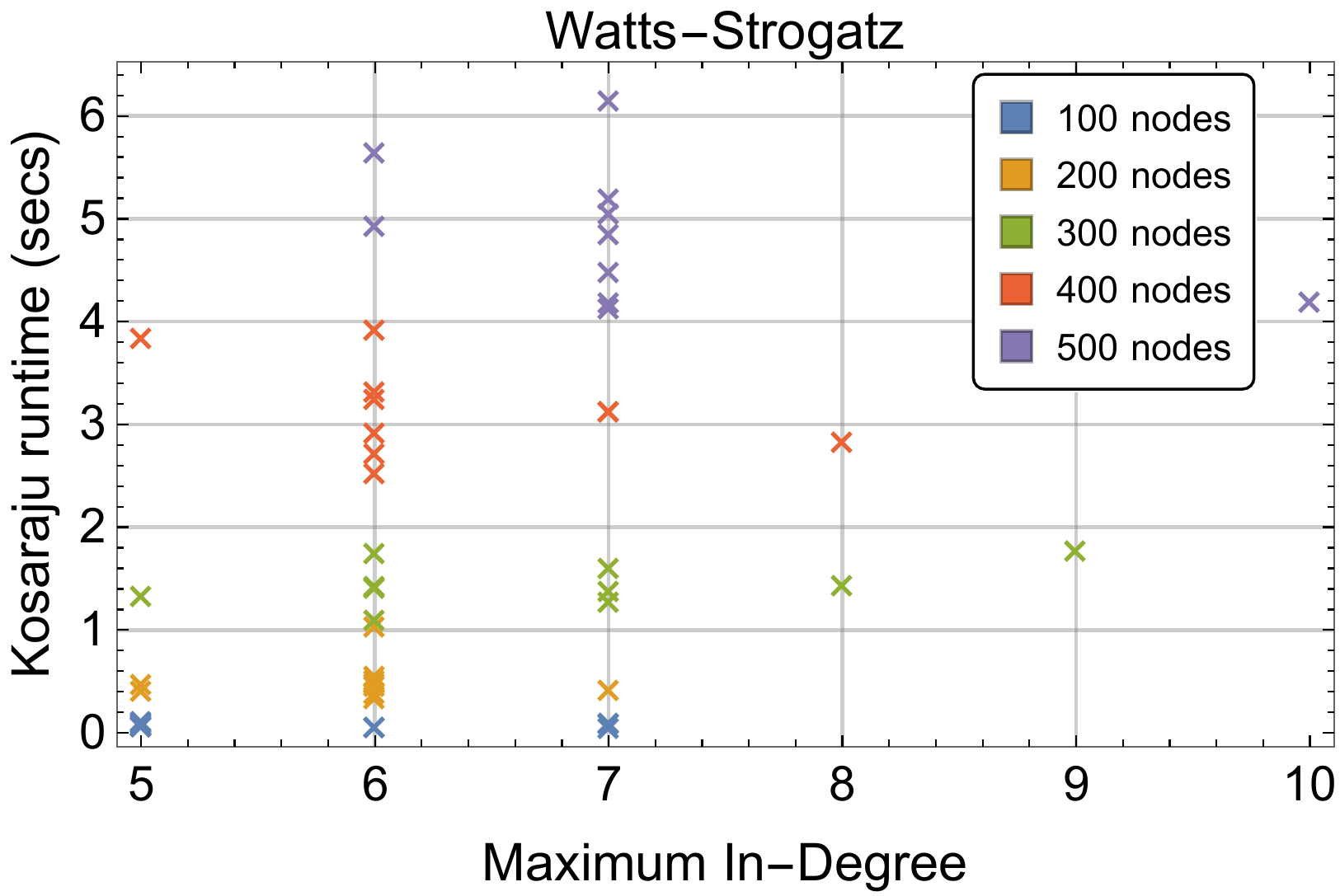}
    }
    \subfigure[]{
    \includegraphics[width=.46\linewidth]{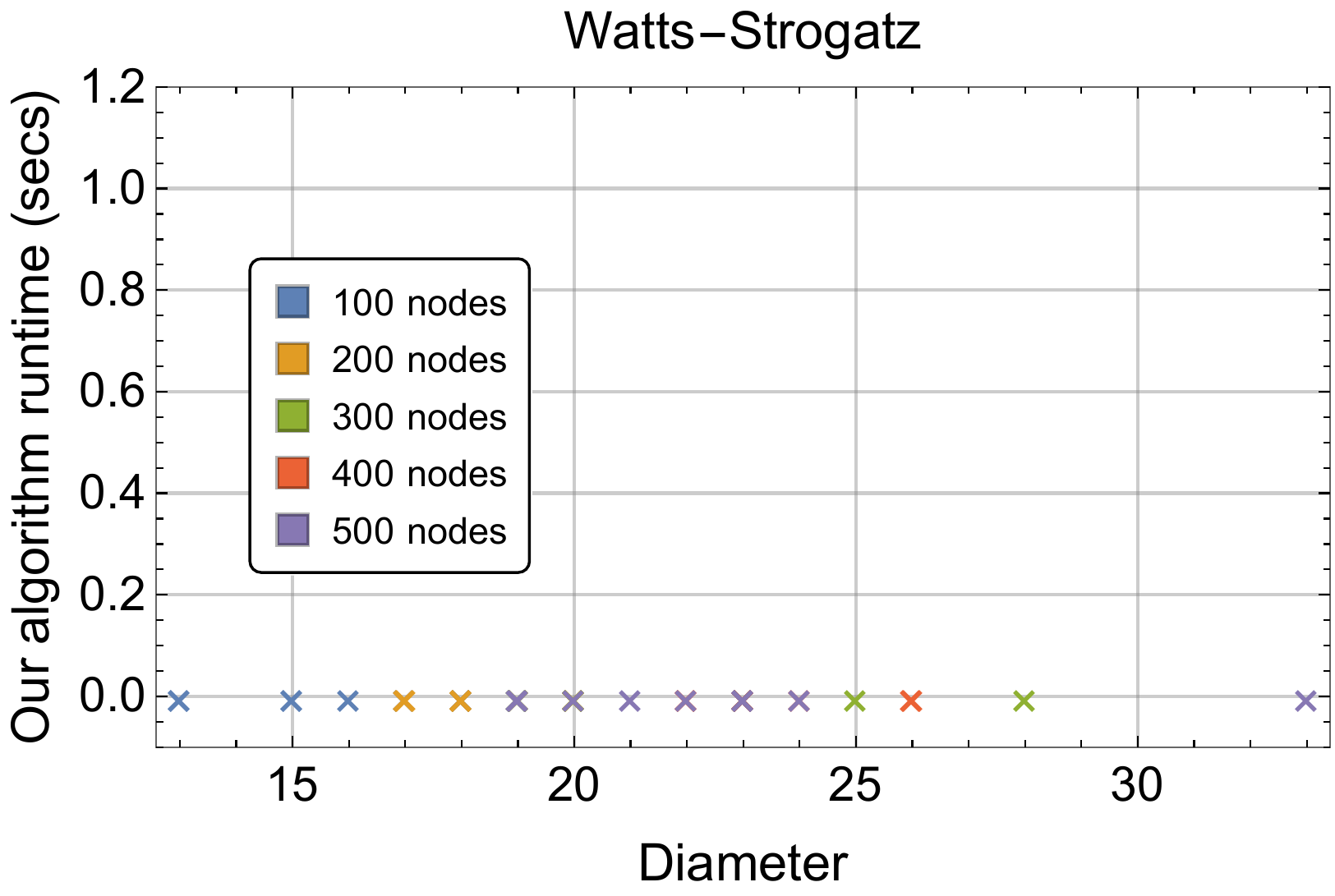}
    }
    \subfigure[]{
    \includegraphics[width=.46\linewidth]{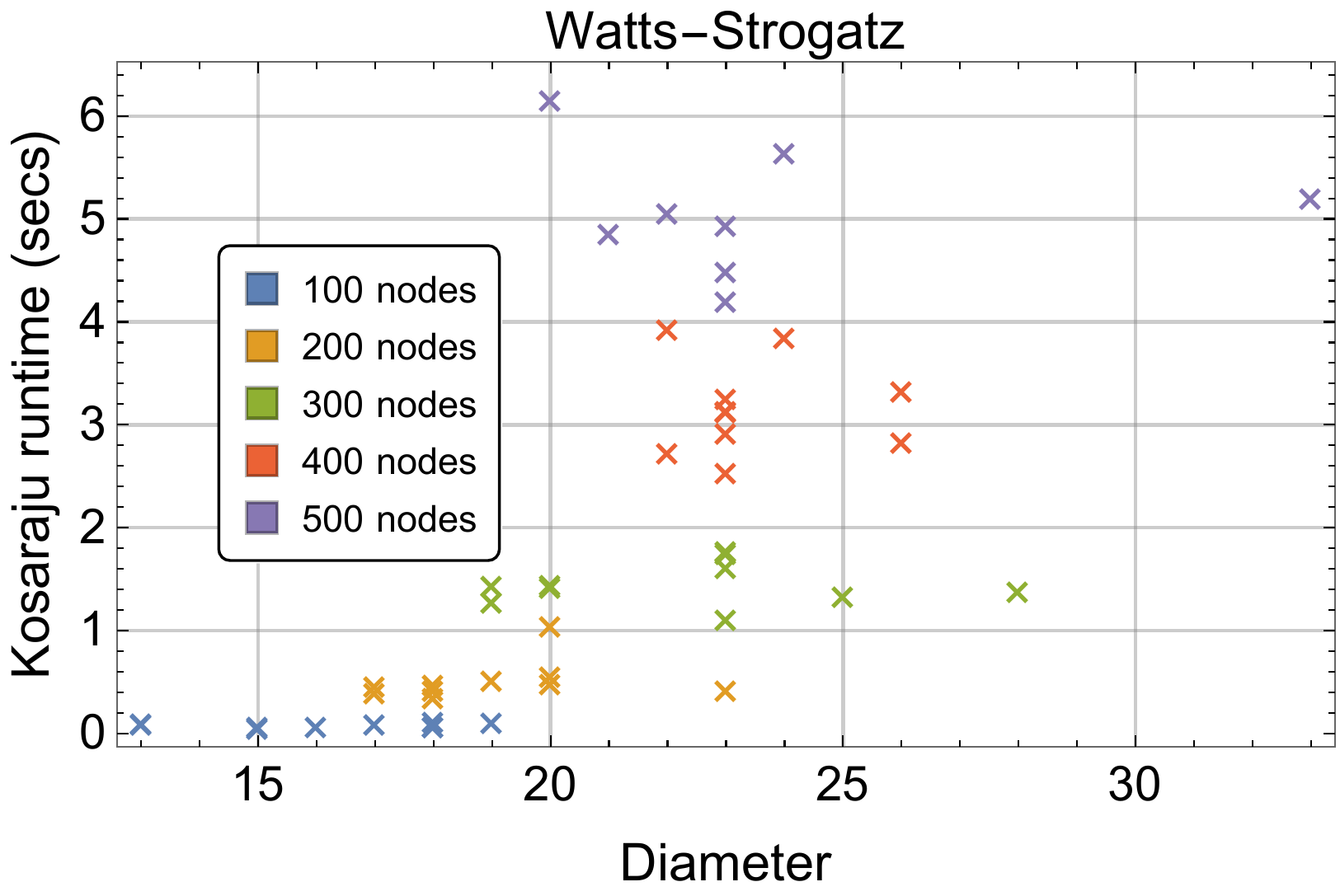}
    }
    \subfigure[]{
    \includegraphics[width=.46\linewidth]{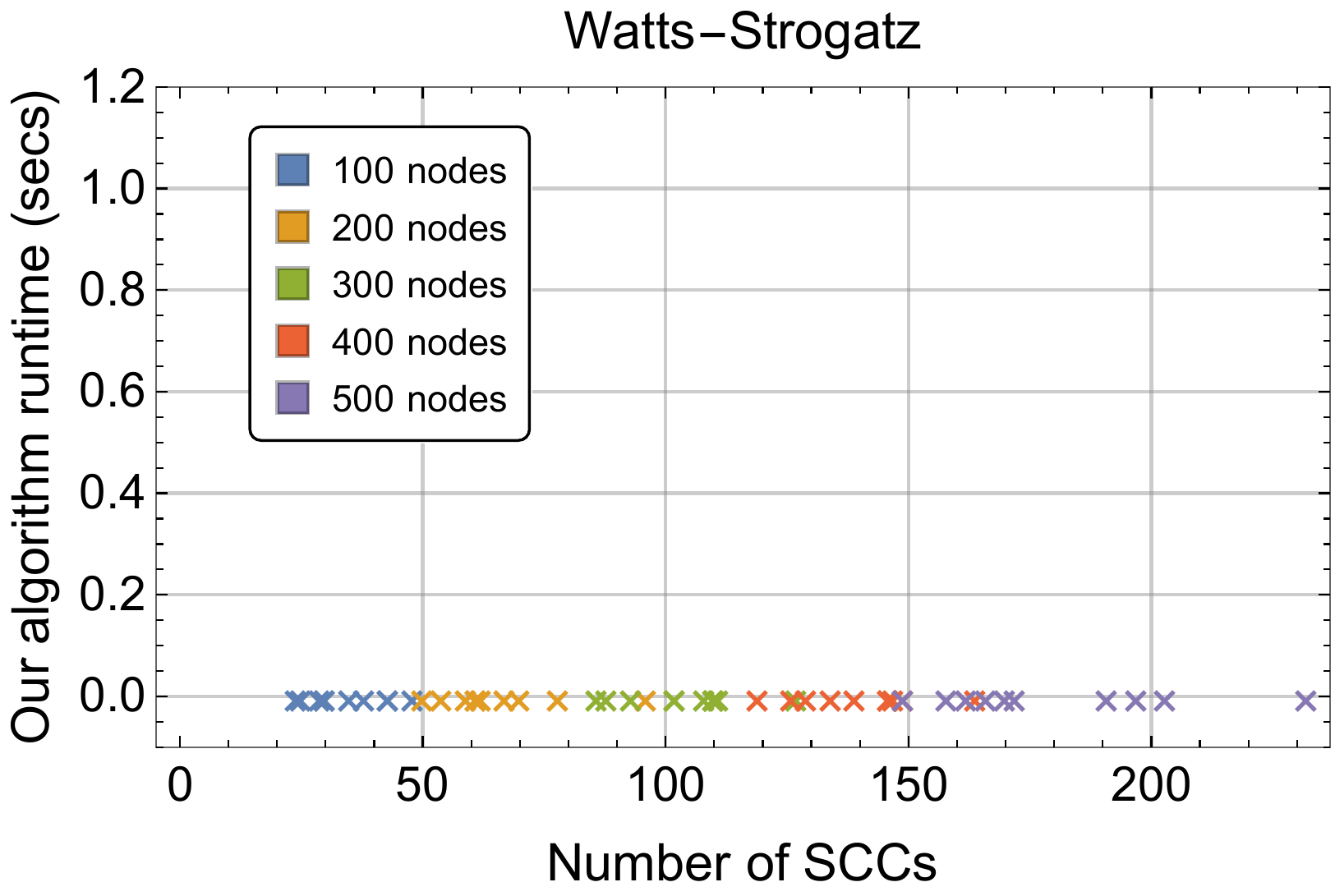}
    }
    \subfigure[]{
    \includegraphics[width=.46\linewidth]{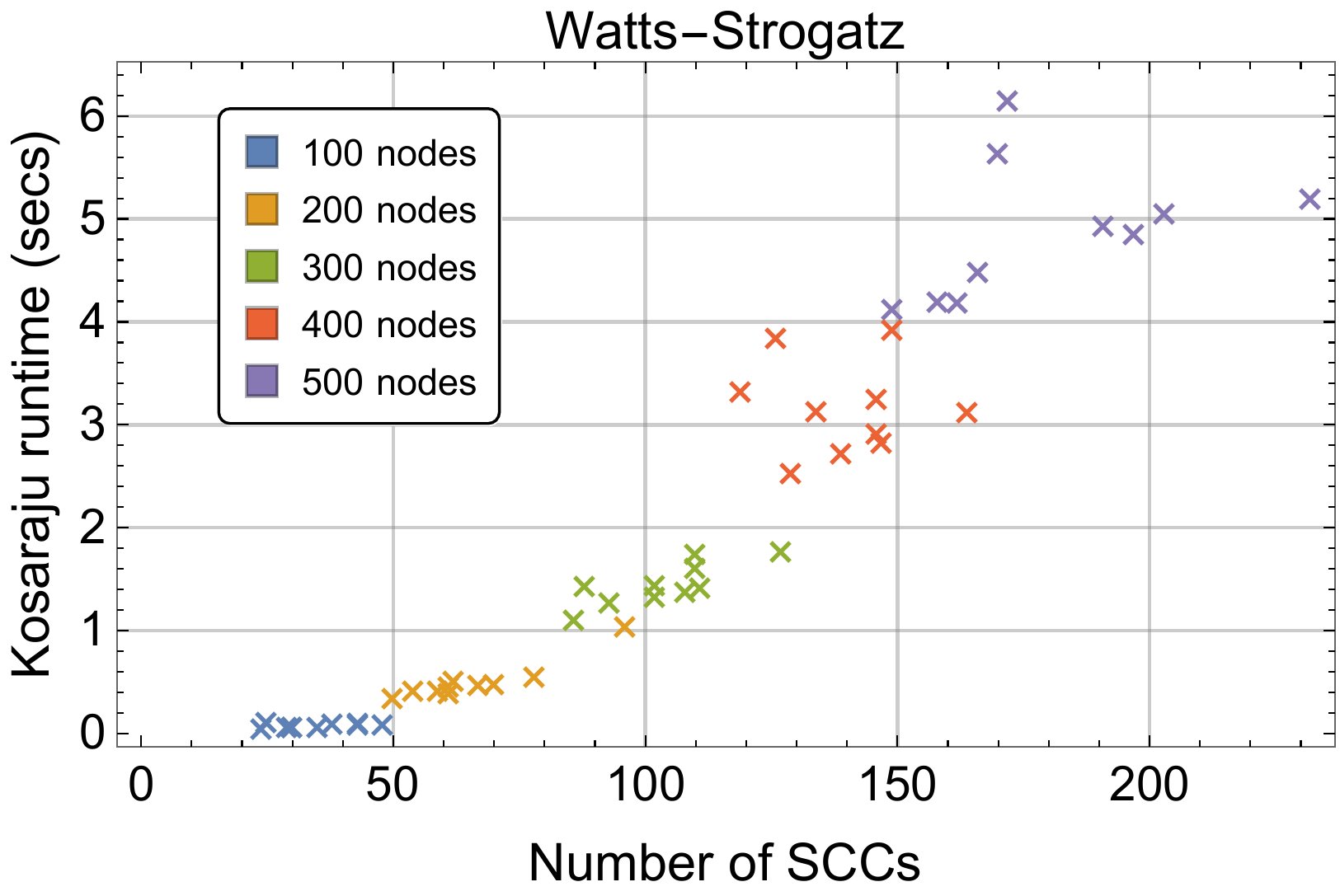}
    }
    \caption{These figures show the relationship between the network properties of some randomly generated Watts-Strogatz networks and their run times for both our proposed algorithm and Kosaraju's algorithm.}
    \label{fig:WSparam1}
\end{figure}

\begin{figure}[H]
    \centering
    \includegraphics[width =0.46 \linewidth]{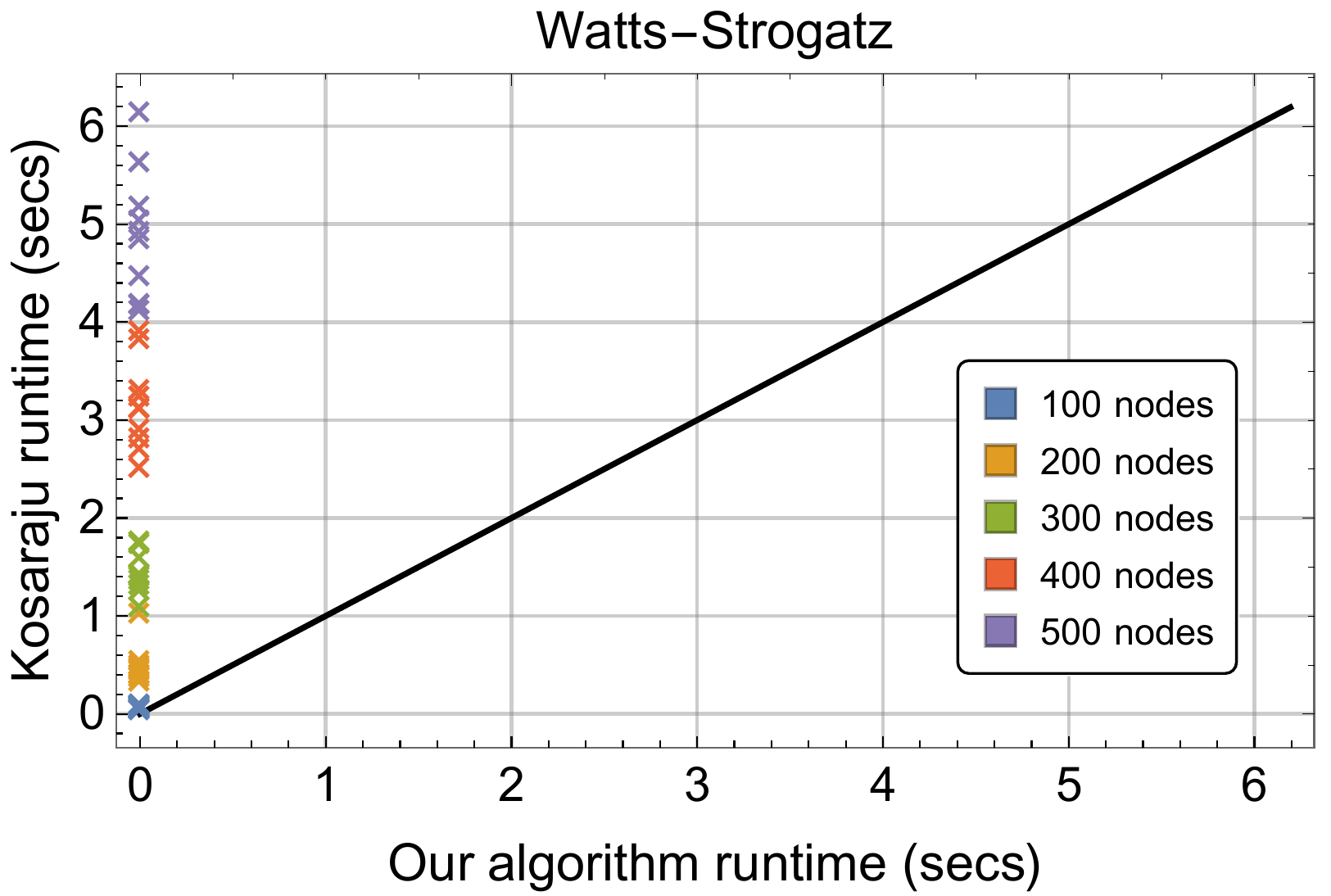}
    \caption{This figure compares the run times of  both our proposed algorithm and Kosaraju's algorithm for several randomly generated Watts-Strogatz networks.}
    \label{fig:WSruntimesparam1}
\end{figure}

\begin{figure}[H]
    \centering
    \subfigure[]{
    \includegraphics[width=.46\linewidth]{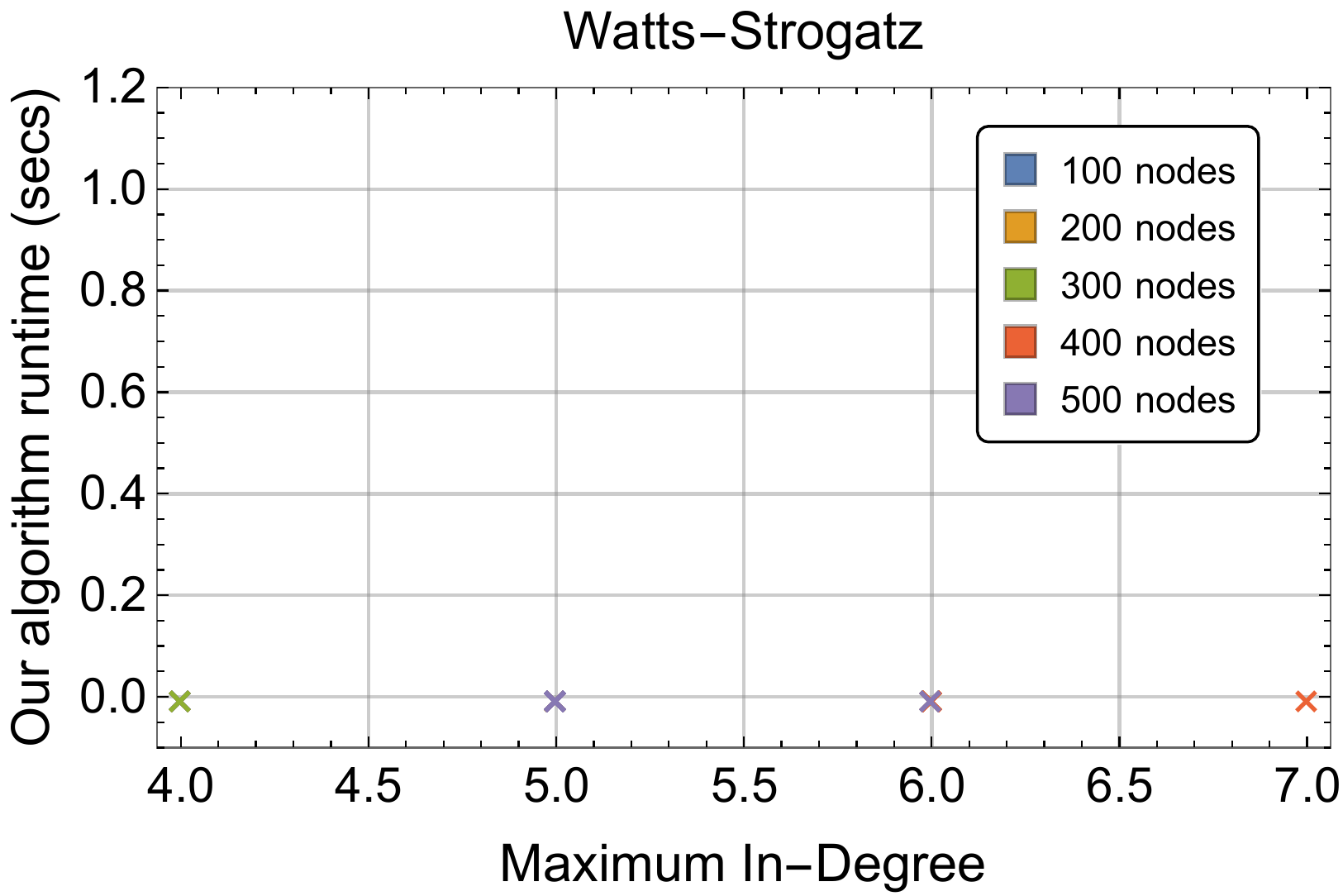}
    }
    \subfigure[]{
    \includegraphics[width=.46\linewidth]{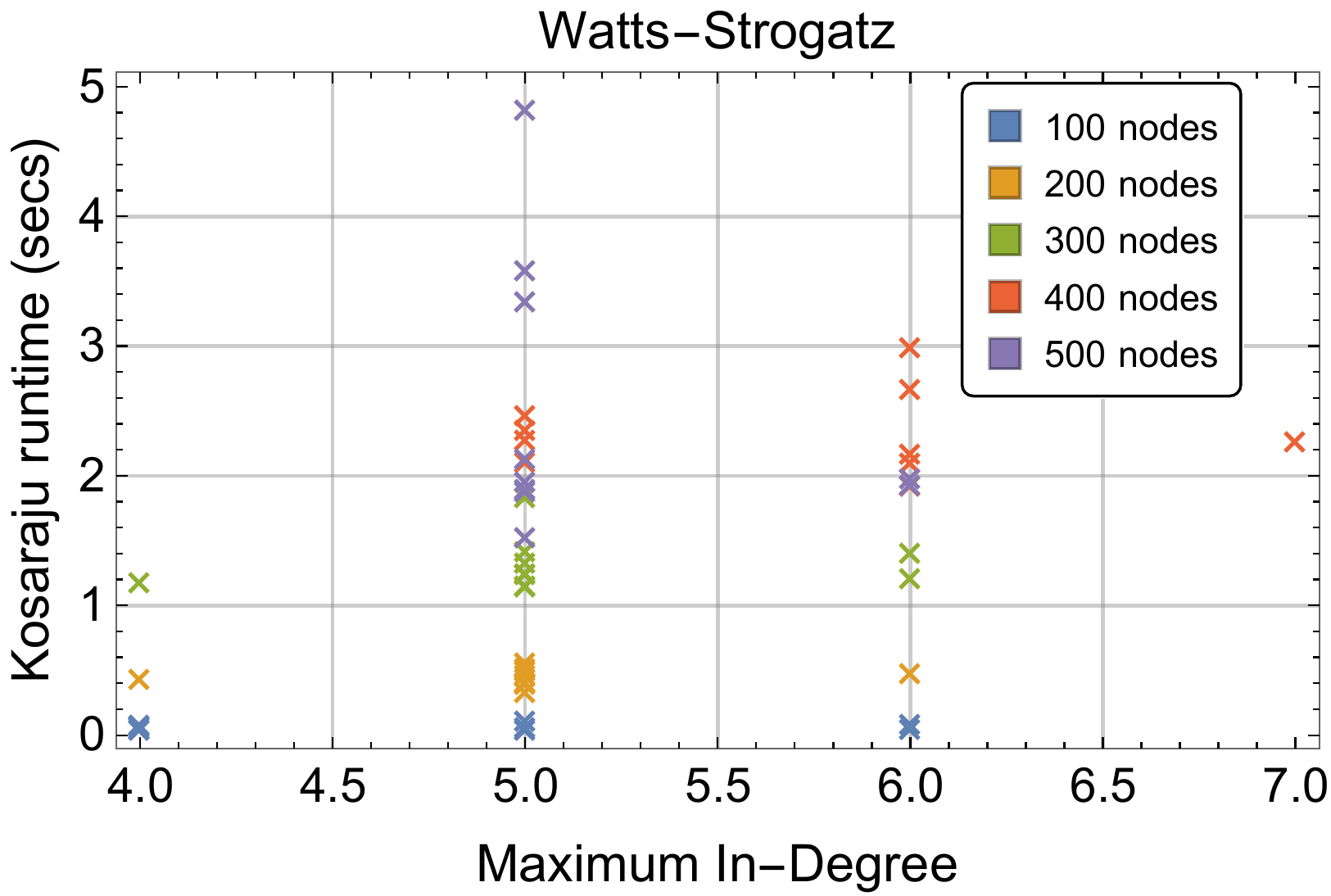}
    }
    \subfigure[]{
    \includegraphics[width=.46\linewidth]{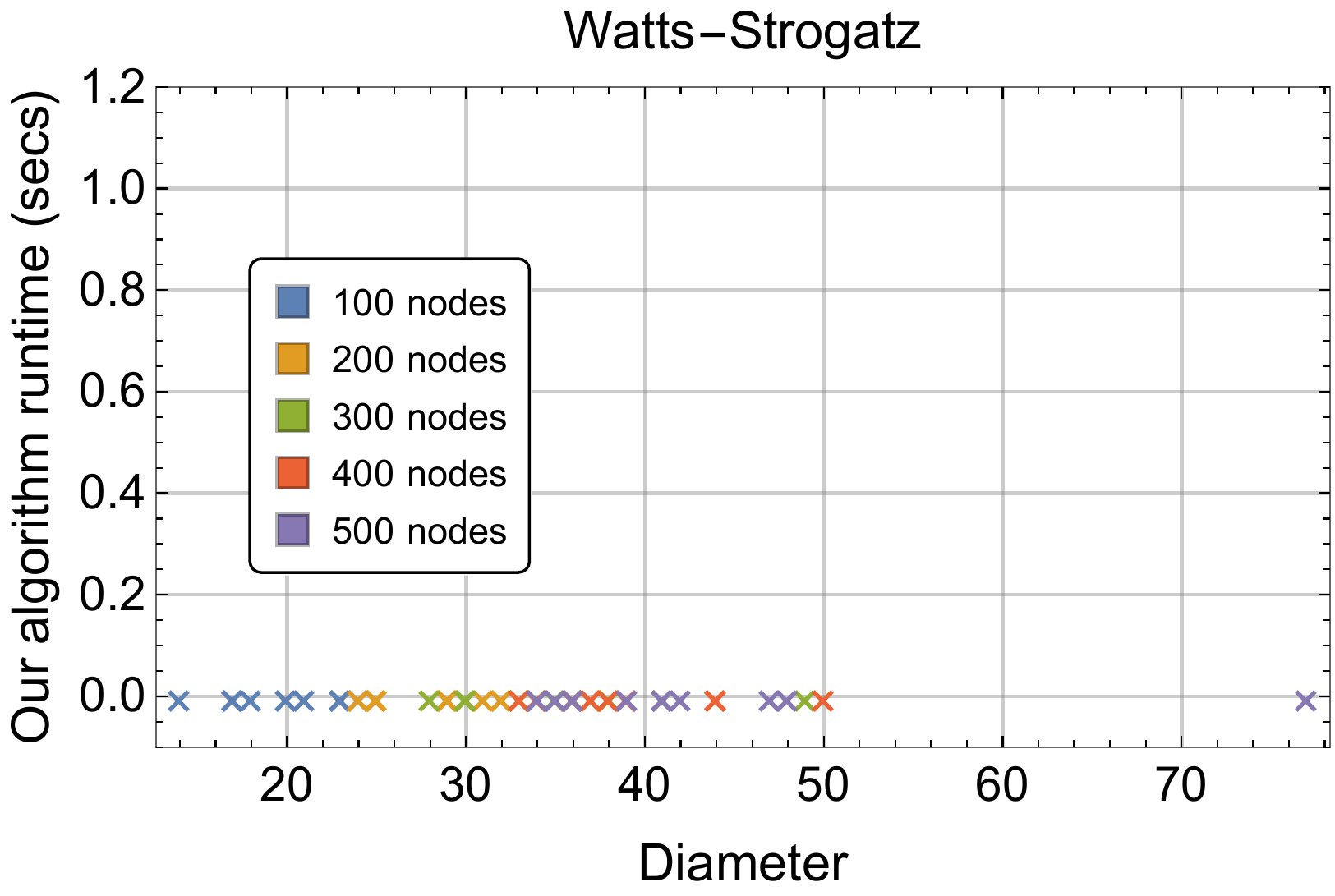}
    }
    \subfigure[]{
    \includegraphics[width=.46\linewidth]{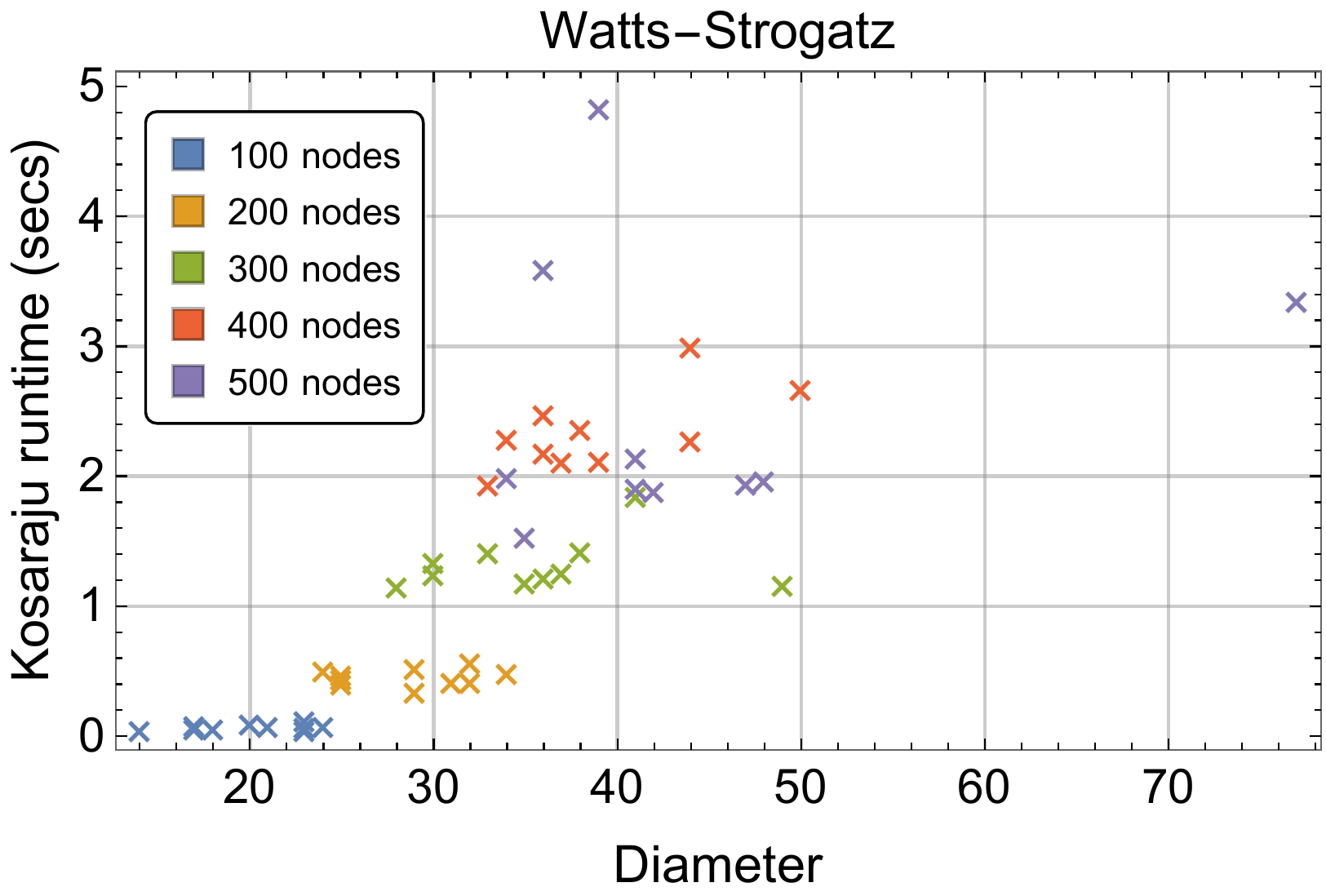}
    }
    \subfigure[]{
    \includegraphics[width=.46\linewidth]{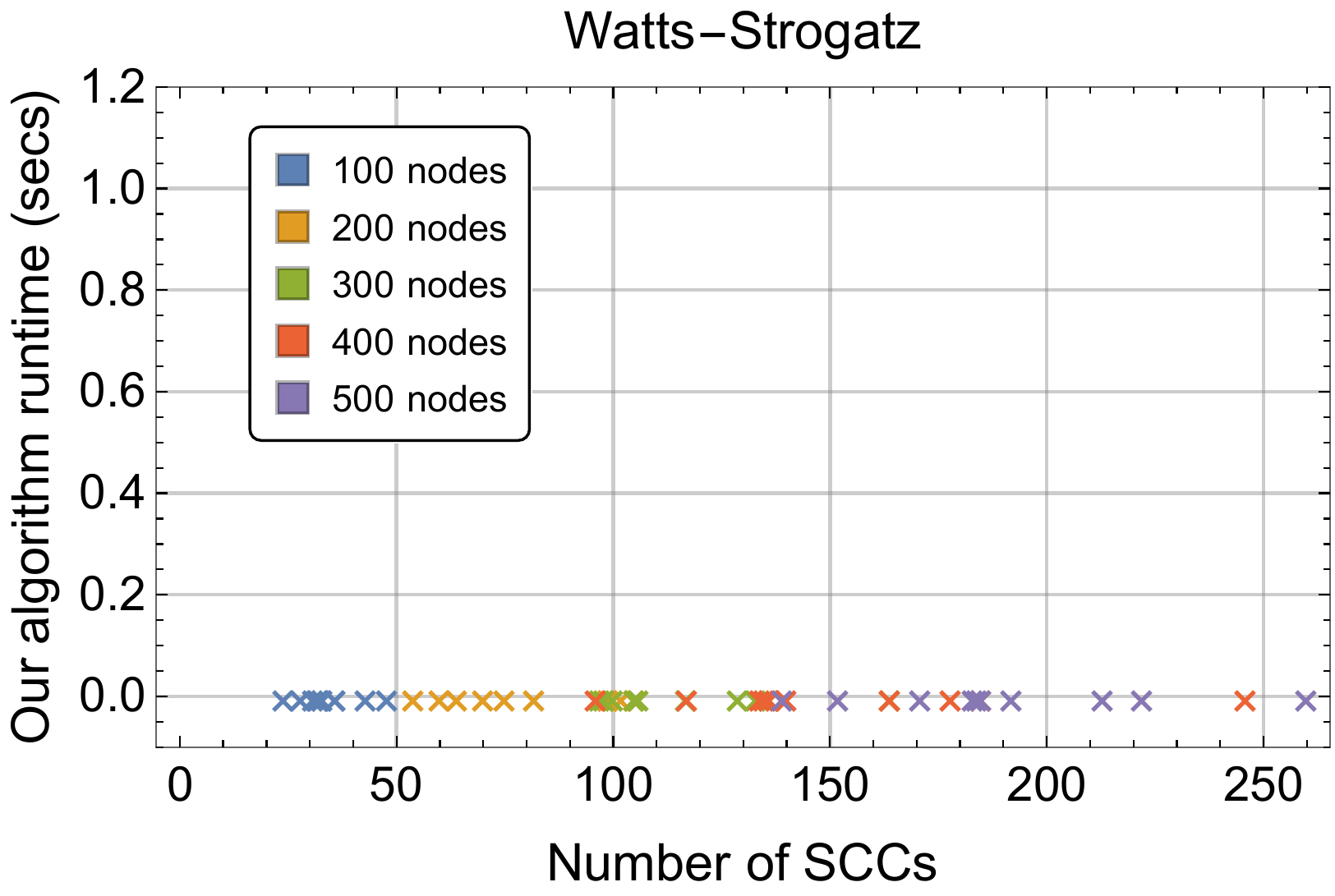}
    }
    \subfigure[]{
    \includegraphics[width=.46\linewidth]{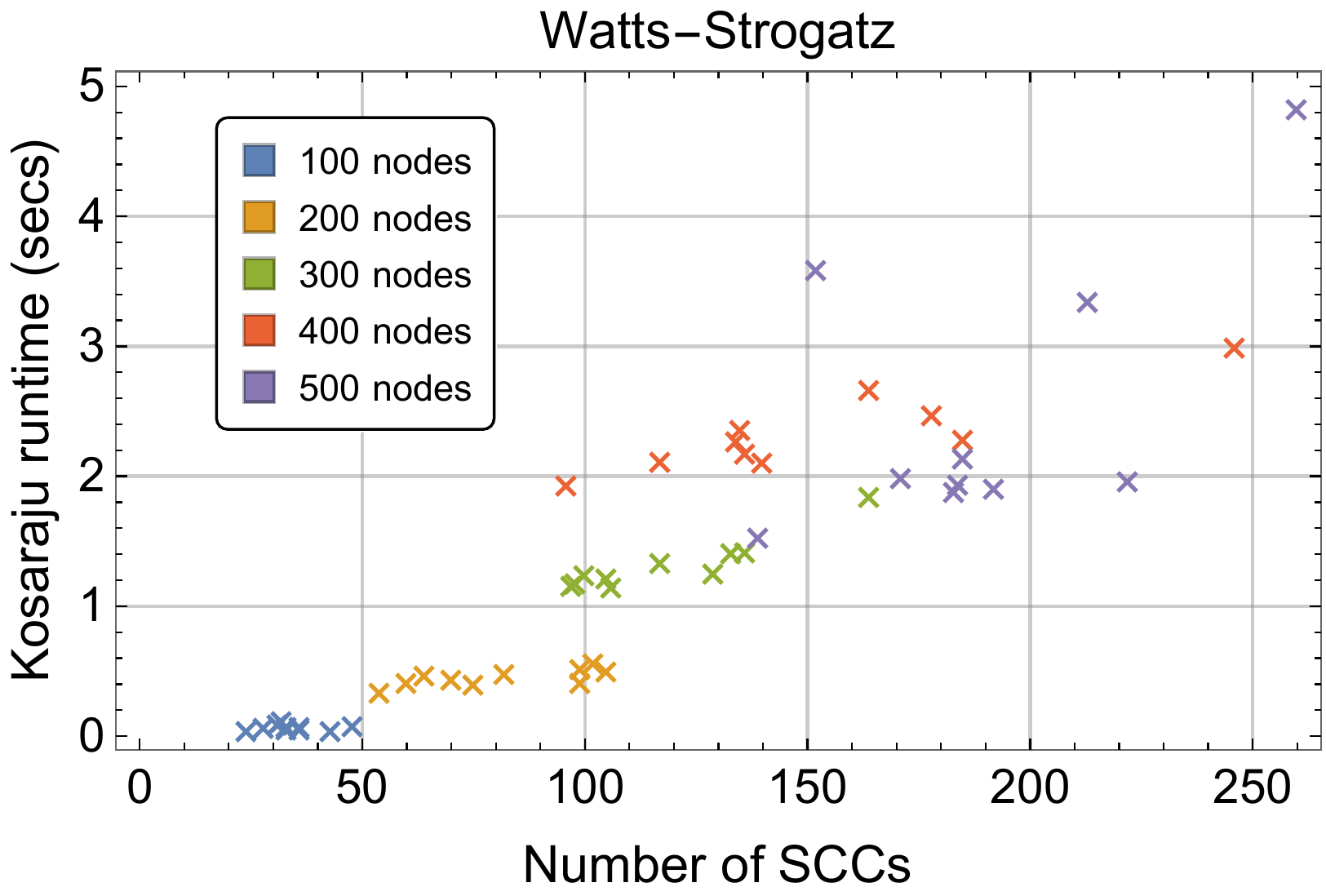}
    }
    \caption{These figures show the relationship between the network properties of some randomly generated Watts-Strogatz networks and their run times for both our proposed algorithm and Kosaraju's algorithm.}
    \label{fig:WSparam2}
\end{figure}

\begin{figure}[H]
    \centering
    \includegraphics[width =0.46 \linewidth]{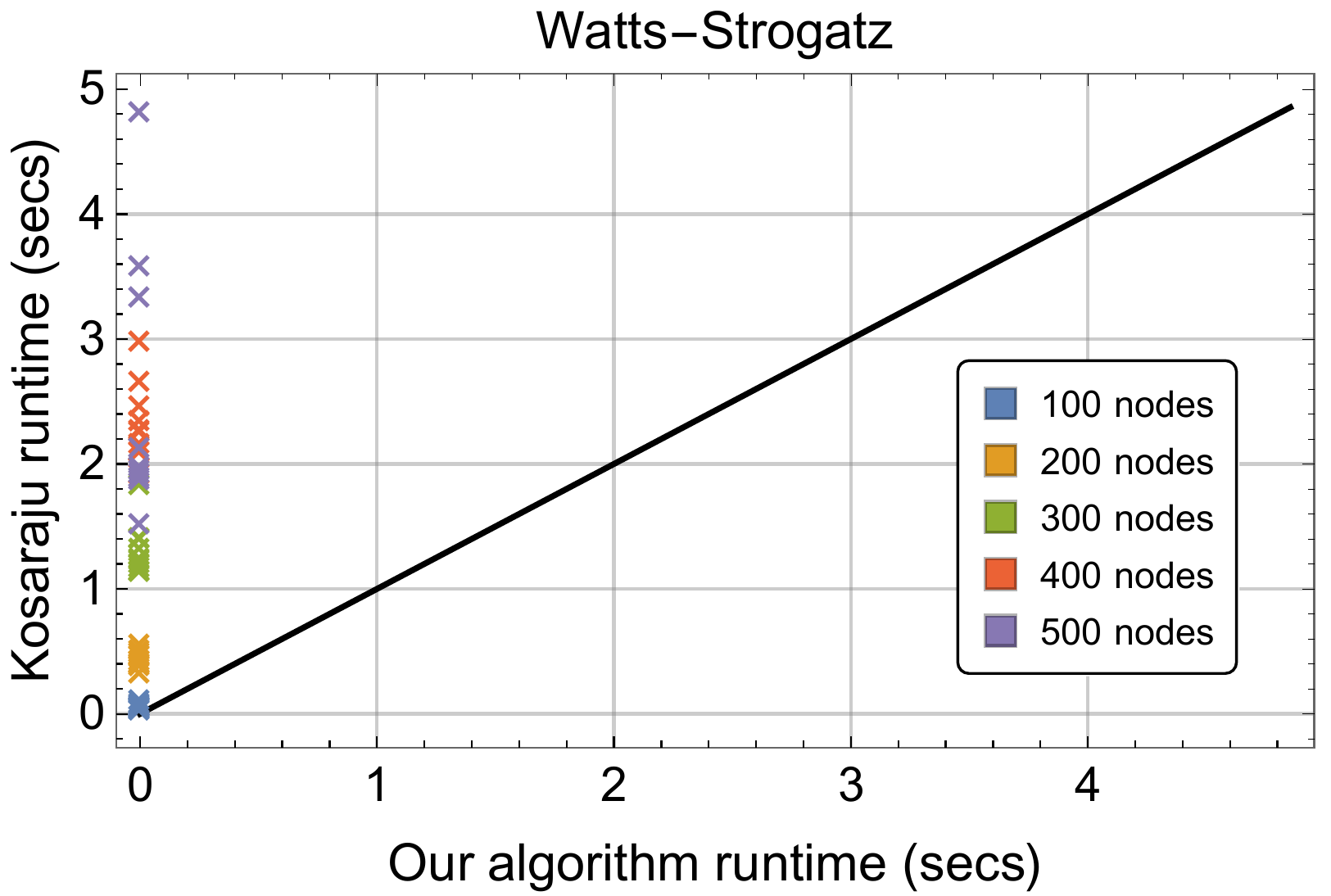}
    \caption{This figure compares the run times of  both our proposed algorithm and Kosaraju's algorithm for several randomly generated Watts-Strogatz networks.}
    \label{fig:WSruntimesparam2}
\end{figure}
In Figure \ref{fig:WSparam1}, we see that the diameter dominates the complexity, so the distributed algorithm was used.  

In Figure \ref{fig:WSruntimesparam1}, we see the comparison between the run times of both our algorithm and Kosaraju's algorithm on the different randomly generated Watts-Strogatz networks. Our distributed algorithm outperforms Kosaraju's. 

The results from the second set of parameters for Watts-Strogatz networks are shown in Figures \ref{fig:WSparam2} and \ref{fig:WSruntimesparam2}. The results from the two sets of parameters do not present much difference. It is clear that our algorithm outperforms the Kosaraju algorithm.

\subsection{Determining the Diameter of a Network}
\begin{figure}[H]
    \centering
    \subfigure[]{
    \includegraphics[width =0.46 \linewidth]{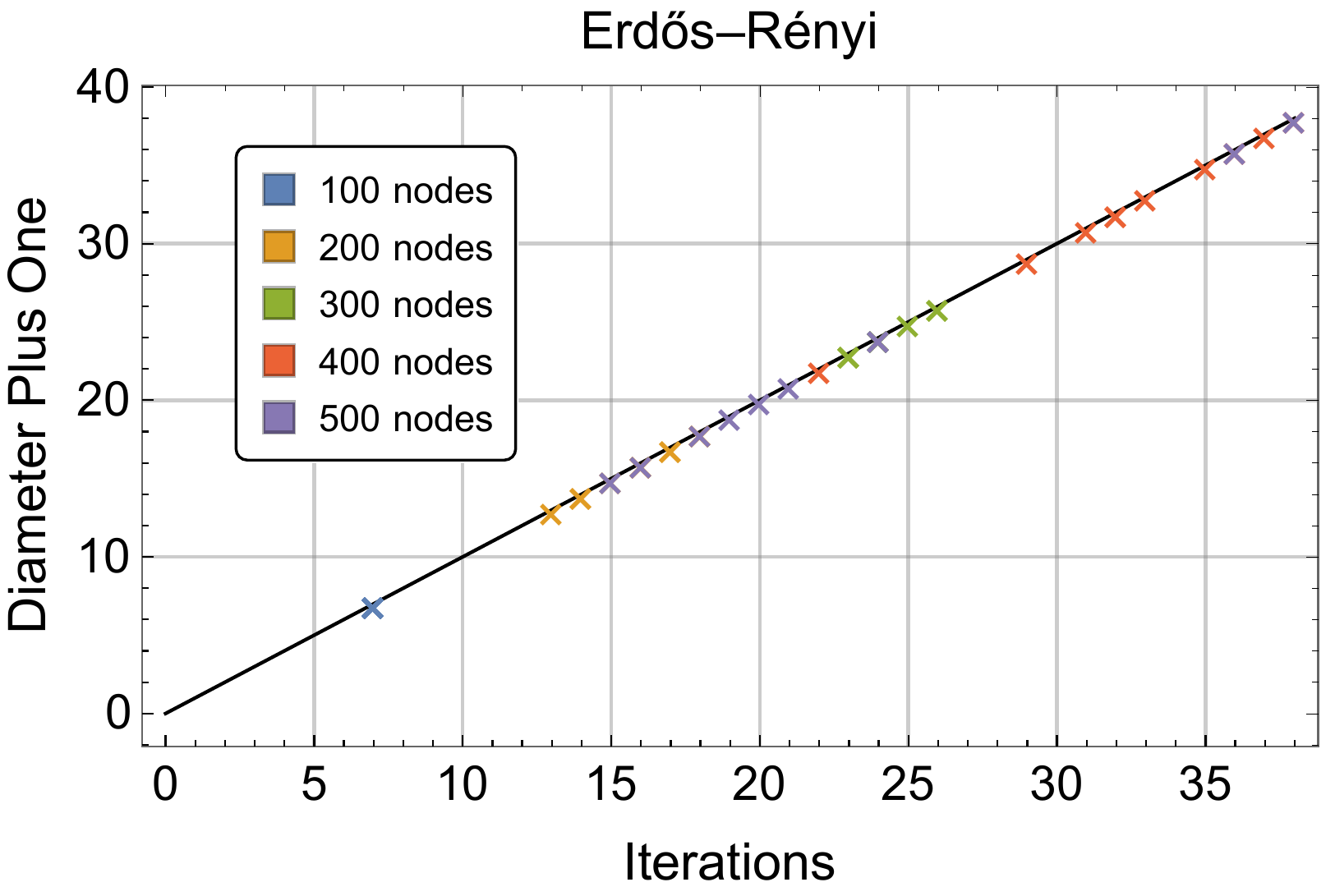}
    }
    \subfigure[]{
    \includegraphics[width =0.46 \linewidth]{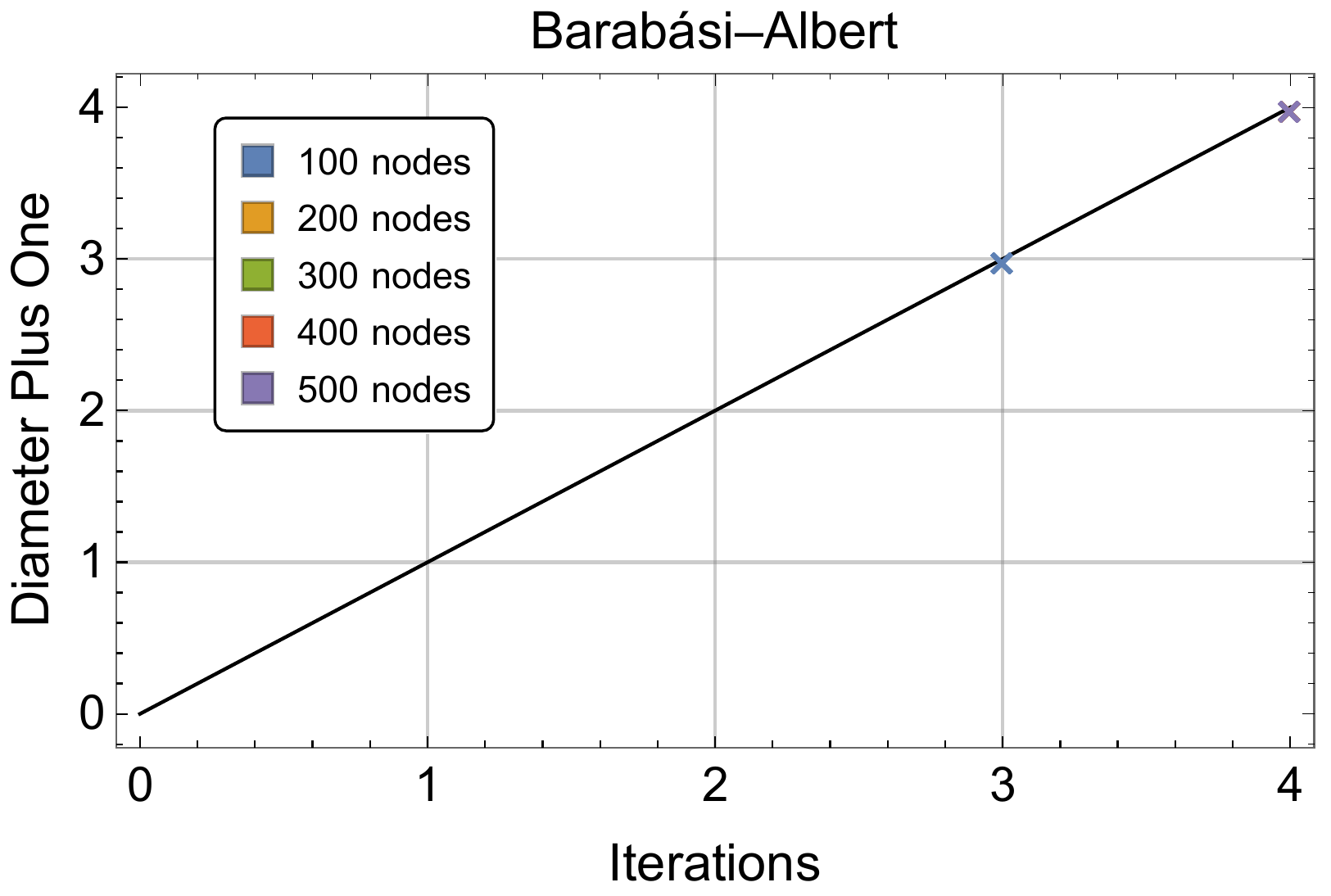}
    }
    \subfigure[]{
    \includegraphics[width =0.46 \linewidth]{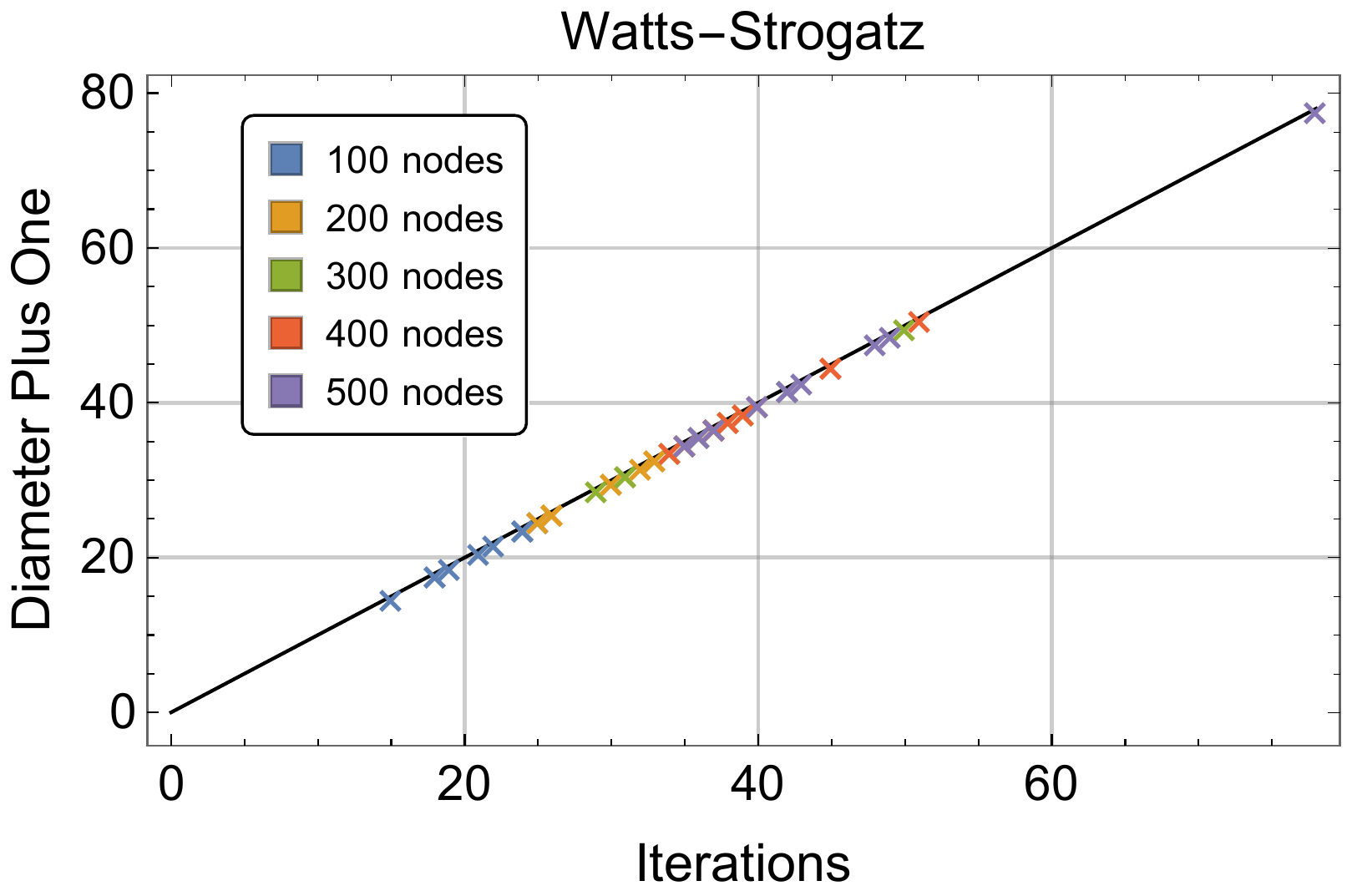}
    }
    \caption{This figure shows the relationship between the number of iterations needed before terminating our algorithm and the diameter plus one of several randomly generated networks.}
    \label{fig:diameter}
\end{figure}

From the results of Theorem~\ref{theorem:diameter}, our algorithm can determine the finite digraph diameter of the network. Here, we illustrate the relationship between the number of iterations required before terminating our algorithm compared with the finite digraph diameter plus one.

Figure \ref{fig:diameter} shows the results from running our algorithm on the random networks using the second set of parameters. We see that the number of required iterations is identical to the diameter of the network plus one. 

Finally, we compared the runtime of our algorithm with the Floyd-Warshall algorithm on the Erdős-Rényi, Barabási-Albert, and Watts-Strogatz networks. We randomly generated ten different Erdős-Rényi networks, using 25 nodes and 50 edges. For the Barabási-Albert network, we used 25 nodes and 3 edges added to each new vertex at each time step to generate ten different networks. Finally, we generated ten different Watts-Strogatz networks using 25 nodes and a linkage probability of 0.2. In Figure \ref{fig:comparediam}, we see that our algorithm outperforms the Floyd-Warshall algorithm for all networks.

\begin{figure}[H]
    \centering
    \includegraphics[width =0.46 \linewidth]{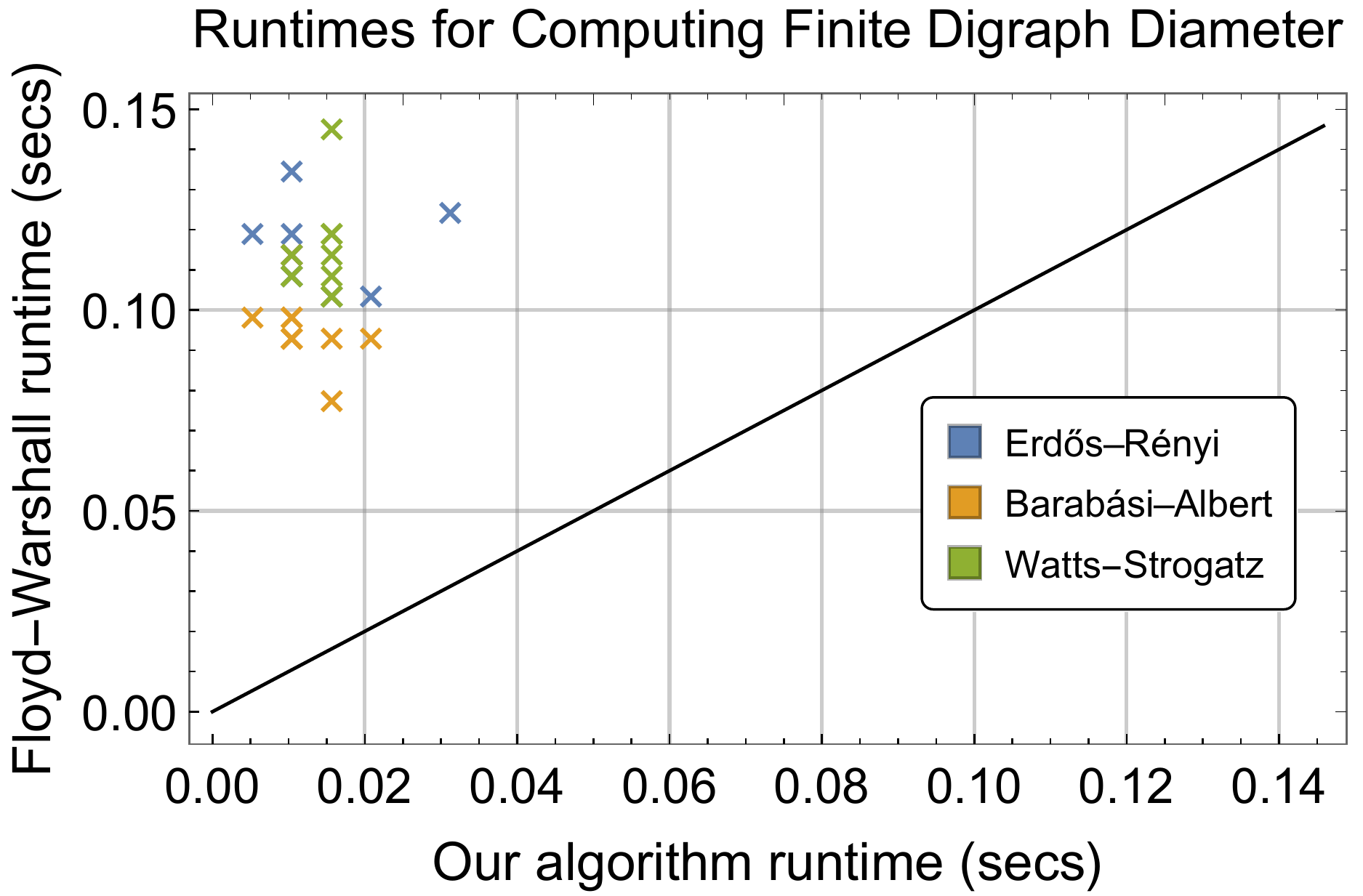}
    \caption{This figure compares the runtimes of computing the finite digraph diameter when using our algorithm against the Floyd-Warshall algorithm on several randomly generated networks, including Erdős-Rényi, Barabási-Albert, and Watts-Strogatz.}
    \label{fig:comparediam}
\end{figure}

\section{Conclusions}\label{sec:conc}

We provided a scalable and distributed algorithm to find the strongly connected components of a directed network with time-complexity $\mathcal O\left(Dd_{\text{in-degree}}^{\max}\right)$, where $D$ is the finite diameter and $d_{\text{in-degree}}^{\max}$ is the maximum in-degree of the network. Furthermore, our algorithm can be used to calculate the finite diameter of a directed network and outperforms the current state-of-the art, e.g., the Floyd-Warshall algorithm. We demonstrated the performance of our algorithm on several random networks. We compared the runtime of our algorithm against Kosaraju's algorithm and found that our algorithm outperformed Kosaraju's in nearly every tested network. Similar results readily follow regarding the computation of the finite digraph diameter on different random networks.

\vspace{-0.2cm}
{\small
\bibliographystyle{IEEEtran}
\bibliography{IEEEabrv,references}

\begin{thebibliography}{10}
\providecommand{\url}[1]{#1}
\csname url@samestyle\endcsname
\providecommand{\newblock}{\relax}
\providecommand{\bibinfo}[2]{#2}
\providecommand{\BIBentrySTDinterwordspacing}{\spaceskip=0pt\relax}
\providecommand{\BIBentryALTinterwordstretchfactor}{4}
\providecommand{\BIBentryALTinterwordspacing}{\spaceskip=\fontdimen2\font plus
\BIBentryALTinterwordstretchfactor\fontdimen3\font minus
  \fontdimen4\font\relax}
\providecommand{\BIBforeignlanguage}[2]{{%
\expandafter\ifx\csname l@#1\endcsname\relax
\typeout{** WARNING: IEEEtran.bst: No hyphenation pattern has been}%
\typeout{** loaded for the language `#1'. Using the pattern for}%
\typeout{** the default language instead.}%
\else
\language=\csname l@#1\endcsname
\fi
#2}}
\providecommand{\BIBdecl}{\relax}
\BIBdecl

\bibitem{ramnath2004dynamic}
S.~Ramnath, ``Dynamic digraph connectivity hastens minimum sum-of-diameters
  clustering,'' \emph{SIAM Journal on Discrete Mathematics}, vol.~18, no.~2,
  pp. 272--286, 2004.

\bibitem{even1975complexity}
S.~Even, A.~Itai, and A.~Shamir, ``On the complexity of time table and
  multi-commodity flow problems,'' in \emph{16th Annual Symposium on
  Foundations of Computer Science}.\hskip 1em plus 0.5em minus 0.4em\relax
  IEEE, 1975, pp. 184--193.

\bibitem{poon1998polynomial}
C.~K. Poon, Z.~Binhai, and C.~Francis, ``A polynomial time solution for
  labeling a rectilinear map,'' \emph{Information Processing Letters}, vol.~65,
  no.~4, pp. 201--207, 1998.

\bibitem{ramos2020structural}
G.~Ramos, A.~P. Aguiar, and S.~Pequito, ``Structural systems theory: an
  overview of the last 15 years,'' 2020.

\bibitem{bullo2009distributed}
F.~Bullo, J.~Cort{\'e}s, and S.~Martinez, \emph{Distributed control of robotic
  networks: a mathematical approach to motion coordination algorithms}.\hskip
  1em plus 0.5em minus 0.4em\relax Princeton University Press, 2009, vol.~27.

\bibitem{albert1999diameter}
R.~Albert, H.~Jeong, and A.-L. Barab{\'a}si, ``Diameter of the world-wide
  web,'' \emph{Nature}, vol. 401, no. 6749, pp. 130--131, 1999.

\bibitem{xue2017reliable}
Y.~Xue and P.~Bogdan, ``Reliable multi-fractal characterization of weighted
  complex networks: algorithms and implications,'' \emph{Scientific Reports},
  vol.~7, no.~1, pp. 1--22, 2017.

\bibitem{zongxiang2004cascading}
L.~Zongxiang, M.~Zhongwei, and Z.~Shuangxi, ``Cascading failure analysis of
  bulk power system using small-world network model,'' in \emph{2004
  International Conference on Probabilistic Methods Applied to Power
  Systems}.\hskip 1em plus 0.5em minus 0.4em\relax IEEE, 2004, pp. 635--640.

\bibitem{kuhl1980distributed}
J.~G. Kuhl and S.~M. Reddy, ``Distributed fault-tolerance for large
  multiprocessor systems,'' in \emph{Proceedings of the 7th annual symposium on
  Computer Architecture}, 1980, pp. 23--30.

\bibitem{tarjan1972depth}
R.~Tarjan, ``Depth-first search and linear graph algorithms,'' \emph{SIAM
  Journal on Computing}, vol.~1, no.~2, pp. 146--160, 1972.

\bibitem{dijkstra1976discipline}
E.~W. Dijkstra, \emph{A discipline of programming}.\hskip 1em plus 0.5em minus
  0.4em\relax Prentice Hall Englewood Cliffs, 1976, vol. 613924118.

\bibitem{sharir1981strong}
M.~Sharir, ``A strong-connectivity algorithm and its applications in data flow
  analysis,'' \emph{Computers \& Mathematics with Applications}, vol.~7, no.~1,
  pp. 67--72, 1981.

\bibitem{hsu2017comparative}
D.~F. Hsu, X.~Lan, G.~Miller, and D.~Baird, ``A comparative study of algorithm
  for computing strongly connected components,'' in \emph{2017 IEEE 15th
  International Conference on Dependable, Autonomic, and Secure
  Computing}.\hskip 1em plus 0.5em minus 0.4em\relax IEEE, 2017, pp. 431--437.

\bibitem{fleischer2000identifying}
L.~K. Fleischer, B.~Hendrickson, and A.~Pinar, ``On identifying strongly
  connected components in parallel,'' in \emph{International Parallel and
  Distributed Processing Symposium}.\hskip 1em plus 0.5em minus 0.4em\relax
  Springer, 2000, pp. 505--511.

\bibitem{barnat2011distributed}
J.~Barnat, J.~Chaloupka, and J.~Van De~Pol, ``Distributed algorithms for
  strongly connected component decomposition,'' \emph{Journal of Logic and
  Computation}, vol.~21, no.~1, pp. 23--44, 2011.

\bibitem{floydwarshall}
R.~W. Floyd, ``Algorithm 97: Shortest path,'' \emph{Communications of the ACM},
  vol.~5, no.~6, p. 345, Jun. 1962.

\bibitem{cortes2008distributed}
J.~Cort{\'e}s, ``Distributed algorithms for reaching consensus on general
  functions,'' \emph{Automatica}, vol.~44, no.~3, pp. 726--737, 2008.

\bibitem{avgpathlengthER}
A.~Fronczak, P.~Fronczak, and J.~A. Ho\l{}yst, ``Average path length in random
  networks,'' \emph{Physics Review E}, vol.~70, p. 056110, Nov 2004.

\bibitem{watts1998collective}
D.~J. Watts and S.~H. Strogatz, ``Collective dynamics of
  ‘small-world’networks,'' \emph{Nature}, vol. 393, no. 6684, pp. 440--442,
  1998.

\end{thebibliography}
}

\end{document}